\documentclass[reqno]{amsart}

\usepackage[all]{xy}
\usepackage{graphicx}

\usepackage{amssymb}
\usepackage{amsmath}
\usepackage{mathrsfs}
\usepackage{epsfig}
\usepackage{amscd}
\usepackage{graphicx, color}

\def\E{\ifmmode{\mathbb E}\else{$\mathbb E$}\fi} 
\def\N{\ifmmode{\mathbb N}\else{$\mathbb N$}\fi} 
\def\R{\ifmmode{\mathbb R}\else{$\mathbb R$}\fi} 
\def\Q{\ifmmode{\mathbb Q}\else{$\mathbb Q$}\fi} 
\def\C{\ifmmode{\mathbb C}\else{$\mathbb C$}\fi} 
\def\H{\ifmmode{\mathbb H}\else{$\mathbb H$}\fi} 
\def\Z{\ifmmode{\mathbb Z}\else{$\mathbb Z$}\fi} 
\def\P{\ifmmode{\mathbb P}\else{$\mathbb P$}\fi} 
\def\T{\ifmmode{\mathbb T}\else{$\mathbb T$}\fi} 
\def\SS{\ifmmode{\mathbb S}\else{$\mathbb S$}\fi} 
\def\DD{\ifmmode{\mathbb D}\else{$\mathbb D$}\fi} 

\newcommand{\e}{\varepsilon}

\newcommand{\del}{\partial}

\newcommand{\ben}{\begin{enumerate}}
\newcommand{\een}{\end{enumerate}}
\newcommand{\be}{\begin{equation}}
\newcommand{\ee}{\end{equation}}
\newcommand{\bea}{\begin{eqnarray}}
\newcommand{\eea}{\end{eqnarray}}
\newcommand{\beastar}{\begin{eqnarray*}}
\newcommand{\eeastar}{\end{eqnarray*}}
\newcommand{\bc}{\begin{center}}
\newcommand{\ec}{\end{center}}

\theoremstyle{theorem}
\newtheorem{thm}{Theorem}[section]
\newtheorem{cor}[thm]{Corollary}
\newtheorem{lem}[thm]{Lemma}
\newtheorem{prop}[thm]{Proposition}

\theoremstyle{definition}
\newtheorem{defn}[thm]{Definition}
\newtheorem{rem}[thm]{Remark}
\newtheorem{ques}[thm]{Question}

\newtheorem{hypo}[thm]{Hypothesis}

\newtheorem*{thm*}{Theorem}

\numberwithin{equation}{section}

\hsize=5.0truein \hoffset=.25truein \vsize=8.375truein
\voffset=.15truein
\def\R{{\mathbb R}}

\def\E{{\mathbb E}}
\def\Z{{\mathbb Z}}
\def\C{{\mathbb C}}
\def\R{{\mathbb R}}
\def\P{{\mathbb P}}

\def\N{{\mathbb N}}

\def\11{{\mathbb I}}

\def\delbar{{\overline \partial}}

\def\C{\mathbb{C}}
\def\Z{\mathbb{Z}}

\def\T{\mathbb{T}}

\def\Q{\mathbb{Q}}

\def\E{\ifmmode{\mathbb E}\else{$\mathbb E$}\fi} 
\def\N{\ifmmode{\mathbb N}\else{$\mathbb N$}\fi} 
\def\R{\ifmmode{\mathbb R}\else{$\mathbb R$}\fi} 
\def\Q{\ifmmode{\mathbb Q}\else{$\mathbb Q$}\fi} 
\def\C{\ifmmode{\mathbb C}\else{$\mathbb C$}\fi} 
\def\H{\ifmmode{\mathbb H}\else{$\mathbb H$}\fi} 
\def\Z{\ifmmode{\mathbb Z}\else{$\mathbb Z$}\fi} 
\def\P{\ifmmode{\mathbb P}\else{$\mathbb P$}\fi} 
\def\SS{\ifmmode{\mathbb S}\else{$\mathbb S$}\fi} 
\def\DD{\ifmmode{\mathbb D}\else{$\mathbb D$}\fi} 

\def\R{{\mathbb R}}

\def\E{{\mathbb E}}
\def\Z{{\mathbb Z}}
\def\C{{\mathbb C}}
\def\R{{\mathbb R}}

\def\N{{\mathbb N}}

\def\delbar{{\overline \partial}}


\def\e{\varepsilon}

\def\CH{{\mathcal H}}

\def\CJ{{\mathcal J}}

\def\CL{{\mathcal L}}

\def\CT{{\mathcal T}}

%

%



\def\darr#1{\raise1.5ex\hbox{$\leftrightarrow$}
\mkern-16.5mu #1}

\def\roughly#1{\raise.3ex\hbox{$#1$\kern-.75em
\lower1ex\hbox{$\sim$}}}

\def\opname#1{\mathop{\kern0pt{\rm #1}}\nolimits}

\def\Im{\opname{Im}}

\def\dist{\opname{dist}}

\def\pr{\prime}

\begin{document}

\quad \vskip1.375truein

\def\mq{\mathfrak{q}}
\def\mp{\mathfrak{p}}
\def\mH{\mathfrak{H}}
\def\mh{\mathfrak{h}}
\def\ma{\mathfrak{a}}
\def\ms{\mathfrak{s}}
\def\mm{\mathfrak{m}}
\def\mn{\mathfrak{n}}
\def\mz{\mathfrak{z}}
\def\mw{\mathfrak{w}}
\def\Hoch{{\tt Hoch}}
\def\mt{\mathfrak{t}}
\def\ml{\mathfrak{l}}
\def\mT{\mathfrak{T}}
\def\mL{\mathfrak{L}}
\def\mg{\mathfrak{g}}
\def\md{\mathfrak{d}}
\def\mr{\mathfrak{r}}

\def\hom{\textup{Hom}}
\def\aut{\textup{Aut}}
\def\diff{\textup{diff}}
\def\exp{\textup{exp}}
\def\id{\textup{id}}
\def\pr{\textup{pr}}
\def\doublearrow{\overrightarrow{\rightarrow}}
\def\bG{\bold{G}}
\def\bH{\bold{H}}
\def\bK{\bold{K}}
\def\bpsi{\bold{\Psi}}
\def\bphi{\bold{\Phi}}
\def\ric{\textup{Ric}}
\def\Int{\textup{Int}}
\def\loc{\textup{loc}}
\def\dist{\textup{dist}}

\title[Analysis of contact Cauchy--Riemann maps]{Analysis of Contact Cauchy--Riemann maps I: a priori $C^k$
estimates and asymptotic convergence}

\author{Yong-Geun Oh, Rui Wang}
\address{Center for Geometry and Physics, Institute for Basic Sciences (IBS), Pohang, Korea \&
Department of Mathematics, POSTECH, Pohang, Korea}
\email{yongoh1@postech.ac.kr}
\address{Department of Mathematics, University of California, Irvine, CA 92697}
\email{ruiw10@math.uci.edu}

\begin{abstract} In the present article, we develop tensorial analysis
for solutions $w$ of the following
nonlinear elliptic system
$$
\delbar^\pi w = 0, \, d(w^*\lambda \circ j) = 0,
$$
associated to a contact triad $(M,\lambda,J)$.
The novel aspect of this approach is that we work directly with this elliptic system on the contact manifold
without involving the symplectization process. In particular, when restricted to
the case where the one-form $w^*\lambda \circ j$ is exact, all a priori estimates for $w$-component can be written in terms
of the map $w$ itself without involving the coordinate from the symplectization.
We establish a priori $C^k$ coercive pointwise estimates for all $k \geq 2$
in terms of the energy density $\|dw\|^2$ by means of tensorial calculations on the
contact manifold itself. Further,  for any solution $w$ under the
finite $\pi$-energy assumption and the derivative bound, we also establish the asymptotic subsequence convergence to
`spiraling' instantons along the `rotating' Reeb orbit.
\end{abstract}

\keywords{Contact triad connection,  contact Cauchy--Riemann map,
a priori $W^{k,2}$-estimate, Weitzenb\"ock formula,
asymptotic subsequence convergence}

\thanks{This work is supported by IBS project \#IBS-R003-D1.}

\subjclass[2010]{Primary 53D42}

\maketitle

\tableofcontents

\section{Introduction}
\label{sec:intro}

Let $(M,\xi)$ denote a $2n+1$ dimensional contact manifold $M$
equipped with contact structure $\xi$ (a completely non-integrable distribution of rank $2n$).
Moreover, assume that $\xi$ is co-oriented, so that one is able to
choose a one form $\lambda$ such that $\ker \lambda = \xi$. Such a one form is called a contact one form, and is not unique
but is determined only up to multiplication by nowhere vanishing functions.
Given a contact one form, the Reeb vector field $X_\lambda$ associated to it is uniquely
determined by the equations
$$
X_{\lambda} \rfloor \lambda \equiv 1, \quad X_{\lambda} \rfloor d\lambda \equiv 0.
$$
As an immediate consequence from the definition of contact structure, $(\xi, d\lambda|_\xi)$ is a symplectic vector bundle over $M$ of rank $2n$.
In the presence of the contact form $\lambda$,
one considers the set of endomorphisms $J: \xi \to \xi$ that are compatible with $d\lambda$ in
the sense that the bilinear form $g_\xi = d\lambda(\cdot, J\cdot)$ defines
a Hermitian vector bundle $(\xi,J, g_\xi)$ on $M$.
We call such an endomorphism $J$ a $CR$-almost complex structure.
As in \cite{blair}, we extend $J$ to an endomorphism of $TM$ by setting $JX_\lambda=0$.
We call the triple $(M, \lambda, J)$ a \emph{contact triad} and equip $M$ with the Riemannian metric
$$
g_\lambda =g_\xi +  \lambda\otimes\lambda
$$
which we refer to as the \emph{contact triad metric}. With the contact triad metric, a contact triad carries the
same information as a contact metric manifold. (See \cite{blair} and the references therein for more information about contact triads.)

Our goal is to understand the contact manifold without directly using its symplectization. Therefore,
we focus on maps $w:\dot\Sigma\to M$ from the (punctured)
Riemann surface $(\dot\Sigma, j)$ to the contact manifold $M$.
By decomposing the tangent bundle as $TM=\xi\oplus \R\{X_\lambda\}$ and denoting the projection to $\xi$ by $\pi$, one can further decompose $d^\pi w := \pi dw = \del^\pi w + \delbar^\pi w$ into the $J$-linear and anti-$J$-linear part as $w^*\xi$-valued
$1$-forms on the punctured Riemann surface $\dot\Sigma$.
We begin by considering maps $w$ satisfying just $\delbar^\pi w = 0$, which is
a nonlinear degenerate elliptic equation.

\begin{defn}[Contact Cauchy--Riemann Map] Let $(M,\lambda, J)$ be a contact triad
and let $(\dot\Sigma,j)$ be a (punctured) Riemann surface.
We call a smooth map $w: \dot\Sigma \to M$ a \emph{contact Cauchy--Riemann map}
if it satisfies $\delbar^\pi w = 0$.
\end{defn}

To maximize the advantage of using tensor calculus in the analytic study of
contact Cauchy--Riemann maps, we use the \emph{contact triad connection}
the authors introduced in \cite{oh-wang1} associated to the contact triad $(M,\lambda,J)$.
The contact triad connection, in particular, preserves the triad metric.
We review the contact triad connection in Section \ref{sec:connection}.

Denote by $\nabla$ the contact triad connection on $M$ and by $\nabla^\pi$ the associated
Hermitian connection on the Hermitian vector bundle $(\xi,d\lambda|_\xi,J)$.
Various symmetry properties of the connections $\nabla$ and $\nabla^\pi$
enable us to derive  precise formulae concerning the second covariant differential of
$w$ and the Laplacian of the $\pi$-harmonic energy density function
for any contact Cauchy--Riemann map $w$.

The following a priori on-shell equation for the $\pi$-harmonic energy density is
the basis of our a priori estimates for the contact Cauchy--Riemann map $w$.
This on-shell equation is the contact analog to the equation for symplectic manifolds derived
by the first-named author in Theorem 7.3.4 \cite{oh:book}.

\begin{thm}[Fundamental Equation]
Let $w$ be a contact Cauchy--Riemann map. Then
$$
d^{\nabla^\pi}(d^\pi w) = -w^*\lambda\circ j \wedge\left( \frac{1}{2}(\CL_{X_\lambda}J)\, d^\pi w\right).
$$
\end{thm}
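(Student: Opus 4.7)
The plan is to reduce the identity to a purely tensorial computation involving only the torsion $T^\nabla$ of the contact triad connection. The key underlying fact is that for any affine connection $\nabla$ on $M$ and any smooth map $w:\dot\Sigma\to M$, the exterior covariant derivative of $dw$ equals the pullback of the torsion:
$$
d^{w^*\nabla}(dw)(X,Y) \;=\; T^\nabla(dw(X),dw(Y)).
$$
Since the contact triad connection preserves the orthogonal splitting $TM = \xi\oplus\R\{X_\lambda\}$ and satisfies $\nabla X_\lambda = 0$ (by the structural properties recalled in Section~\ref{sec:connection}), the projection $\pi:TM\to\xi$ intertwines $\nabla$ with $\nabla^\pi$, i.e.\ $\pi(\nabla_X Z) = \nabla^\pi_X(\pi Z)$ for any vector field $Z$ along $w$. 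Applying this along $w$ and using the formula above gives
$$
d^{\nabla^\pi}(d^\pi w)(X,Y) \;=\; \pi\, T^\nabla\!\big(dw(X),dw(Y)\big),
$$
so the entire content of the theorem is contained in an algebraic expansion of the right-hand side.

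To carry out that expansion, I would write $dw = d^\pi w + (w^*\lambda)\otimes X_\lambda$ and expand $T^\nabla(dw(X),dw(Y))$ bilinearly. A further property of the contact triad connection from \cite{oh-wang1} is that $T^\nabla(Y_1,Y_2)$ lies along $X_\lambda$ whenever $Y_1,Y_2\in\xi$, so the pure $\xi$--$\xi$ term drops out under $\pi$. Of the four terms in the expansion, only the two mixed ones survive, yielding
$$
\pi\,T^\nabla(dw(X),dw(Y)) \;=\; (w^*\lambda)(X)\,B(d^\pi w(Y)) - (w^*\lambda)(Y)\,B(d^\pi w(X)),
$$
where $B(Y) := \pi T^\nabla(X_\lambda,Y)$ for $Y\in\xi$. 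The torsion formula from \cite{oh-wang1} identifies $B$ with a multiple of $J\circ(\CL_{X_\lambda}J)$ on $\xi$. Inserting this expression, using $(\CL_{X_\lambda}J)J = -J(\CL_{X_\lambda}J)$, and then invoking the contact Cauchy--Riemann condition in the form $d^\pi w\circ j = J\,d^\pi w$, one absorbs the residual $J$ into a composition with $j$ on the domain. This exactly converts $w^*\lambda$ into $w^*\lambda\circ j$ and rewrites the two-term sum as the wedge product $-\,w^*\lambda\circ j \wedge \bigl(\tfrac12(\CL_{X_\lambda}J)\,\partial^\pi w\bigr)$ claimed in the statement; note that $\partial^\pi w = d^\pi w$ under the contact CR equation.

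The main obstacle is not any deep analysis but careful bookkeeping of normalizations: pulling the exact signs and the factor $\tfrac12$ in the formula for $\pi T^\nabla(X_\lambda,\cdot)$ from \cite{oh-wang1}, verifying that $\pi$ commutes with the exterior covariant derivative in the sense used above, and choosing the wedge convention so that the $j$ ends up on $w^*\lambda$ rather than on the $\xi$-valued factor. Once these normalizations are settled the identity follows by a direct computation in any oriented local frame $(\partial_\tau,\partial_t)$ of $T\dot\Sigma$ with $j(\partial_\tau)=\partial_t$.
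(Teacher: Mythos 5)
Your opening point of reduction is fine --- the identity $d^{w^*\nabla}(dw)(X,Y) = T^\nabla(dw(X),dw(Y))$ is standard --- but the two structural facts you then invoke are both false for the contact triad connection, and they are false precisely where the theorem's content lies. Axiom (3) of Theorem~\ref{thm:connection} gives only $\nabla_{X_\lambda}X_\lambda = 0$; for $Y\in\xi$ one has, by Corollary~\ref{cor:connection}, $\nabla_Y X_\lambda = \tfrac12(\CL_{X_\lambda}J)JY \in \xi$. So $\nabla X_\lambda \ne 0$ in general, $\nabla$ does \emph{not} preserve the splitting $TM = \xi\oplus\R\{X_\lambda\}$, and $\pi$ does \emph{not} intertwine $\nabla$ with $\nabla^\pi$. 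Concretely, $\pi(\nabla_X Z) = \nabla^\pi_X(\pi Z) + \lambda(Z)\,\nabla_X X_\lambda$, and that second term --- which you have discarded --- is exactly the source of the right-hand side of the fundamental equation; this is the computation carried out in Lemma~\ref{lem:FE-autono} of the paper.

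A compounding error: by Axiom (2), $T^\nabla(X_\lambda, \cdot) \equiv 0$, so the zero-order operator $B(Y) := \pi T^\nabla(X_\lambda, Y)$ you introduce is identically zero, not a multiple of $J(\CL_{X_\lambda}J)$. (You appear to be conflating the torsion $T^\nabla(X_\lambda,\cdot)$, which vanishes, with the covariant derivative $\nabla_{(\cdot)}X_\lambda$, which does not.) With the correct Axiom (2), your expansion of $\pi T^\nabla(dw,dw)$ would kill all the cross terms and leave only $T^\pi(\Pi dw,\Pi dw)$, so your line of reasoning would actually conclude $d^{\nabla^\pi}(d^\pi w) = T^\pi(\del^\pi w,\del^\pi w)$, which vanishes and gives the wrong answer. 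Finally, $T^\nabla(Y_1,Y_2)$ for $Y_1,Y_2\in\xi$ does \emph{not} lie along $X_\lambda$; Corollary~\ref{cor:connection}(2) only computes its $\lambda$-component, and $T^\pi|_\xi$ is generically nonzero. The correct reason it contributes nothing for a contact Cauchy--Riemann map is that $T^\pi|_\xi$ is of $(0,2)$-type by Axioms (4)--(5), so it annihilates the pair $(\del^\pi w,\del^\pi w)$, which is $J$-linear. You should redo the projection step directly as in the paper: write $\pi dw = dw - \lambda(dw)X_\lambda$, apply the pullback of $\nabla$, collect the torsion of the full connection, and keep the $\lambda(dw(\cdot))\,\nabla_{(\cdot)}X_\lambda$ terms --- that is where the $\tfrac12(\CL_{X_\lambda}J)$ factor comes from.
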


The upshot of this equation is that the second derivatives (the left hand side) of $w$
are expressed in terms of the first derivatives of $w$ (the right hand side).

Define the $\xi$-component of the standard harmonic energy density function of general smooth map $w$ by
$$
e^\pi = e^\pi(w) : = |d^\pi w|_{g_\lambda}^2 :=  |\pi dw|_{g_\lambda}^2,
$$
and further introduce the following.
\begin{defn}\label{defn:pi-harmonic-energy} For any smooth map $w: \dot\Sigma \to M$,
the \emph{$\pi$-harmonic energy} $E_{(\lambda,J)}^\pi(w,j)$ of the smooth map $w$ is defined as
$$
E_{(\lambda,J)}^\pi(w,j): = \frac{1}{2} \int_{\dot \Sigma} e^\pi(w) =\frac{1}{2} \int_{\dot \Sigma} |d^\pi w|_{g_\lambda}^2.
$$
\end{defn}

Since we do not vary $\lambda$ or  $j$ or $J$ in the present article, we will use the shorthand notation $E^\pi(w)$ for $E_{(\lambda,J)}^\pi(w,j)$
from now on. Also, we will omit the subindex $g_\lambda$ from the norm $|\cdot|_{g_\lambda}$.

\begin{thm}
Let $w$ be a contact Cauchy--Riemann map. Then
\beastar
- \frac{1}{2} \Delta e^\pi & = & |\nabla^\pi(\del^\pi w)|^2 + K\, |\del^\pi w|^2
+ \langle  \mbox{\rm Ric}^{\nabla^\pi}(w) \del^\pi w, \del^\pi w\rangle\\
&{}&  + \langle  \delta^{\nabla^\pi} \left((w^*\lambda \circ j) \wedge (\CL_{X_\lambda}J) \del^\pi w\right), \del^\pi w \rangle
\eeastar
where $K$ is the Gaussian curvature of the given K\"ahler metric $h$ on $(\dot\Sigma,j)$
and $\ric^{\nabla^\pi}(w)$ is the Ricci curvature operator of the contact Hermitian connection
$\nabla^\pi$ along the map $w$.
\end{thm}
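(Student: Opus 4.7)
The plan is to run a Bochner--Weitzenb\"ock argument on the $w^*\xi$-valued $1$-form $\alpha := \del^\pi w$ over the K\"ahler Riemann surface $(\dot\Sigma, j, h)$, using the pulled-back Hermitian connection $w^*\nabla^\pi$ on $w^*\xi$. The hypothesis $\delbar^\pi w = 0$ immediately reduces the statement: since $d^\pi w = \del^\pi w = \alpha$, we have $e^\pi = |\alpha|^2$, so it suffices to compute $-\tfrac12\Delta|\alpha|^2$ pointwise.

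I would first combine the standard Bochner identity
$$-\tfrac{1}{2}\Delta|\alpha|^2 \;=\; |\nabla^\pi\alpha|^2 \;-\; \langle \nabla^{\pi,*}\nabla^\pi\alpha, \alpha\rangle$$
with the Weitzenb\"ock formula for $w^*\xi$-valued $1$-forms on a Riemann surface,
$$\nabla^{\pi,*}\nabla^\pi\alpha \;=\; \Delta_H^{\nabla^\pi}\alpha - K\alpha - \ric^{\nabla^\pi}(w)\alpha,$$
where $\Delta_H^{\nabla^\pi} = d^{\nabla^\pi}\delta^{\nabla^\pi} + \delta^{\nabla^\pi}d^{\nabla^\pi}$. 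Three of the four required summands appear at once: the $K|\alpha|^2$ term arises from the Ricci of the base (which equals $K g$ in real dimension $2$), and the $\langle \ric^{\nabla^\pi}(w)\alpha, \alpha\rangle$ term from the Ricci-type contraction of the curvature of $w^*\nabla^\pi$. Both computations rely on $\nabla^\pi$ being Hermitian on $(\xi, J, d\lambda|_\xi)$, as recalled in Section~\ref{sec:connection} from \cite{oh-wang1}.

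It then remains to identify $-\langle \Delta_H^{\nabla^\pi}\alpha, \alpha\rangle$ with the final term of the statement. Because $\delbar^\pi w = 0$, $\alpha$ is $J$-linear, equivalently of type $(1,0)$ with respect to $j$ and $J$; since $\nabla^\pi$ is Hermitian, $w^*\nabla^\pi$ endows $w^*\xi$ with a holomorphic structure on the Riemann surface (the $(0,2)$-obstruction being automatic there), so the K\"ahler identities apply. Combined with the vanishing of $(2,0)$-forms and the $\R$-orthogonality of $(1,0)$- and $(0,1)$-type pieces, they yield the pointwise identity
$$\langle \Delta_H^{\nabla^\pi}\alpha, \alpha\rangle \;=\; 2\,\langle \delta^{\nabla^\pi}d^{\nabla^\pi}\alpha, \alpha\rangle.$$
Substituting the Fundamental Equation $d^{\nabla^\pi}\alpha = -\tfrac{1}{2}(w^*\lambda\circ j)\wedge (\CL_{X_\lambda}J)\del^\pi w$ and collecting signs then produces exactly the asserted last term, the factor $2$ from the K\"ahler identity absorbing the factor $\tfrac12$ from the Fundamental Equation.

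The main obstacle is this third step: verifying the K\"ahler-identity collapse $\langle \Delta_H^{\nabla^\pi}\alpha, \alpha\rangle = 2\langle \delta^{\nabla^\pi}d^{\nabla^\pi}\alpha, \alpha\rangle$ pointwise on the $(1,0)$-form $\alpha$. It rests on the compatibility of $\nabla^\pi$ with $J$ (so that $J$ is parallel and commutes with $d^{\nabla^\pi}$ and the Hodge star) together with the vanishing of $(2,0)$-type forms on a Riemann surface, which force the $d^{\nabla^\pi}\delta^{\nabla^\pi}$ and $\delta^{\nabla^\pi}d^{\nabla^\pi}$ contributions to pair identically with $\alpha$. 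Once these identifications are in place, the sign and factor bookkeeping becomes routine.
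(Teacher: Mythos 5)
Your argument is correct and follows essentially the same route as the paper: start from the Bochner--Weitzenb\"ock formula for $w^*\xi$-valued $1$-forms (the paper's \eqref{eq:bochner-weitzenbock-e}, proved in Appendix~\ref{appen:weitzenbock}), reduce the Hodge-Laplacian term to $2\langle\delta^{\nabla^\pi}d^{\nabla^\pi}\del^\pi w,\del^\pi w\rangle$ using that $\del^\pi w$ is of $(1,0)$-type and $\nabla^\pi$ is $J$-linear (the paper's Lemma~\ref{lem:2delta}), and then substitute the Fundamental Equation, Theorem~\ref{thm:Laplacian-w}. The only presentational difference is that where you appeal broadly to ``K\"ahler identities'' and a holomorphic structure on $w^*\xi$, the paper establishes the needed pointwise identity $\langle d^{\nabla^\pi}\delta^{\nabla^\pi}\del^\pi w,\del^\pi w\rangle=\langle\delta^{\nabla^\pi}d^{\nabla^\pi}\del^\pi w,\del^\pi w\rangle$ by a short direct chain using $*\alpha=-\alpha\circ j$, $J\del^\pi w=\del^\pi w\circ j$, and the $J$-linearity of $\nabla^\pi$, which is exactly the content your heuristic encodes.
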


Again the upshot of this theorem is that for a contact Cauchy--Riemann map, the Laplacian of $e^\pi(w)$
which involves the 3rd derivatives of $w$ is expressible in terms of the second and the first derivatives of $w$.

Notice that due to the mismatch of the dimensions, the contact Cauchy--Riemann map equation itself is not an elliptic system.
To conduct geometric analysis, we
augment the equation $\delbar^\pi w = 0$ by an additional equation,
$$
d(w^*\lambda \circ j) = 0,
$$
and define the following.
\begin{defn}[Contact Instanton] Let $(\dot\Sigma, j)$ be a (punctured) Riemann surface and $w: \dot\Sigma \to M$ be a
smooth map. We call a pair $(j,w)$ a
\emph{contact instanton} if they satisfy
\be\label{eq:contact-instanton}
\delbar^\pi w = 0, \, \quad d(w^*\lambda \circ j) = 0.
\ee
\end{defn}

We would like to point out that the system \eqref{eq:contact-instanton} (for fixed $j$)
forms an elliptic system, which is a natural elliptic twisting of the degenerate
Cauchy--Riemann equation $\delbar^\pi w = 0$. (We refer to \cite{oh:sigmamodel} for an elaboration
of this point of view.)

Another  worthwhile  point  is that while the first  equation involves
first derivatives, the second  equation involves second derivatives of $w$.
Therefore it is not enough to have a $W^{2,2}$-bound to get a classical
solution out of a weak solution. Rather it is crucial to establish at least a $W^{3,2}$ coercive estimate
to start the standard bootstrapping arguments. With this in mind, we will derive an a priori local $C^k$-estimates
for contact instantons with the help of the contact triad connection.

We start with the following

\begin{thm}\label{thm:local-W12}
Let $(\dot \Sigma, j)$ be a punctured Riemann surface with a possibly
empty set of punctures. Equip $\dot \Sigma$ with a metric which is cylindrical
near each puncture. Let $w: \dot\Sigma \to M$ be a contact instanton.
For any relatively compact domains $D_1$ and $D_2$ in
$\dot\Sigma$ such that $\overline{D_1}\subset D_2$, we have
$$
\|dw\|^2_{W^{1,2}(D_1)}\leq C_1 \|dw\|^2_{L^2(D_2)} + C_2 \|dw\|^4_{L^4(D_2)},
$$
where $C_1, \ C_2$ are some constants which
depend only on $D_1$, $D_2$ and $(M,\lambda, J)$.
\end{thm}

We also establish the following iterative local $W^{2+k,2}$-estimates on punctured surfaces $\dot \Sigma$
in terms of the $W^{\ell,p}$-norms with $\ell \leq k+1$. Combined with
Theorem \ref{thm:local-W12}, this theorem in turn provides
a priori local $W^{2+k,2}$-estimates in terms of (local) $L^2$, $L^4$ norms of $|d^\pi w|$,
and $|w^*\lambda|$.

\begin{thm}\label{thm:local-regularity}
Let $w$ be a contact instanton.
Then for any pair of domains $D_1 \subset D_2 \subset \dot \Sigma$ such that $\overline{D_1}\subset D_2$,
$$
\int_{D_1} |(\nabla)^{k+1}(dw)|^2 \leq \int_{D_2} \CJ_{k}(d^\pi w, w^*\lambda).
$$
Here $\CJ_k$ is a polynomial function of degree up to $2k+4$ with nonnegative coefficients  of the norms of the covariant derivatives
of $d^\pi w, \, w^*\lambda$ up to $0, \, \ldots, k$ with degree at most $2k + 4$
whose coefficients depending on $J$, $\lambda$ and $D_1, \, D_2$ but independent of $w$.

In particular, any weak solution of \eqref{eq:contact-instanton} in
$W^{1,4}_{\loc}$ automatically becomes a classical solution.
\end{thm}

We refer to Theorem \ref{thm:local-higher-regularity}
and the discussions around them for further exposition on these estimates.

Next, we focus on cylindrical neighborhoods of the punctures and consider maps $w: [0,\infty) \times S^1\to M$
which satisfy
\eqref{eq:contact-instanton}.
There are natural asymptotic invariants $T$ and $Q$ which are defined as
\beastar
T & := & \frac{1}{2}\int_{[0,\infty) \times S^1} |d^\pi w|^2 + \int_{\{0\}\times S^1}(w|_{\{0\}\times S^1})^*\lambda\\
Q & : = & \int_{\{0\}\times S^1}((w|_{\{0\}\times S^1})^*\lambda\circ j).
\eeastar
Call $T$ the
\emph{asymptotic contact action} and $Q$  the
\emph{asymptotic contact charge}.

For the study of the asymptotic behavior of the contact instanton map near the punctures,
it is important to classify all possible \emph{massless instantons}
(i.e., instantons satisfying $E^\pi(w) = 0$) on the cylinder $\R \times S^1$ equipped with the standard complex
structure $j$. This classification of massless instantons
differs greatly between the $Q = 0$ and $Q \neq 0$ regimes.

\begin{prop}\label{prop:massless} Let $w: \R \times S^1 \to M$ be a massless contact instanton. Then
there exists a leaf of the Reeb foliation such that
we can write $w_\infty(\tau,t)= \gamma(-Q\, \tau + T\, t)$, where $\gamma$ is a parameterization
of the leaf satisfying $\dot \gamma = X_\lambda(\gamma)$.

In particular, if $T\neq 0$, $\gamma$ is a \emph{closed} Reeb orbit of $X_\lambda$ with period $T$.
In addition if $Q = 0$, $w_\infty$ is invariant under $\tau$-translations.

If $T = 0$ and $Q \neq 0$, the leaf needs  not be  closed but must be the image of an immersion of $\R$.
\end{prop}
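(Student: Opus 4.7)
The plan is to reduce the problem to a classification of harmonic functions on the cylinder. The massless hypothesis $E^\pi(w)=0$ forces $d^\pi w \equiv 0$ pointwise, so $dw$ takes values in the Reeb line $\R\{X_\lambda(w)\}$ everywhere. In particular $w^*d\lambda\equiv 0$, and the connectedness of $\R\times S^1$ together with $dw$ being Reeb-tangent forces the image of $w$ to lie in a single leaf of the Reeb foliation. Choose a unit-speed parameterization $\gamma:\R\to M$ of this leaf with $\dot\gamma = X_\lambda\circ\gamma$, pass to the universal cover $\R\times\R$, and write $w=\gamma\circ h$ for a smooth function $h$ (which is single-valued modulo the period of $\gamma$). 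The chain rule gives $dw = X_\lambda(w)\,dh$, hence $w^*\lambda = dh$.

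Next I would translate the remaining equation. With the standard complex structure $j(\partial_\tau)=\partial_t$, a direct computation gives $dh\circ j = h_t\,d\tau - h_\tau\,dt$ and
$$d(w^*\lambda\circ j)= d(h_t\,d\tau - h_\tau\,dt) = -\Delta h\,d\tau\wedge dt,$$
so $d(w^*\lambda\circ j)=0$ is equivalent to $h$ being harmonic on the cylinder. Fourier expanding in $t$,
$$h(\tau,t) = \alpha + \beta\tau + Bt + \sum_{n\neq 0}\bigl(A_n e^{2\pi n\tau} + B_n e^{-2\pi n\tau}\bigr)e^{2\pi i n t},$$
where the multi-valued term $Bt$ (with $B$ a multiple of the period of $\gamma$) is permitted precisely because $\gamma$ identifies lifts differing by this period. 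Invoking the standing boundedness of $dw$ (equivalently of $dh$) that is in force in the asymptotic framework this proposition sits in, Parseval's identity applied to $h_\tau,h_t$ shows that every nonzero-frequency mode grows without bound on either $\tau\to+\infty$ or $\tau\to-\infty$ unless its coefficients vanish, so $h(\tau,t) = c + A\tau + Bt$ for constants $c,A,B\in\R$.

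Finally, read off the constants from the defining integrals. Restricted to $\{0\}\times S^1$, $w^*\lambda = dh$ becomes $B\,dt$, giving $T=B$ (the $\pi$-energy contribution vanishes since $E^\pi(w)=0$); and $w^*\lambda\circ j$ becomes $-A\,dt$, giving $Q=-A$. Substituting yields $w(\tau,t)=\gamma(-Q\tau+Tt+c)$, and absorbing $c$ into the parameterization of $\gamma$ produces the stated form. The subsidiary claims follow at once: periodicity $w(\tau,t+1)=w(\tau,t)$ forces $\gamma(s+T)=\gamma(s)$, so $\gamma$ is a closed Reeb orbit of period $T$ whenever $T\neq 0$; for $Q=0$ the map $w(\tau,t)=\gamma(Tt)$ is manifestly $\tau$-independent; for $T=0$ with $Q\neq 0$, no periodicity is forced on $\gamma$, so the leaf may be a non-compact immersed copy of $\R$. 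The principal obstacle is the Liouville-type step that eliminates the higher Fourier modes, which relies on a boundedness input for $dw$; if one wants to avoid this, one would replace it by the finite $\pi$-harmonic energy hypothesis together with a tail-decay argument, and I would verify which form of the hypothesis is actually in force in the surrounding section.
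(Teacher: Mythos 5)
Your proof is correct and takes essentially the same route as the paper: both reduce to classifying bounded harmonic data on the cylinder, the paper treating $w^*\lambda$ and $w^*\lambda\circ j$ directly as bounded harmonic $1$-forms on $\R\times S^1$ and concluding they have constant coefficients $a\,d\tau+b\,dt$ and $b\,d\tau-a\,dt$, while you lift to a harmonic function $h$ on the universal cover with $w^*\lambda=dh$ and carry out the Fourier/Liouville step by hand. You are right to flag the implicit gradient bound: in the paper this proposition is invoked for limits $w_\infty$ of translates $w(s_k+\cdot,\cdot)$, for which $\|dw_\infty\|_{C^0}\le\|dw\|_{C^0}<\infty$ is carried along automatically by Hypothesis \ref{hypo:basic}, and without some such bound the step eliminating the nonzero Fourier modes genuinely fails (e.g.\ $\operatorname{Re}(e^{z})\,dz$ gives an unbounded harmonic $1$-form on the cylinder).
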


With this classification result, we prove the following convergence result.
We refer readers to Theorem \ref{thm:subsequence}  for
more precise assumption for the following theorem.

\begin{thm}\label{thm:subsequence-intro} Let $w$ be any contact instanton on $[0,\infty) \times S^1$ with finite
$\pi$-harmonic energy
$$
E^\pi(w) = \frac{1}{2} \int_{[0,\infty) \times S^1} |d^\pi w|^2 < \infty,
$$
and finite gradient bound
$$
\|dw\|_{C^0;[0,\infty) \times S^1} < \infty.
$$
Then for any sequence $s_k\to \infty$, there exists a subsequence, still denoted by $s_k$, and a
massless instanton $w_\infty(\tau,t)$ (i.e., $E^\pi(w_\infty) = 0$)
on the cylinder $\R \times S^1$  such that
$$
\lim_{k\to \infty}w(s_k + \tau, t) = w_\infty(\tau,t)
$$
uniformly on $K \times S^1$ for any given compact set $K\subset\R$.

Furthermore if $Q = 0$ and $T \neq 0$, where $w_\infty(\tau, t) \equiv \gamma(T\, t)$ for some
closed Reeb orbit $\gamma$ of period $T$, the convergence is exponentially fast.
\end{thm}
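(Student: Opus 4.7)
My plan is to prove subsequential convergence by an Arzel\`a-Ascoli argument driven by the a priori estimates already established, identify the limit via the massless-instanton classification, and then upgrade to exponential convergence in the case $Q = 0$, $T \neq 0$ through the three-interval method.

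For the first part, set $w_k(\tau, t) := w(s_k + \tau, t)$, defined on $[-s_k, \infty) \times S^1$. Each $w_k$ is a contact instanton since the system \eqref{eq:contact-instanton} is $\tau$-translation invariant, and $\|dw_k\|_{C^0} \leq \|dw\|_{C^0} < \infty$ uniformly in $k$. For any precompact $K \subset \R \times S^1$ with slightly larger $K' \supset \overline{K}$, Theorem \ref{thm:local-higher-regularity} yields uniform bounds on $\|dw_k\|_{W^{\ell,2}(K)}$ for every $\ell \geq 2$; Sobolev embedding converts these to uniform $C^{\ell - 2}$ bounds on $dw_k$. After passing to a subsequence so that the base points $w(s_k, 0)$ converge in $M$ (immediate when $M$ is compact), a diagonal Arzel\`a-Ascoli extraction produces a subsequence converging in $C^\infty_{\loc}(\R \times S^1, M)$ to some $w_\infty$ that also solves \eqref{eq:contact-instanton}. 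The finite-energy hypothesis gives
\[
\int_{[-N, N] \times S^1} |d^\pi w_k|^2 \;=\; \int_{[s_k - N,\, s_k + N] \times S^1} |d^\pi w|^2 \;\longrightarrow\; 0
\]
for every $N$, forcing $d^\pi w_\infty \equiv 0$. Proposition \ref{prop:massless} then identifies $w_\infty(\tau, t) = \gamma(-Q\tau + Tt)$ along a Reeb leaf, closed of period $T$ whenever $T \neq 0$.

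For the exponential improvement when $Q = 0$ and $T \neq 0$, I would fix a tubular neighborhood of the limit closed Reeb orbit $\gamma$ with coordinates $(\theta, y) \in S^1 \times \xi_{\gamma(\theta)}$. For $\tau$ large, the map $w$ factors through this neighborhood, and the contact instanton system linearizes at $\gamma$ to a first-order asymptotic elliptic operator $A_\infty$ on $L^2(S^1, \gamma^*\xi)$ of Cauchy-Riemann type; by the nondegeneracy hypothesis, $A_\infty$ is invertible with spectral gap $\delta > 0$. Introducing a local energy
\[
F(\tau) := \int_{[\tau - 1,\, \tau + 1] \times S^1} \bigl(|d^\pi w|^2 + |w^*\lambda - T\, dt|^2\bigr),
\]
the three-interval method produces a discrete inequality of the schematic form
\[
F(\tau) \;\leq\; e^{-\delta}\cdot \frac{F(\tau - 1) + F(\tau + 1)}{2},
\]
whose iteration yields $F(\tau) \leq C e^{-\delta \tau}$. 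Combining this $L^2$-decay with Theorem \ref{thm:local-higher-regularity} applied on shrinking concentric cylinders upgrades the bound to $C^k$-decay of $w(\tau, \cdot)$ toward $\gamma(T\cdot)$ for every $k$.

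The subsequential half is essentially routine once the $C^k$ estimates are in hand. The real obstacle lies in the three-interval inequality. First, since $w^*\lambda \circ j$ is only closed and not exact in general, the vanishing $Q = 0$ is essential: it promotes closedness to exactness on the cylindrical end and removes a rotational drift that would otherwise spoil the comparison. Second, the discrete comparison functional must be chosen so that integrating against $A_\infty$ produces a positive-definite quadratic form after invoking $Q = 0$; pinning down this functional and proving the three-interval comparison with sharp constants tied to the spectral gap of $A_\infty$ is where the substantive technical work lives, and it is precisely what the three-interval method (replacing the usual differential-inequality approach) is designed to accomplish.
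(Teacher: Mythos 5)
Your first half --- translating by $s_k$, applying Theorem \ref{thm:local-higher-regularity} and Sobolev embedding to set up a diagonal Arzel\`a--Ascoli extraction, killing $d^\pi w_\infty$ by the vanishing of $\pi$-energy in the tails, and invoking Proposition \ref{prop:massless} to identify the limit --- follows the paper's proof of Theorem \ref{thm:subsequence} essentially verbatim.

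The second half has a genuine gap. You propose the combined local energy
\[
F(\tau) \;=\; \int_{[\tau-1,\tau+1]\times S^1}\bigl(|d^\pi w|^2 + |w^*\lambda - T\,dt|^2\bigr)
\]
and claim the three-interval method yields $F(\tau)\le e^{-\delta}\bigl(F(\tau-1)+F(\tau+1)\bigr)/2$ with $\delta$ controlled by the spectral gap of $A_\infty$. This cannot work as stated: $A_\infty$ acts only on sections of $\gamma^*\xi$, and its spectral gap says nothing about the Reeb component $w^*\lambda - T\,dt$. That component corresponds to kernel directions of the full linearized problem (translation along the orbit and reparametrization of the period), so the limiting operator in that direction has no spectral gap and cannot drive a three-interval inequality for that piece of $F$. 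The paper decouples the two mechanisms precisely because of this. First, the three-interval machinery of Section \ref{sec:3-interval} is run on $\zeta=\pi\frac{\partial w}{\partial\tau}$ alone, which obeys the linear evolution $\nabla^\pi_\tau\zeta+A(\tau)\zeta=0$ with $A(\tau)\to A_\infty$ nondegenerate on $\gamma^*\xi$; the argument is a contradiction argument in which a hypothetical infinite family of violated three-interval triples is rescaled and sent to a solution of the limiting equation, whose eigenfunction expansion under $A_\infty$ cannot violate the inequality once $\delta$ is chosen below the smallest positive eigenvalue. Second, with $\zeta$ already exponentially small, the Reeb data are packaged into $\theta=(w^*\lambda(\partial_t)-T)+\sqrt{-1}\,w^*\lambda(\partial_\tau)$, which satisfies $\delbar\theta=\frac12|\zeta|^2$ by Proposition \ref{prop:energy-omegaarea}, so its decay is bootstrapped from the already-decaying source via a Wirtinger-type differential inequality (Lemma \ref{lem:exp-decay-lemma}) --- not a three-interval argument. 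Finally a separate Cauchy-sequence argument (Proposition \ref{prop:czero-convergence}) upgrades these gradient decays to $C^0$ convergence of $w$ itself to a single Reeb orbit $z$. Your single functional $F$ conflates these two very different mechanisms and the inequality you posit for it would simply fail in the Reeb direction.

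Secondarily, you propose tubular-neighborhood coordinates $(\theta,y)$ around $\gamma$; this is the HWZ/Bourgeois route, which certainly can be made to work, but it is exactly what the paper is at pains to avoid. The purpose of the contact triad connection is to carry out the asymptotic analysis tensorially and coordinate-free, so adopting special neighborhood coordinates at this stage would abandon the point of the whole construction.
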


Proposition \ref{prop:massless} and Theorem \ref{thm:subsequence-intro}  generalize Hofer's subsequence
convergence result in \cite{hofer}. Hofer's result in the context of symplectization, roughly corresponds to the exact case
(i.e., $Q = 0$ in our setting).
Our asymptotic analysis for the contact instanton equations reveals the new phenomenon of
`spiraling' instantons along a `rotating' Reeb orbit when the asymptotic charge is nonzero.
\medskip

As outlined above, our original motivation to study this new elliptic system lies in our attempt
to better understand the contact manifold itself instead of its symplectization.
Indeed the question of whether two contact manifolds having symplectomorphic symplectization are
contactomorphic or not was addressed in the book by Cieliebak and Eliashberg
(\cite[p.239]{ciel-eliash}).
Courte \cite{courte} provided a construction of two contact manifolds   which are not contactomorphic (actually, even not diffeomorphic) but have
symplectomorphic symplectizations.
It would be interesting to
see whether our approach could lead to a construction of genuinely contact
topological quantum invariants of the Gromov--Witten or Floer- theoretic type
that could be used to investigate the following kind of question.
(See \cite{courte} where a similar question was explicitly stated.)

\begin{ques}\label{ques:courte} Do there exist contact structures $\xi$ and $\xi'$ on a closed
manifold $M$ that have the same classical invariants and are not contactomorphic,
but whose symplectizations are
(exact) symplectomorphic?
\end{ques}

We would like also to recall a celebrated result by Ruan \cite{ruan} in symplectic geometry. Using Gromov--Witten invariants, he described a pair of algebraic surfaces which have the same classical invariants
but whose products with $S^2$ are not symplectically deformation equivalent.
\medskip

We note that a similar equation to \eqref{eq:contact-instanton} was first mentioned
by Hofer in p.698 of \cite{hofer-survey}. Then Abbas--Cielibak--Hofer in \cite{ACH} and Abbas \cite{abbas},
as well as by Bergmann in \cite{bergmann, bergmann2} used this equation to attack the Weinstein conjecture for dimension $3$. We would like to point out that their equations
correspond to our instanton equations of vanishing charge, i.e., $Q=0$.

However as far as the authors are aware of,  systematic a priori estimates without involving
symplectization such as those presented in this article have not been developed in
the previous literature. In this regard, our a priori estimates for $w$ are stronger than
those in the literature in that the $s$-coordinates do not enter in the a priori
estimates of $w$ even for the pseudoholomorphic maps $u = (s,w)$ in the context of symplectization. We hope that this kind of $s$-independent estimate for $w$
will lead to a better understanding of the convergence behavior of the contact instanton
$w$ even for the exact case. For this reason, we split Part I of the preprint
\cite{oh-wang2} and write the present article. We view this one self-contained
with focus on the tensorial derivation for a priori estimates.

\bigskip

\section{Review of the contact triad connection}
\label{sec:connection}

As defined in the introduction \ref{sec:intro}, assume $(M, \lambda, J)$ is a contact triad of dimension $2n+1$ for the contact manifold $(M, \xi)$, and equip with it the contact triad metric
$g=g_\xi+\lambda\otimes\lambda$.
In \cite{oh-wang1}, the authors introduced the \emph{contact triad connection} associated to every contact triad $(M, \lambda, J)$ with the contact triad metric and proved its existence and uniqueness.

\begin{thm}[Contact Triad Connection \cite{oh-wang1}]\label{thm:connection}
For every contact triad $(M,\lambda,J)$, there exists a unique affine connection $\nabla$, called the contact triad connection,
 satisfying the following properties:
\begin{enumerate}
\item The connection $\nabla$ is  metric with respect to the contact triad metric, i.e., $\nabla g=0$;
\item The torsion tensor $T$ of $\nabla$ satisfies $T(X_\lambda, \cdot)=0$;
\item The covariant derivatives satisfy $\nabla_{X_\lambda} X_\lambda = 0$, and $\nabla_Y X_\lambda\in \xi$ for any $Y\in \xi$;
\item The projection $\nabla^\pi := \pi \nabla|_\xi$ defines a Hermitian connection of the vector bundle
$\xi \to M$ with Hermitian structure $(d\lambda|_\xi, J)$;
\item The $\xi$-projection of the torsion $T$, denoted by $T^\pi: = \pi T$ satisfies the following property:
\be\label{eq:TJYYxi}
T^\pi(JY,Y) = 0
\ee
for all $Y$ tangent to $\xi$;
\item For $Y\in \xi$, we have the following
$$
\del^\nabla_Y X_\lambda:= \frac12(\nabla_Y X_\lambda- J\nabla_{JY} X_\lambda)=0.
$$
\end{enumerate}
\end{thm}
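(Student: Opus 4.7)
The plan is to prove uniqueness and existence separately, exploiting the contact splitting $TM = \xi \oplus \R\{X_\lambda\}$ to analyze $\nabla_X Y$ sector by sector. For \emph{uniqueness}, I would first handle the $X_\lambda$-sector. Axiom (3) directly gives $\nabla_{X_\lambda} X_\lambda = 0$. For $Y \in \xi$, the torsion axiom (2) yields
\[
\nabla_{X_\lambda} Y = \nabla_Y X_\lambda + [X_\lambda, Y],
\]
where $[X_\lambda, Y] \in \xi$ follows automatically from $\CL_{X_\lambda} \lambda = 0$. Thus this sector reduces to determining the $\xi$-valued endomorphism $A(Y) := \nabla_Y X_\lambda$. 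By (3) and (6), $A$ lands in $\xi$ and is $J$-antilinear. Applying the consequence $\nabla^\pi_{X_\lambda} J = 0$ of axiom (4) to the formula $\nabla^\pi_{X_\lambda} Y = A(Y) + [X_\lambda, Y]$ then yields $(\CL_{X_\lambda} J)(Y) = 2 J A(Y)$, pinning down
\[
A = -\tfrac{1}{2} J \CL_{X_\lambda} J.
\]

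Next I would handle $\nabla_Y Z$ for $Y, Z \in \xi$. Metric compatibility (1) applied to the pair $(Z, X_\lambda)$ immediately gives the $X_\lambda$-component $\lambda(\nabla_Y Z) = -g_\xi(A(Y), Z)$. For the $\xi$-component $\nabla^\pi_Y Z$, polarizing axiom (5) $T^\pi(JY, Y) = 0$ and using $J$-linearity of $\nabla^\pi$ from (4) produces
\[
T^\pi(JY, Z) = T^\pi(Y, JZ) \qquad \forall\, Y, Z \in \xi,
\]
which is equivalent to the vanishing of the $(1,1)$-type component of $T^\pi$. Combined with Hermiticity this Chern-type condition determines $\nabla^\pi|_\xi$ uniquely, by the standard argument that the difference of two such connections is a skew-Hermitian $\xi^*$-valued $1$-form whose associated torsion would be purely of type $(1,1)$, forcing it to vanish; a Koszul-type formula then expresses $\nabla^\pi_Y Z$ explicitly in terms of $g_\xi$, $d\lambda$, $J$ and Lie brackets.

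For \emph{existence}, I would take the formulas produced by the uniqueness argument as definitions and verify the six axioms one at a time: (2), (3), (5), (6) are built in; (4) combines the Hermitian Koszul-type construction on $\xi$-directions with the identity $A = -\tfrac{1}{2} J \CL_{X_\lambda} J$, which guarantees $\nabla^\pi_{X_\lambda} J = 0$; and (1) reduces to a routine expansion of $(\nabla_X g)(Y, W)$ on each pair of sectors. The step I expect to be the main obstacle is the Chern-type construction of $\nabla^\pi|_\xi$ in a coordinate-free way --- this is the contact-geometric analogue of Gauduchon's canonical Hermitian connection and supplies the Koszul formula that drives the rest of the argument. Once it is in place, the remaining verifications are careful but mechanical bookkeeping using $\CL_{X_\lambda} \lambda = 0$ and the explicit form of $A$.
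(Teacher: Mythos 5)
The paper you are given does not actually prove Theorem~\ref{thm:connection}; it merely states it and cites \cite{oh-wang1} for the construction, so there is no in-text proof to compare against. Judging your sketch on its own terms, the sector-by-sector strategy is the right one and most of it is sound: your derivation of $A=\nabla_\bullet X_\lambda$ from axioms (2), (3), (6) together with $\nabla^\pi J=0$ is correct, as is extracting $\lambda(\nabla_YZ)=-g_\xi(A(Y),Z)$ from metric compatibility, and the polarization of axiom (5) into $T^\pi(JY,Z)=T^\pi(Y,JZ)$ (equivalently, vanishing of the $(1,1)$-part of $T^\pi$) is right.

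Where you go astray is the uniqueness step for $\nabla^\pi$ on the $\xi$-sector. You write that the difference $B$ of two candidate Hermitian connections has ``associated torsion purely of type $(1,1)$, forcing it to vanish'' --- that is backwards and, as stated, false: the antisymmetrization of an arbitrary $\mathfrak{u}(\xi)$-valued $1$-form does \emph{not} have pure bidegree. What axiom (5) actually gives you is that the $(1,1)$-part of the torsion \emph{difference} vanishes. To upgrade this to full vanishing of the torsion difference you need two additional facts, both pointed to by the remark following the theorem in the paper: (i) closedness of $d\lambda|_\xi$ (the almost-K\"ahler condition on the Reeb leaf space) forces the $(2,0)$-part of $T^\pi$ to vanish once the $(1,1)$-part does; and (ii) for any $J$-linear connection the $(0,2)$-part of the torsion equals a fixed multiple of the Nijenhuis tensor of $J$, hence is connection-independent. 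Together these pin down $T^\pi$ entirely, so the torsion difference is identically zero; only then does injectivity of the antisymmetrization map $\Omega^1(M,\mathfrak{u}(\xi))\to\Omega^2(M,\xi)$ (the restriction of the isomorphism $\mathfrak{so}\otimes V^*\cong V\otimes\Lambda^2V^*$ to the subspace $\mathfrak{u}\otimes V^*$) give $B=0$. With that correction in place, your Koszul-type existence step is plausible and the rest is indeed bookkeeping.
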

From this theorem, we see that the contact triad connection $\nabla$ canonically induces
a Hermitian connection $\nabla^\pi$ for the Hermitian vector bundle $(\xi, J, g_\xi)$, and we call it the \emph{contact Hermitian connection}. This connection will be used to study estimates for the $\pi$-energy in later sections.

The following remark provides some intuition of constructing the contact triad connection.
\begin{rem}
Recall that the leaf space of Reeb foliations of the contact triad $(M,\lambda,J)$
canonically carries a (non-Hausdorff) almost K\"ahler structure which we denote by
$
(\widehat M,\widehat{d\lambda},\widehat J).
$
We would like to note that Axioms (4) and (5)
are nothing but properties of the canonical connection
on the tangent bundle of the (non-Hausdorff) almost
K\"ahler manifold $(\widehat M, \widehat{d\lambda},\widehat J_\xi)$
lifted to $\xi$. In fact, as in the almost K\"ahler case, vanishing of the $(1,1)$-component
also implies vanishing of the $(2,0)$-component and hence the torsion must be of  $(0,2)$-type.
On the other hand, Axioms (1), (2) and (3)
indicate this connection behaves like the Levi-Civita connection along the Reeb direction $X_\lambda$.
Axiom (6) is an extra requirement to connect the information in the $\xi$ part and the $X_\lambda$ part,
which uniquely pins down the desired connection.
\end{rem}

Moreover, the following fundamental properties of the contact triad connection was
proved in \cite{oh-wang1}, which will be used to perform tensorial calculations later.

\begin{cor}\label{cor:connection}
Let $\nabla$ be the contact triad connection. Then
\begin{enumerate}
\item For any vector field $Y$ on $M$,
\be\label{eq:nablaYX}
\nabla_Y X_\lambda = \frac{1}{2}(\CL_{X_\lambda}J)JY;
\ee
\item $\lambda(T|_\xi)=d\lambda$.
\end{enumerate}
\end{cor}

We refer readers to \cite{oh-wang1} for more discussion on the contact triad connection and its relation with other related canonical type connections. In particular, we would like to remark that
\begin{rem}Using the identity
$\CL_{X_\lambda}\lambda=0=\CL_{X_\lambda}d\lambda$ it is not hard to see that,
the Reeb vector field is a Killing vector field with respect to the triad metric if and only if $\CL_{X_\lambda}J=0$.
In general, this is a strong additional requirement. For example, for $3$-dimensional contact manifolds, it is equivalent to the Sasakian condition.
Hence, for the contact triad connection, $\nabla X_\lambda$ doesn't vanish, which indicates that it is different from the canonical connection for the symplectization when lifted. For more details regarding this, refer \cite{oh-wang1} and the references therein.
\end{rem}

This section ends with introducing the following notation for later use.
Associated to the projection $\pi=\pi_\lambda$ from $TM$ to $\xi$,
we use $\Pi=\Pi_\lambda: TM\to TM$ to denote the corresponding idempotent, i.e., the endomorphism of $TM$ satisfying
${\Pi}^2 = \Pi$, $\Im \Pi = \xi$, $\ker \Pi = \R\{X_\lambda\}$.

\section{The contact Cauchy--Riemann maps}
\label{sec:CRmap}

Denote by $(\dot\Sigma, j)$ a punctured Riemann surface (including the case of closed Riemann surfaces without punctures).

\begin{defn}A smooth map $w:\dot\Sigma\to M$ is called a \emph{contact Cauchy--Riemann map}
(with respect to the contact triad $(M, \lambda, J)$), if $w$ satisfies the following Cauchy--Riemann equation
$$
\delbar^\pi w:=\delbar^{\pi}_{j,J}w:=\frac{1}{2}(\pi dw+J\pi dw\circ j)=0.
$$
\end{defn}

Recall that for a fixed smooth map $w:\dot\Sigma\to M$,
the triple $(w^*\xi, w^*J, w^*g_\xi)$ becomes  a Hermitian vector bundle
over the punctured Riemann surface $\dot\Sigma$.  This  introduces a Hermitian bundle structure on
$Hom(T\dot\Sigma, w^*\xi)\cong T^*\dot\Sigma\otimes w^*\xi$ over $\dot\Sigma$,
with inner product given by
$$
\langle \alpha\otimes \zeta, \beta\otimes\eta \rangle =h(\alpha,\beta)g_\xi(\zeta, \eta),
$$
where $\alpha, \beta\in\Omega^1(\dot\Sigma)$, $\zeta, \eta\in \Gamma(w^*\xi)$,
and $h$ is the K\"ahler metric on the punctured Riemann surface $(\dot\Sigma, j)$.

Let $\nabla^\pi$ be the contact Hermitian connection.
Combining the pulling-back of this connection and the Levi-Civita connection of the Riemann surface,
we get a Hermitian connection for the bundle $T^*\dot\Sigma\otimes w^*\xi \to \dot\Sigma$.
By a slight abuse of notation, we will still denote by $\nabla^\pi$ this combined connection.

The smooth map $w$ has an associated $\pi$-harmonic energy density
defined as the norm of the section $d^\pi w:=\pi dw$ of $T^*\dot\Sigma\otimes w^*\xi\to \dot\Sigma$.
In other words, it is the function $e^\pi(w):\dot\Sigma\to \R$ defined by
$
e^\pi(w)(z):=|d^\pi w|^2(z).
$
(Here we use $|\cdot|$ to denote the norm from $\langle\cdot, \cdot \rangle$ which should be clear from the context.)

Similarly to the case of pseudoholomorphic curves on almost K\"ahler manifolds,
we obtain the following basic identities.

\begin{lem}\label{lem:energy-omegaarea}
Fix a K\"ahler metric $h$ on $(\dot\Sigma,j)$,
and consider a smooth  map $w:\dot\Sigma \to M$.  Then we have the following equations
\begin{enumerate}
\item $e^\pi(w):=|d^\pi w|^2 = |\del^\pi w| ^2 + |\delbar^\pi w|^2$;
\item $2\, w^*d\lambda = (-|\delbar^\pi w|^2 + |\del^\pi w|^2) \,dA $
where $dA$ is the area form of the metric $h$ on $\dot\Sigma$;
\item $w^*\lambda \wedge w^*\lambda \circ j = - |w^*\lambda|^2\, dA$.
\end{enumerate}
As a consequence, if $w$ satisfies $\delbar^\pi w=0$, then
\be\label{eq:onshell}
|d^\pi w|^2 = |\del^\pi w| ^2 \quad \text{and}\quad w^*d\lambda = \frac{1}2|d^\pi w|^2 \,dA.
\ee
\end{lem}
\begin{proof} The proofs of (1) and (2) are exactly the same as the case of pseudo-holomorphic maps
in symplectic manifolds replacing $dw$ by $d^\pi w$ and the symplectic
form by $d\lambda$ and so they are omitted. See e.g., Proposition 7.2.3 \cite{oh:book} for the
statements and their proofs in the symplectic case corresponding the statements (1) and (2) here.
Statement (3) follows from the
definition of the Hodge star operator which implies that for any $1$-form $\beta$ on the Riemann surface
$*\beta=-\beta\circ j$, and we take $\beta=w^*\lambda$.
\end{proof}

Notice that the contact Cauchy--Riemann equation itself is \emph{not} an elliptic system
since the symbol is of rank $2n$ which is $1$ dimension lower than $TM$.
Here the closedness condition $d(w^*\lambda\circ j)=0$
leads to an elliptic system (see \cite{oh:sigmamodel} for an explanation)
\be\label{eq:instanton}
\delbar^\pi w=0, \quad d(w^*\lambda\circ j)=0.
\ee
\begin{defn} We call a solution of the system \eqref{eq:instanton} a \emph{contact instanton}
\end{defn}
Contact instantons are the main objects of our study in the present paper.

To illustrate the effect of the closedness condition on the behavior of
contact instantons, we look at them on \emph{closed}
Riemann surface and prove the following classification
result. The following proposition is stated by Abbas as a part of \cite[Proposition 1.4]{abbas}. For readers' convenience,
we separate this part for closed contact instantons (which are called  homologically
perturbed pseudo-holomorphic curves in \cite{abbas}) and give
a somewhat different proof.

\begin{prop}\label{prop:abbas} Assume $w:\Sigma\to M$ is a smooth contact instanton from a closed Riemann surface.
Then
\begin{enumerate}
\item If $g(\Sigma)=0$, $w$ is a constant map;
\item If $g(\Sigma)\geq 1$, $w$ is either a constant or the locus of its image
is a \emph{closed} Reeb orbit.
\end{enumerate}
\end{prop}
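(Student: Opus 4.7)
The key input is Proposition~2.2(2): for any map $w$ with $\bar\partial^\pi w=0$ one has
\[
w^*d\lambda \;=\; \tfrac{1}{2}|d^\pi w|^2\, dA.
\]
The plan is to integrate this over the closed surface and then exploit the auxiliary equation $d(w^*\lambda\circ j)=0$ to reduce the problem to the study of harmonic $1$-forms on $\Sigma$.

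\smallskip
\noindent\emph{Step 1: $w$ is tangent to the Reeb direction.} Since $\Sigma$ is closed, Stokes' theorem gives $\int_\Sigma w^*d\lambda=\int_\Sigma d(w^*\lambda)=0$. Combined with the displayed identity, this forces $|d^\pi w|\equiv 0$, i.e., $d^\pi w=\pi\circ dw=0$. Hence $dw$ takes values in $\R\{X_\lambda\}$ along the map, and we may write $dw=(w^*\lambda)\otimes X_\lambda(w)$ as a $TM$-valued $1$-form on $\Sigma$.

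\smallskip
\noindent\emph{Step 2: $w^*\lambda$ is a harmonic $1$-form on $\Sigma$.} Because $X_\lambda\rfloor d\lambda=0$ and $dw$ is Reeb-valued, $w^*d\lambda=0$; equivalently, $d(w^*\lambda)=0$. On the other hand, the contact instanton equation gives $d(w^*\lambda\circ j)=0$ directly. Since $*\alpha=-\alpha\circ j$ on $1$-forms on the Riemann surface (with any chosen K\"ahler metric in the conformal class of $j$), this says $d*(w^*\lambda)=0$. Thus $w^*\lambda$ is closed and co-closed, i.e., harmonic.

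\smallskip
\noindent\emph{Step 3: Genus $0$ case.} On $S^2$ the space of harmonic $1$-forms is trivial, so $w^*\lambda=0$, whence $dw=(w^*\lambda)X_\lambda\equiv0$, and $w$ is constant.

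\smallskip
\noindent\emph{Step 4: Genus $\geq 1$ case.} Assume $w$ is non-constant; I want to identify the image with a closed Reeb orbit. Pass to the universal cover $p:\widetilde\Sigma\to\Sigma$ and set $\widetilde w=w\circ p$. Since $\widetilde\Sigma$ is simply connected and $\widetilde w^*\lambda=p^*(w^*\lambda)$ is closed, we may write $\widetilde w^*\lambda=d\widetilde f$ for some smooth $\widetilde f:\widetilde\Sigma\to\R$. Because $d\widetilde w$ lies in $\R\{X_\lambda\}$, the uniqueness of integral curves of $X_\lambda$ forces $\widetilde w(z)=\gamma(\widetilde f(z))$ for a single integral curve $\gamma:\R\to M$ of $X_\lambda$ (after fixing a base point). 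Now consider the deck group $\Gamma$ acting on $\widetilde\Sigma$. Equivariance $\widetilde w\circ\sigma=\widetilde w$ for $\sigma\in\Gamma$ gives $\gamma(\widetilde f(\sigma z))=\gamma(\widetilde f(z))$, so each difference $\widetilde f(\sigma z)-\widetilde f(z)$ lies in the period group of $\gamma$. If $\gamma$ were not a closed orbit, its period group would be trivial; then $\widetilde f$ would be $\Gamma$-invariant and descend to $f:\Sigma\to\R$ with $w^*\lambda=df$ exact. But a harmonic exact form on the closed Riemann surface $\Sigma$ is zero, forcing $w$ to be constant, contradicting our assumption. Therefore $\gamma$ is a \emph{closed} Reeb orbit of some minimal period $T>0$, and $w$ factors as $w=\gamma\circ f$ for a smooth $f:\Sigma\to\R/T\Z\cong S^1$ with $f^*(dt)=w^*\lambda$. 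Since $w^*\lambda$ is non-zero and harmonic, it represents a non-trivial class in $H^1(\Sigma;\R)$, so $f$ is non-null-homotopic; any non-null-homotopic continuous map into $S^1$ is surjective (otherwise it lifts to $\R$ and has exact pullback of $dt$). Hence the image of $w$ is the entire closed Reeb orbit $\gamma(\R/T\Z)$.

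\smallskip
\noindent\emph{Expected main obstacle.} Steps 1--3 are immediate from the two defining equations plus Hodge theory. The substantive point is Step~4: one must rule out the possibility that a non-constant $w$ sweeps out only a proper compact subset of a non-closed Reeb orbit. The universal-cover/period-group dichotomy above takes care of this, and the surjectivity of $f:\Sigma\to S^1$ in the remaining case follows from the standard covering-space characterization of null-homotopic maps into $S^1$.
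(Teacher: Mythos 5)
Your Steps 1--3 coincide exactly with the paper's argument (Stokes to kill $|d^\pi w|^2$, then closedness of $w^*\lambda$ from $d^\pi w=0$ plus coclosedness from $d(w^*\lambda\circ j)=0$, then Hodge theory on $S^2$). Step 4, however, is handled by a genuinely different route. The paper stays on $\Sigma$: it observes that $w(\Sigma)$ is a compact connected subset of a single Reeb leaf, hence either a circle (done) or a compact arc $I$; in the arc case it picks a single fundamental chart $[\tau_0,\tau_1]$ of the parametrisation $\gamma$ around $I$, sets $f=\gamma^{-1}\circ w$, shows $\Delta f=\delta w^*\lambda=0$, and concludes $f$ is a harmonic function on a closed surface, hence constant, forcing $w$ to be constant. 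You instead lift to the universal cover $\widetilde\Sigma$, integrate $\widetilde w^*\lambda=d\widetilde f$ there, express $\widetilde w=\gamma\circ\widetilde f$ along a maximal integral curve $\gamma$, and use deck-group equivariance to see that $\widetilde f(\sigma z)-\widetilde f(z)$ lands in the period group of $\gamma$. Triviality of this group would make $\widetilde f$ descend to a nonzero harmonic exact $1$-form on $\Sigma$, a contradiction; so $\gamma$ is periodic and $w$ factors through $f:\Sigma\to\R/T\Z$. Both proofs are correct. Your version avoids the case split on the topological type of the image and yields a clean factorization of $w$ through a circle (together with the observation that the induced map is surjective because it is non-nullhomotopic), whereas the paper's version is more elementary -- no covering-space theory, only a harmonic-function argument on $\Sigma$ itself -- at the cost of an auxiliary choice of an embedded arc $I'$ and a fundamental domain $[\tau_0,\tau_1]$ for $\gamma$ around it.
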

\begin{proof}For contact Cauchy--Riemann maps, Lemma \ref{lem:energy-omegaarea} implies
that $|d^\pi w|^2\, dA =d(2w^*\lambda)$.
By Stokes' formula, we get $d^\pi w=0$ if the domain is a closed Riemann surface,
and further, $dw=w^*\lambda \otimes X_\lambda$, i.e., $w$ must have its image contained in
a single leaf of the (smooth) Reeb foliation.

Another consequence of the vanishing $d^\pi w=0$
is that $dw^*\lambda = 0$.
Now this combined with the equation $d(w^*\lambda\circ j)=0$, which is equivalent to $\delta w^*\lambda=0$,
implies that $w^*\lambda$ (so is $*w^*\lambda$) is a harmonic $1$-form on the Riemann surface $\Sigma$.

If the genus of $\Sigma$ is zero, $w^*\lambda = 0$ by the Hodge's theorem.
This proves statement (1).

Now assume $g(\Sigma) \geq 1$. Suppose $w$ is not a constant map. Since $\Sigma$ is compact and connected,
$w(\Sigma)$ is compact and connected. Furthermore recall $w(\Sigma)$ is contained in a single leaf
of the Reeb foliation which we denote by
$\CL$. We take a parametrization $\gamma: \R \to \CL \subset M$ such that $\dot \gamma = X_\lambda(\gamma(t))$.
By the classification of compact one dimensional manifolds, the image $w(\Sigma)$ is homeomorphic
either to the unit closed interval or to the circle. For the latter case, we are done.

For the former case, we let $I$ denote $\omega(\Sigma)$ which is contained in the leaf $\CL$.
We slightly extend the interval $I$ to $I' \subset \CL$ so that $I'$ still becomes an
embedded interval contained in $\CL$. The preimage $\gamma^{-1}(I')$ is a disjoint union of
a sequence of intervals $[\tau_k, \tau_{k+1}]$ with $\cdots < \tau_{-1} < \tau_0 < \tau_1 < \cdots$ for
$k \in \Z$. Fix any single interval, say,
$[\tau_0, \tau_1] \subset \R$.

We denote by $\gamma^{-1}: I' \to [\tau_0,\tau_1] \subset \R$ the inverse of
the parametrisation $\gamma$ restricted to $[\tau_0,\tau_1]$. Then by construction
$\gamma^{-1}(I) \cap [\tau_0,\tau_1] \subset (\tau_0,\tau_1)$.

Now we denote by $t$ the standard coordinate function of $\R$ and
consider the composition $f: = \gamma^{-1} \circ w: \Sigma \to \R$. It follows that $f$ defines
a smooth function on $\Sigma$ satisfying
$$
\gamma \circ f = w
$$
on $\Sigma$ by construction. Then recalling $\dot \gamma = X_\lambda(\gamma)$, we obtain
$$
w^*\lambda = f^*(\gamma^*\lambda) = f^*(dt) = df.
$$
Therefore $\Delta f = \delta df = \delta w^*\lambda = 0$, i.e., $f$
is a harmonic function on the closed surface $\Sigma$ and so must be a constant function.
This in turn implies $w^*\lambda = 0$. Then $dw = d^\pi w + w^*\lambda X_\lambda(w) = 0 + 0 = 0$
i.e., $w$ is a constant map which contradicts the standing hypothesis. Therefore
the map $w$ must be constant unless the image of $w$
wraps up a closed Reeb orbit.

\end{proof}

\section{Calculation of the Laplacian of $\pi$-harmonic energy density}
\label{sec:calculations}

In this section, we use the contact triad connection to derive some identities related to the $\pi$-harmonic energy for
contact Cauchy--Riemann maps.
Our derivation is based on \emph{coordinate-free} tensorial calculations.
The contact triad connection fits well for this purpose which will be seen clearly in this section.

We start with looking at the (Hodge) Laplacian of the $\pi$-harmonic energy density
of an arbitrary smooth map $w: \dot \Sigma \to M$, which is not necessarily contact Cauchy--Riemann,
 i.e., in the \emph{off-shell level} in physics terminology.
As the first step, we apply the standard Weitzenb\"ock formula to the connection $\nabla^\pi$
on $T^*\dot\Sigma \otimes w^*\xi$ that is induced by the the pull-back connection on
bundle $w^*\xi$ and the Levi-Civita connection on $T\dot \Sigma$, and obtain
the following formula
\bea
-\frac{1}{2}\Delta e^\pi(w)&=&|\nabla^\pi(d^\pi w) |^2-\langle \Delta^{\nabla^\pi} d^\pi w, d^\pi w\rangle
+K\cdot |d^\pi w|^2+\langle \ric^{\nabla^\pi}(d^\pi w), d^\pi w\rangle.\nonumber\\
&&\label{eq:bochner-weitzenbock-e}
\eea
Here $e^\pi:=e^\pi(w)$, $K$ is the Gaussian curvature of $\dot\Sigma$,
and $\ric^{\nabla^\pi}$ is the Ricci tensor of the connection $\nabla^\pi$
on the vector bundle $w^*\xi$.
(For readers' convenience, we give the proof of  \eqref{eq:bochner-weitzenbock-e} in Appendix \ref{appen:weitzenbock}.
For the basic differential notations, such as $d^\nabla$, $\delta^\nabla$ etc.,
we also refer readers to that section if necessary.)

Next we derive an important expression for $d^{\nabla^\pi}d^\pi w$ in the off-shell level, which
is the analog to a similar formula
\cite[Lemma 7.3.2]{oh:book} in the symplectic context.

\begin{lem}\label{lem:FE-autono} Let $w: \dot\Sigma \to M$ be any smooth map. Denote by $T^\pi$ the torsion tensor of $\nabla^\pi$.
Then as a two form with values in $w^*\xi$,
$d^{\nabla^\pi} (d^\pi w)$ has the expression
\be\label{eq:dnabladpiw}
d^{\nabla^\pi} (d^\pi w)= T^\pi(\Pi dw, \Pi dw)+ w^*\lambda \wedge \left(\frac{1}{2} (\CL_{X_\lambda}J)\, Jd^\pi w\right).
\ee
\end{lem}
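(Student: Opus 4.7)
The plan is to compute $d^{\nabla^\pi}(\Pi dw)$ directly from the definition of the exterior covariant derivative of a $w^*\xi$-valued $1$-form, and to reduce it to the torsion of the pullback of the contact triad connection plus a correction arising from differentiating the $X_\lambda$-component of $dw$.

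First I would decompose $dw = \Pi dw + w^*\lambda \otimes X_\lambda(w)$ and, for vector fields $X, Y$ on $\dot\Sigma$, expand
$$
d^{\nabla^\pi}(\Pi dw)(X,Y) = \nabla^\pi_X(\Pi dw(Y)) - \nabla^\pi_Y(\Pi dw(X)) - \Pi dw([X,Y]).
$$
Since $\Pi dw(Y)$ is a section of $w^*\xi$, the defining property $\nabla^\pi = \pi\nabla|_\xi$ lets me write $\nabla^\pi_X = \pi\circ \nabla_X$, where $\nabla$ denotes the pullback of the contact triad connection to $w^*TM$. Applying the Leibniz rule to $\Pi dw(Y) = dw(Y) - w^*\lambda(Y)\, X_\lambda(w)$ and using $\pi X_\lambda = 0$ then yields
$$
\nabla^\pi_X(\Pi dw(Y)) = \pi\, \nabla_X(dw(Y)) - w^*\lambda(Y)\, \pi(\nabla_X X_\lambda).
$$

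Next I would invoke Corollary \ref{cor:connection}(1) to substitute $\nabla_X X_\lambda = \tfrac12 (\CL_{X_\lambda}J) J\, dw(X)$. Because $J$ annihilates $X_\lambda$, this equals $\tfrac12 (\CL_{X_\lambda}J) J\, d^\pi w(X)$, and since the result already lies in $\xi$, the outer $\pi$ is redundant. Antisymmetrizing in $X, Y$ then produces exactly the wedge-product term $w^*\lambda \wedge (\tfrac12 (\CL_{X_\lambda}J) J d^\pi w)$ appearing in \eqref{eq:dnabladpiw}.

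It remains to identify the first term. The standard torsion identity applied to the pullback of $\nabla$ gives
$$
\nabla_X(dw(Y)) - \nabla_Y(dw(X)) - dw([X,Y]) = T(dw(X), dw(Y)),
$$
and projecting by $\pi$ yields $T^\pi(dw(X), dw(Y))$. Axiom (2) of Theorem \ref{thm:connection}, namely $T(X_\lambda,\cdot)=0$, then allows replacing $dw$ by $\Pi dw$ inside $T^\pi$ without changing the value, producing the leading term $T^\pi(\Pi dw, \Pi dw)$. Assembling the pieces gives \eqref{eq:dnabladpiw}. The only real bookkeeping concern is tracking the $X_\lambda$-component carefully through $\pi$ and $J$, together with sign conventions for the wedge product; once that is done, the identity is an immediate consequence of axioms (2) and (3) of the contact triad connection combined with Corollary \ref{cor:connection}(1).
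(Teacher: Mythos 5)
Your proposal is correct and follows essentially the same route as the paper's own proof: expand $d^{\nabla^\pi}(\Pi dw)(X,Y)$ via the Leibniz rule, split $dw=\Pi dw + w^*\lambda\otimes X_\lambda(w)$, reorganize the $\xi$-projected terms into the torsion $T^\pi$ via Axiom~(2), and use Corollary~\ref{cor:connection}(1) together with Axiom~(3) to turn the remaining $-w^*\lambda(\cdot)\,\pi\nabla_{(\cdot)}X_\lambda$ terms into the wedge with $\tfrac12(\CL_{X_\lambda}J)Jd^\pi w$. The only cosmetic difference is that you phrase the antisymmetrization in terms of the Lie bracket while the paper uses $\nabla_{\xi_1}\xi_2-\nabla_{\xi_2}\xi_1=[\xi_1,\xi_2]$ (torsion-freeness of the Levi-Civita connection on $\dot\Sigma$); these are equivalent.
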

\begin{proof}
For given $\xi_1, \, \xi_2 \in \Gamma(T\Sigma)$, evaluate $d^{\nabla^\pi} (d^\pi w)(\xi_1, \xi_2)$ as
\beastar
&{}& d^{\nabla^\pi} (d^\pi w)(\xi_1, \xi_2)\\
&=&(\nabla^\pi_{\xi_1}(\pi dw))(\xi_2)-(\nabla^\pi_{\xi_2}(\pi dw))(\xi_1)\\
&=&\left(\nabla^\pi_{\xi_1}(\pi dw(\xi_2))-\pi dw(\nabla_{\xi_1}\xi_2)\right)
-\left(\nabla^\pi_{\xi_2}(\pi dw(\xi_1))
-\pi dw\left(\nabla_{\xi_2}\xi_1\right)\right)\\
&=& \pi\Big(\left(\nabla_{\xi_1}( dw(\xi_2))-\nabla_{\xi_1}(\lambda(dw(\xi_2))X_\lambda)\right)
-\left(\nabla_{\xi_2}( dw(\xi_1))-\nabla_{\xi_2}(\lambda(dw(\xi_1))X_\lambda)\right)\\
&{}&- dw \left(\nabla_{\xi_1}\xi_2-\nabla_{\xi_1}\xi_2\right)\Big)\\
&=&\pi \left(\nabla_{\xi_1}(dw(\xi_2))-\nabla_{\xi_2}(dw(\xi_1))-[dw(\xi_1), dw(\xi_2)]\right)\\
&{}&-\nabla_{\xi_1}(\lambda(dw(\xi_2))X_\lambda)
+\nabla_{\xi_2}(\lambda(dw(\xi_1))X_\lambda)\Big)\\
&=& \pi(T(dw(\xi_1),  dw(\xi_2))-\lambda(dw(\xi_2))\nabla_{\xi_1}X_\lambda-\xi_1[\lambda(dw(\xi_2))]X_\lambda\\
&{}& +\lambda(dw(\xi_1))\nabla_{\xi_2}X_\lambda + \xi_2[\lambda(dw(\xi_1))]X_\lambda\Big) \\
&=&\pi(T(dw(\xi_1),  dw(\xi_2)))-\lambda(dw(\xi_2))\nabla_{\xi_1}X_\lambda+\lambda(dw(\xi_1))\nabla_{\xi_2}X_\lambda\\
&=&T^\pi(\Pi dw(\xi_1), \Pi dw(\xi_2))\\
&{}&+\frac{1}{2}\lambda(dw(\xi_2))J(\CL_{X_\lambda}J)\pi dw(\xi_1)-\frac{1}{2}\lambda(dw(\xi_1))J(\CL_{X_\lambda}J)\pi dw(\xi_2)\\
&=&T^\pi(\Pi dw(\xi_1), \Pi dw(\xi_2))\\
&{}&-\frac{1}{2}\lambda(dw(\xi_2))(\CL_{X_\lambda}J)J\pi dw(\xi_1)+\frac{1}{2}\lambda(dw(\xi_1))(\CL_{X_\lambda}J)J\pi dw(\xi_2).
\eeastar
Here we used \eqref{eq:nablaYX} and Axiom (3) for the last second equality.
Rewrite the above result as
$$
d^{\nabla^\pi} (d^\pi w)= T^\pi(\Pi dw, \Pi dw)+ w^*\lambda \wedge \left(\frac{1}{2} (\CL_{X_\lambda}J)\, Jd^\pi w\right)
$$
for any $w$, and we have finished the proof.
\end{proof}

We warn that readers should not get confused with the wedge product we have used here, which is
the wedge product for forms in the usual sense, i.e., $(\alpha_1\otimes\zeta)\wedge\alpha_2=(\alpha_1\wedge\alpha_2)\otimes\zeta$
for $\alpha_1, \alpha_2\in \Omega^*(P)$ and $\zeta$ a section of $E$. This is not the wedge product defined in Appendix \ref{appen:forms}.

We now restrict the above lemma to the case of
contact Cauchy--Riemmann maps, i.e., maps satisfying $\delbar^\pi w = 0$.
In \cite[Theorem 7.3.4]{oh:book} the author proves that
for any standard $J$-holomorphic map $u$ in an almost K\"ahler manifold, the $u^*TM$-valued one-form $du$ is harmonic with respect to the canonical connection.
Now the following Theorem \ref{thm:Laplacian-w}
is its contact analogue, which describes how much $d^\pi w(=\del^\pi w)$ deviates from being a $w^*\xi$-valued harmonic one-form.
The formula explicitly calculates
the difference, which is caused by Reeb projection  and
corresponds to the second term of \eqref{eq:dnabladpiw}.

As an immediate corollary of the previous lemma applied to the contact Cauchy--Riemann maps,
we derive the following formula, calling it the \emph{fundamental equation}.

\begin{thm}[Fundamental Equation]\label{thm:Laplacian-w}
Let $w$ be a contact Cauchy--Riemann map, i.e., a solution of $\delbar^\pi w=0$.
Then
\be\label{eq:Laplacian-w}
d^{\nabla^\pi} (d^\pi w) =d^{\nabla^\pi} (\del^\pi w)= -w^*\lambda\circ j \wedge\left(\frac{1}{2} (\CL_{X_\lambda}J)\, \del^\pi w\right).
\ee
\end{thm}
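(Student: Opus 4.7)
The plan is to deduce the Fundamental Equation as a direct corollary of Lemma \ref{lem:FE-autono}, by specializing that lemma's general expression to the contact Cauchy-Riemann case. Lemma \ref{lem:FE-autono} gives, for any smooth $w$,
$$d^{\nabla^\pi}(d^\pi w) = T^\pi(\Pi dw, \Pi dw) + w^*\lambda \wedge \left(\tfrac{1}{2}(\CL_{X_\lambda} J)\, J d^\pi w\right),$$
so it suffices to show that, under the hypothesis $\delbar^\pi w = 0$, the torsion term vanishes identically and the second term can be rewritten into $-w^*\lambda \circ j \wedge \left(\tfrac{1}{2}(\CL_{X_\lambda} J)\, \del^\pi w\right)$.

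For the torsion term, note that $\delbar^\pi w = 0$ forces $d^\pi w = \del^\pi w$, which is $J$-linear on $T\dot\Sigma$, i.e., $d^\pi w \circ j = J\, d^\pi w$. Choosing any local oriented orthonormal frame $\{e_1, e_2 = j e_1\}$ for $T\dot\Sigma$ and using antisymmetry of the torsion, one obtains
$$T^\pi(\Pi dw, \Pi dw)(e_1, e_2) = 2\, T^\pi\bigl(d^\pi w(e_1),\, J d^\pi w(e_1)\bigr) = 0,$$
where the last equality invokes Axiom (5) of Theorem \ref{thm:connection}, namely $T^\pi(JY, Y) = 0$ (equivalently $T^\pi(Y, JY) = 0$) for $Y \in \xi$. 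Since this is a $2$-form on a Riemann surface, vanishing on one oriented frame pair suffices to conclude $T^\pi(\Pi dw, \Pi dw) \equiv 0$.

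For the Reeb-type term, again using $J d^\pi w = J \del^\pi w = \del^\pi w \circ j$, one rewrites
$$w^*\lambda \wedge \tfrac{1}{2}(\CL_{X_\lambda} J)\bigl(\del^\pi w \circ j\bigr) = w^*\lambda \wedge \Bigl(\bigl(\tfrac{1}{2}(\CL_{X_\lambda} J)\del^\pi w\bigr) \circ j\Bigr),$$
and then applies the elementary pointwise identity $\alpha \wedge (\beta \circ j) = -(\alpha \circ j) \wedge \beta$, valid for any $1$-forms $\alpha, \beta$ on a Riemann surface (one of which may take values in a vector bundle, with $j$ acting only on the domain factor). This identity is checked directly on the frame $\{e_1, je_1\}$. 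Applied with $\alpha = w^*\lambda$ and $\beta = \tfrac{1}{2}(\CL_{X_\lambda} J)\del^\pi w$, it converts the display above into $-w^*\lambda \circ j \wedge \left(\tfrac{1}{2}(\CL_{X_\lambda} J)\, \del^\pi w\right)$, which is the claimed identity.

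There is no serious obstacle: the argument is essentially a specialization of Lemma \ref{lem:FE-autono} together with two structural inputs — the defining axiom $T^\pi(JY, Y) = 0$ of the contact triad connection and the $J$-linearity of $\del^\pi w$. The one place requiring mild care is disentangling the two independent uses of $J$: as a bundle endomorphism acting on $\xi$-valued quantities, and as the complex structure $j$ acting on $T\dot\Sigma$ inside the wedge product. Once that bookkeeping is done cleanly, the theorem drops out of Lemma \ref{lem:FE-autono} with essentially no further computation.
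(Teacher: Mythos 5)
Your proof is correct and follows essentially the same path as the paper: specialize Lemma \ref{lem:FE-autono}, kill the torsion term using $J$-linearity of $\del^\pi w$ together with the $(1,1)$-vanishing of $T^\pi|_\xi$ (Axiom (5)), and then convert the Reeb term with $J\del^\pi w = \del^\pi w\circ j$ and the elementary identity $\alpha\wedge(\beta\circ j) = -(\alpha\circ j)\wedge\beta$. The only cosmetic difference is that you unpack the torsion vanishing directly on a frame via $T^\pi(JY,Y)=0$, while the paper phrases it by saying $T^\pi|_\xi$ is of $(0,2)$-type and thus annihilates the $(1,0)$-form $\del^\pi w$ in both slots; these are equivalent. (Minor quibble, not affecting the conclusion: with the paper's convention $T^\pi(\Pi dw,\Pi dw)(\xi_1,\xi_2)=T^\pi(\Pi dw(\xi_1),\Pi dw(\xi_2))$, the factor $2$ in your frame evaluation should be $1$.)
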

\begin{proof} The first equality follows since
$d^\pi w=\del^\pi w$ for the solution $w$. Also notice that being a contact Cauchy--Riemann map implies that
$$
T^\pi(\Pi dw,\Pi dw) = T^\pi(\del^\pi w, \del^\pi w) = 0,
$$
which is due to the torsion $T^\pi |_\xi$ being of $(0,2)$-type (in particular, having vanishing $(2,0)$-component).
Furthermore we write \eqref{eq:dnabladpiw} as
\beastar
d^{ \nabla^\pi}(d^\pi w) & = & w^*\lambda
\wedge \left(\frac{1}{2} (\CL_{X_\lambda}J)\, J\del^\pi w\right)\\
& = &  w^*\lambda \wedge \left(\frac{1}{2} (\CL_{X_\lambda}J)\, \del^\pi w\right) \circ j\\
&=&-w^*\lambda\circ j \wedge\left(\frac{1}{2} (\CL_{X_\lambda}J)\, \del^\pi w\right),
\eeastar
using the identity $J\del^\pi w = \del^\pi w \circ j$.
\end{proof}

\begin{cor}[Fundamental Equation in Isothermal Coordinates]
Let $(\tau,t)$ be an isothermal coordinates.
Write $\zeta := \pi \frac{\del w}{\del \tau}$
as a section of $w^*\xi \to M$. Then
\be\label{eq:main-eq-a}
\nabla_\tau^\pi \zeta + J \nabla_t^\pi \zeta
 - \frac{1}{2} \lambda(\frac{\del w}{\del t})(\CL_{X_\lambda}J)\zeta + \frac{1}{2} \lambda(\frac{\del w}{\del \tau})(\CL_{X_\lambda}J)J\zeta =0.
\ee
\end{cor}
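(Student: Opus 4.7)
The plan is to simply unpack the coordinate-free Fundamental Equation (Theorem 4.2) on the two basis vectors $\del/\del\tau, \del/\del t$ of an isothermal chart. First I note three elementary preliminaries: in isothermal coordinates $j\frac{\del}{\del\tau} = \frac{\del}{\del t}$ (so $j\frac{\del}{\del t} = -\frac{\del}{\del\tau}$) and the coordinate fields commute; the $\bar\partial^\pi$-equation $\delbar^\pi w = 0$ forces
$$
\pi \frac{\del w}{\del t} = J\,\pi\frac{\del w}{\del\tau} = J\zeta,
$$
so that $d^\pi w = \del^\pi w$ assigns $\zeta$ to $\del/\del\tau$ and $J\zeta$ to $\del/\del t$; and the Hermitian property of $\nabla^\pi$ yields $\nabla^\pi (J\eta) = J\nabla^\pi \eta$ for any section $\eta$ of $w^*\xi$.

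Next I compute the left-hand side of \eqref{eq:Laplacian-w} evaluated on $(\del/\del\tau, \del/\del t)$. Using the standard formula for the exterior covariant derivative of a bundle-valued one-form together with the vanishing bracket and the Hermitian property,
$$
d^{\nabla^\pi}(d^\pi w)\!\left(\tfrac{\del}{\del\tau},\tfrac{\del}{\del t}\right) = \nabla^\pi_\tau (J\zeta) - \nabla^\pi_t\zeta = J\nabla^\pi_\tau \zeta - \nabla^\pi_t \zeta.
$$
For the right-hand side I use the standard convention
$(\alpha\wedge\beta)(\del/\del\tau,\del/\del t) = \alpha(\del/\del\tau)\beta(\del/\del t) - \alpha(\del/\del t)\beta(\del/\del\tau)$ with
$\alpha = -w^*\lambda\circ j$ and $\beta = \tfrac{1}{2}(\CL_{X_\lambda}J)\del^\pi w$. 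The isothermal relations give $\alpha(\del/\del\tau) = -\lambda(\del w/\del t)$ and $\alpha(\del/\del t) = \lambda(\del w/\del\tau)$, whence
$$
-w^*\lambda\circ j \wedge \left(\tfrac{1}{2}(\CL_{X_\lambda}J)\del^\pi w\right)\!\left(\tfrac{\del}{\del\tau},\tfrac{\del}{\del t}\right) = -\tfrac{1}{2}\lambda\!\left(\tfrac{\del w}{\del t}\right)(\CL_{X_\lambda}J)J\zeta - \tfrac{1}{2}\lambda\!\left(\tfrac{\del w}{\del\tau}\right)(\CL_{X_\lambda}J)\zeta.
$$

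Finally I apply $-J$ to both sides of the equality $J\nabla^\pi_\tau\zeta - \nabla^\pi_t\zeta = (\text{RHS above})$. Differentiating $J^2 = -\id_\xi$ along $X_\lambda$ yields the anticommutation identity $J(\CL_{X_\lambda}J) = -(\CL_{X_\lambda}J)J$, so $J(\CL_{X_\lambda}J)J = (\CL_{X_\lambda}J)$ and $J(\CL_{X_\lambda}J)\zeta = -(\CL_{X_\lambda}J)J\zeta$. Applying these, the left side becomes $\nabla^\pi_\tau\zeta + J\nabla^\pi_t\zeta$, while the right side rearranges to $\tfrac{1}{2}\lambda(\del w/\del t)(\CL_{X_\lambda}J)\zeta - \tfrac{1}{2}\lambda(\del w/\del\tau)(\CL_{X_\lambda}J)J\zeta$, which is precisely \eqref{eq:main-eq-a}.

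There is no serious obstacle here; the whole argument is a book-keeping exercise in isothermal coordinates. The only spot where one must be mildly careful is the sign juggling with $j$ and with the anticommutation between $J$ and $\CL_{X_\lambda}J$; getting these right is what turns the sign pattern $(-\lambda(\del w/\del t), \lambda(\del w/\del\tau))$ appearing naturally from the wedge into the sign pattern $(+\lambda(\del w/\del t), -\lambda(\del w/\del\tau))$ in the stated formula.
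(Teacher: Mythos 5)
Your proof is correct and follows essentially the same route as the paper: evaluate the fundamental equation \eqref{eq:Laplacian-w} on the isothermal frame $(\del/\del\tau,\del/\del t)$, use $\pi\,\del w/\del t = J\zeta$ and $J$-linearity of $\nabla^\pi$, and then apply $J$ with the anticommutation $J(\CL_{X_\lambda}J) = -(\CL_{X_\lambda}J)J$ to rearrange. The only cosmetic difference is that you expand the wedge from $-w^*\lambda\circ j$ directly, while the paper first rewrites the right-hand side as $w^*\lambda\wedge\bigl(\tfrac12(\CL_{X_\lambda}J)J\del^\pi w\bigr)$; both bookkeepings land on the same intermediate identity.
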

\begin{proof}
We denote $\pi \frac{\del w}{\del t}$ by $\eta$. By the isothermality of the coordinate $(\tau,t)$, we
have $J \frac{\del}{\del \tau} = \frac{\del}{\del t}$. Using the $(j,J)$-linearity of $d^\pi w$, we derive
$$
\eta = dw^\pi (\frac{\del}{\del t}) = dw^\pi ( j\frac{\del}{\del \tau}) = J dw^\pi (\frac{\del}{\del \tau}) = J\zeta.
$$

Now we evaluate each side of \eqref{eq:Laplacian-w} against $(\frac{\del}{\del\tau}, \frac{\del}{\del t})$.
For the  left hand side, we get
$$
\nabla^\pi_\tau\eta -\nabla^\pi_t \zeta = \nabla^\pi_\tau J\zeta -\nabla^\pi_t \zeta = J\nabla^\pi_\tau \zeta -\nabla^\pi_t \zeta.
$$
For the right hand side, we get
\beastar
&{}& \frac{1}{2} \lambda(\frac{\del w}{\del \tau})(\CL_{X_\lambda}J) J\eta
-\frac{1}{2} \lambda(\frac{\del w}{\del t})(\CL_{X_\lambda}J) J\zeta\\
& = & - \frac{1}{2} \lambda(\frac{\del w}{\del \tau})(\CL_{X_\lambda}J) \zeta
-\frac{1}{2} \lambda(\frac{\del w}{\del t})(\CL_{X_\lambda}J) J\zeta
\eeastar
where we use the equation $\eta = J \zeta$ for the equality. By setting them equal and applying $J$
to the resulting equation using the fact that $\CL_{X_\lambda}J$ anti-commutes with $J$, we
obtain the equation.
\end{proof}

The fundamental equation in cylindrical coordinates $(\tau, t)\in [0,\infty)\times S^1$
plays an important role in the derivation of the exponential decay of the derivatives at cylindrical ends.
(See Part II of \cite{oh-wang2}.)

\begin{rem}
The fundamental equation in cylindrical coordinates is nothing but the linearization of the
contact Cauchy--Riemann equation in the direction $\frac{\del}{\del\tau}$.
\end{rem}

The following lemmas  will be needed in the calculation of
$\langle \Delta^{\nabla^\pi} d^\pi w, d^\pi w\rangle$ for contact Cauchy--Riemann maps
$d^\pi w=\del^\pi w$.

\begin{lem}\label{lem:2delta}
For any smooth map $w$, we have
\beastar
\langle d^{\nabla^\pi}  \delta^{\nabla^\pi}\del^\pi w, \del^\pi w\rangle=\langle \delta^{\nabla^\pi}d^{\nabla^\pi}  \del^\pi w, \del^\pi w\rangle.
\eeastar
As a consequence,
\bea
\langle \Delta^{\nabla^\pi} \del^\pi w, \del^\pi w\rangle=2\langle \delta^{\nabla^\pi}d^{\nabla^\pi}  \del^\pi w, \del^\pi w\rangle.\label{eq:2Delta}
\eea
\end{lem}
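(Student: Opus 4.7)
The plan is to prove this as a pointwise identity by direct calculation in local isothermal coordinates, exploiting the fact that $\partial^\pi w$ is by definition a $(1,0)$-form, i.e., $J$-linear with respect to $(j,J)$, which is the only structural property of $\alpha := \partial^\pi w$ that is needed.

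First I would fix a point $z_0 \in \dot\Sigma$, choose isothermal coordinates $(\tau,t)$ at $z_0$ with the conformal factor normalized to $1$ at $z_0$, and write
\[
\alpha := \partial^\pi w = A\, d\tau + B\, dt, \qquad A, B \in \Gamma(w^*\xi).
\]
The $J$-linearity condition $\alpha\circ j = J\alpha$, together with $j(\partial_\tau) = \partial_t$, immediately forces $B = JA$. Next I would compute $d^{\nabla^\pi}\alpha$ and $\delta^{\nabla^\pi}\alpha$ in these coordinates, using $\nabla^\pi J = 0$ (Axiom (4) of Theorem \ref{thm:connection}):
\begin{align*}
d^{\nabla^\pi}\alpha &= (\nabla^\pi_\tau B - \nabla^\pi_t A)\, d\tau\wedge dt = (J\nabla^\pi_\tau A - \nabla^\pi_t A)\, d\tau\wedge dt, \\
\delta^{\nabla^\pi}\alpha &= -(\nabla^\pi_\tau A + \nabla^\pi_t B) = -(\nabla^\pi_\tau A + J\nabla^\pi_t A).
\end{align*}
Applying $d^{\nabla^\pi}$ to the second expression and $\delta^{\nabla^\pi} = -* d^{\nabla^\pi} *$ to the first yields $w^*\xi$-valued $1$-forms; I would then pair each with $\alpha = A\, d\tau + JA\, dt$, using repeatedly the $J$-orthogonality $\langle JX, JY\rangle = \langle X, Y\rangle$ (equivalently, $J$-antisymmetry $\langle JX, Y\rangle + \langle X, JY\rangle = 0$). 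After systematic simplification, both pairings collapse to the same expression
\[
-\langle \nabla^\pi_\tau\nabla^\pi_\tau A + \nabla^\pi_t\nabla^\pi_t A,\, A\rangle + \bigl\langle J\bigl(\nabla^\pi_t\nabla^\pi_\tau - \nabla^\pi_\tau\nabla^\pi_t\bigr)A,\; A\bigr\rangle,
\]
proving the first equality. The consequence \eqref{eq:2Delta} then follows immediately from the definition $\Delta^{\nabla^\pi} = d^{\nabla^\pi}\delta^{\nabla^\pi} + \delta^{\nabla^\pi}d^{\nabla^\pi}$.

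The main observation is that \emph{no} curvature cancellation or appeal to K\"ahler identities is required: the curvature commutator piece $J[\nabla^\pi_t, \nabla^\pi_\tau]A$ appears identically on both sides as soon as one substitutes $B = JA$ and commutes $J$ past the connection. The only care needed is sign bookkeeping when applying $\delta^{\nabla^\pi}$ and correctly tracking which derivative falls on the $J$-transformed component of $\alpha$; the Hermitian property $\nabla^\pi J = 0$ is what allows all $J$'s to be freely moved past the connection so that the $J$-antisymmetry can produce the required cancellations between the mixed second-derivative cross terms.
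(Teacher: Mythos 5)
Your strategy---reducing to a pointwise identity in isothermal coordinates, using only the $(1,0)$-property $\alpha\circ j = J\alpha$ of $\alpha:=\del^\pi w$ and the Hermitian property $\nabla^\pi J = 0$---is sound, and the final common expression
$-\langle \nabla^\pi_\tau\nabla^\pi_\tau A + \nabla^\pi_t\nabla^\pi_t A, A\rangle + \langle J(\nabla^\pi_t\nabla^\pi_\tau - \nabla^\pi_\tau\nabla^\pi_t)A, A\rangle$
is indeed what both pairings reduce to. This is a genuinely different route from the paper's: the paper never introduces coordinates, but manipulates the Hodge star directly, using $\delta^{\nabla^\pi}=-*d^{\nabla^\pi}*$ on a surface, $*\alpha = -\alpha\circ j = -J\alpha$, the fact that $*$ commutes with $J$, and the $J$-linearity of $\nabla^\pi$ to move $J$ and $*$ across the $L^2$ pairing until the two sides match. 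The coordinate route makes the cancellation structure (including the explicit curvature commutator) visible, while the paper's identity-chain is shorter and avoids all conformal-factor bookkeeping.

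Which brings up the one genuine gap in your write-up. Your formula $\delta^{\nabla^\pi}\alpha = -(\nabla^\pi_\tau A + J\nabla^\pi_t A)$ is only correct \emph{at} $z_0$; in a neighborhood one has $\delta^{\nabla^\pi}\alpha = -e^{-2u}(\nabla^\pi_\tau A + J\nabla^\pi_t A)$, and the Hodge star applied to a $2$-form inside $\delta^{\nabla^\pi}d^{\nabla^\pi}\alpha$ likewise carries an $e^{-2u}$. Since your next step applies $d^{\nabla^\pi}$, which differentiates, you pick up first-order terms in $du(z_0)$ on both sides that you have not verified cancel. They do, and there are two clean ways to close the gap: (i) normalize the isothermal chart further so that $du(z_0)=0$ (always achievable by a quadratic adjustment of the holomorphic coordinate), after which the written computation is literally valid; or (ii) observe that the extra conformal terms are inner products of $A$ and $JA$ against $g:=J\nabla^\pi_\tau A - \nabla^\pi_t A$ on one side and against $\Psi := \nabla^\pi_\tau A + J\nabla^\pi_t A$ on the other, and since $g = J\Psi$ together with $J$-antisymmetry forces $\langle g, JA\rangle = \langle \Psi, A\rangle$ and $\langle g, A\rangle = -\langle\Psi, JA\rangle$, they cancel identically. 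Either remedy suffices, but one of them must be stated for the pointwise reduction to be airtight.
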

\begin{proof}
\bea
\langle \delta^{\nabla^\pi}d^{\nabla^\pi}  \del^\pi w, \del^\pi w\rangle &=&-\langle *d^{\nabla^\pi}  * d^{\nabla^\pi}  \del^\pi w, \del^\pi w\rangle\nonumber\\
&=&-\langle d^{\nabla^\pi} * d^{\nabla^\pi}  \del^\pi w, *\del^\pi w\rangle\nonumber\\
&=&-\langle d^{\nabla^\pi} * d^{\nabla^\pi}  \del^\pi w, -\del^\pi w\circ j\rangle\label{eq:dstard1}\\
&=&\langle d^{\nabla^\pi} * d^{\nabla^\pi}  \del^\pi w, J \del^\pi w\rangle\nonumber\\
&=&-\langle J d^{\nabla^\pi} * d^{\nabla^\pi}  \del^\pi w,  \del^\pi w\rangle\nonumber\\
&=&-\langle d^{\nabla^\pi} * d^{\nabla^\pi}  J\del^\pi w,  \del^\pi w\rangle\label{eq:dstard3}\\
&=&-\langle d^{\nabla^\pi}  * d^{\nabla^\pi}  \del^\pi w\circ j,  \del^\pi w\rangle\nonumber\\
&=&\langle d^{\nabla^\pi} * d^{\nabla^\pi} * \del^\pi w,  \del^\pi w\rangle\label{eq:dstard5}\\
&=&\langle d^{\nabla^\pi}  \delta^{\nabla^\pi}\del^\pi w, \del^\pi w\rangle.\nonumber
\eea
Here for \eqref{eq:dstard1} and \eqref{eq:dstard5}, we use $*\alpha=-\alpha\circ j$ for any $1$-form $\alpha$.
For \eqref{eq:dstard3}, we use the fact that the connection is $J$-linear.
\end{proof}

The following formula expresses $\langle\Delta^{\nabla^\pi}d^\pi w, d^\pi w\rangle$, which involves the third derivative of $w$,
in terms of terms involving derivatives of order at most two.

\begin{lem}\label{lem:laplaceproduct}
For any contact Cauchy--Riemann map $w$,
\be\label{eq:-Laplaciandpiw}
-\langle \Delta^{\nabla^\pi}d^\pi w, d^\pi w\rangle
= \langle\delta^{\nabla^\pi}[(w^*\lambda\circ j)\wedge (\CL_{X_\lambda}J)\del^\pi w], d^\pi w\rangle.
\ee
Furthermore we can write
\bea\label{eq:deltapi<>}
&{}&
\delta^{\nabla^\pi}[(w^*\lambda\circ j)\wedge (\CL_{X_\lambda}J)\del^\pi w]\nonumber\\
&=&-*\langle (\nabla^\pi(\CL_{X_\lambda}J))\del^\pi w, w^*\lambda\rangle \nonumber\\
&&
-*\langle (\CL_{X_\lambda}J)\nabla^\pi\del^\pi w, w^*\lambda\rangle
-*\langle (\CL_{X_\lambda}J)\del^\pi w, \nabla w^*\lambda\rangle.
\eea
\end{lem}
\begin{proof} The first equality \eqref{eq:-Laplaciandpiw} immediately follows from the fundamental equation,
Theorem \ref{thm:Laplacian-w}, and \eqref{eq:2Delta} of Lemma \ref{lem:2delta}  for contact Cauchy--Riemann maps.

For the second equality \eqref{eq:deltapi<>}, using the identities $\delta^{\nabla^\pi} = - *d^{\nabla^\pi}*$
for two-forms and $*\alpha = -\alpha \circ j$ for general one-form $\alpha$, we rewrite
\beastar
\delta^{\nabla^\pi}[(w^*\lambda\circ j)\wedge (\CL_{X_\lambda}J)\del^\pi w]=
 -*d^{\nabla^\pi}*[(\CL_{X_\lambda}J)\del^\pi w\wedge (*w^*\lambda)],
\eeastar
and then apply the definition of the Hodge $*$ (see Appendix \ref{appen:forms})
to the expression $*[(\CL_{X_\lambda}J)\del^\pi w\wedge (*w^*\lambda)]$, and get
\beastar
&&\delta^{\nabla^\pi}[(w^*\lambda\circ j)\wedge (\CL_{X_\lambda}J)\del^\pi w]\\
&=&-*d^{\nabla^\pi}\langle (\CL_{X_\lambda}J)\del^\pi w, w^*\lambda\rangle\\
&=&-*\langle (\nabla^\pi(\CL_{X_\lambda}J))\del^\pi w, w^*\lambda\rangle
-*\langle (\CL_{X_\lambda}J)\nabla^\pi\del^\pi w, w^*\lambda\rangle
-*\langle (\CL_{X_\lambda}J)\del^\pi w, \nabla w^*\lambda\rangle.
\eeastar
This finishes the proof.
\end{proof}

Here in the above lemma $\langle\cdot, \cdot\rangle$ denotes the inner product induced from $h$, i.e.,
$\langle\alpha_1\otimes\zeta, \alpha_2\rangle:=h(\alpha_1, \alpha_2)\zeta$,
for any $\alpha_1, \alpha_2\in \Omega^k(P)$ and $\zeta$ a section of $E$. This inner product should not be confused with the inner product of the vector bundles.

By applying $\delta^{\nabla^\pi}$ to \eqref{eq:Laplacian-w} and the resulting expression of
$\Delta^{\nabla^\pi}(d^\pi w) = \Delta^{\nabla^\pi}(\del^\pi w)$ thereinto and \eqref{eq:-Laplaciandpiw}, we can convert
the Weitzenb\"ock formula \eqref{eq:bochner-weitzenbock-e} into
\bea\label{eq:e-pi-weitzenbock}
-\frac{1}{2}\Delta e^\pi(w)&=&|\nabla^\pi (\del^\pi w)|^2+K|\del^\pi w|^2+\langle \ric^{\nabla^\pi} (\del^\pi w), \del^\pi w\rangle\nonumber\\
&{}&+\langle \delta^{\nabla^{\pi}}[(w^*\lambda\circ j)\wedge (\CL_{X_\lambda}J)\del^\pi w], \del^\pi w\rangle
\eea
for any contact Cauchy--Riemann map, i.e., any map $w$ satisfying $\delbar^\pi w = 0$.

\section{A priori estimates for contact instantons}

In this section, we derive some basic estimates for the (full) energy density
$|dw|^2$ of contact instantons $w$.
These estimates are important for the derivation of local regularity and
$\epsilon$-regularity needed for the compactification of certain moduli space.
(See \cite{oh:sigmamodel} for  further study along this lines.)

\subsection{$W^{2,2}$-estimates}

Recall from the last section that we have derived the following identity
\bea\label{eq:e-pi-weitzenbock}
-\frac{1}{2}\Delta e^\pi(w)&=&|\nabla^\pi (\del^\pi w)|^2+K|\del^\pi w|^2+\langle \ric^{\nabla^\pi} (\del^\pi w), \del^\pi w\rangle
\nonumber\\
&{}&+\langle \delta^{\nabla^\pi}[(w^*\lambda\circ j)\wedge (\CL_{X_\lambda}J)\del^\pi w], \del^\pi w\rangle.
\eea

By \eqref{eq:deltapi<>}, the first entry in
$\langle \delta^{\nabla^\pi}[(w^*\lambda\circ j)\wedge (\CL_{X_\lambda}J)\del^\pi w], \del^\pi w\rangle$
can be written as
\bea\label{eq:delta-ident}
&{}&\delta^{\nabla^\pi}[(w^*\lambda\circ j)\wedge (\CL_{X_\lambda}J)\del^\pi w]\nonumber\\
&=&-*\langle (\nabla^\pi(\CL_{X_\lambda}J))\del^\pi w, w^*\lambda\rangle
-*\langle (\CL_{X_\lambda}J)\nabla^\pi\del^\pi w, w^*\lambda\rangle
-*\langle (\CL_{X_\lambda}J)\del^\pi w, \nabla w^*\lambda\rangle.\nonumber\\
\eea

Hence we get a bound for the last term $\langle \delta^{\nabla^\pi}[(w^*\lambda\circ j)\wedge (\CL_{X_\lambda}J)\del^\pi w], \del^\pi w\rangle$
by
\beastar
&{}&|\langle \delta^{\nabla^\pi}[(w^*\lambda\circ j)\wedge (\CL_{X_\lambda}J)\del^\pi w], \del^\pi w\rangle|\\
&\leq&
\|\nabla^\pi(\CL_{X_\lambda}J)\|_{C^0(M)}|dw|^4\\
&{}&+|\langle (\CL_{X_\lambda}J)\nabla^\pi(\del^\pi w), w^*\lambda\rangle||dw|
+|\langle (\CL_{X_\lambda}J)\del^\pi w, \nabla w^*\lambda\rangle||dw|.
\eeastar
We further bound the last two terms of \eqref{eq:delta-ident} via
\beastar
|\langle (\CL_{X_\lambda}J)\nabla^\pi(\del^\pi w), w^*\lambda\rangle||dw|&\leq&
\|\CL_{X_\lambda}J\|_{C^0(M)}|\nabla^\pi (\del^\pi w)||dw|^2\\
&\leq&\frac{1}{2c}|\nabla^\pi (\del^\pi w)|^2+\frac{c}{2}\|\CL_{X_\lambda}J\|^2_{C^0(M)}|dw|^4\\
\text{and }\quad \quad \quad\quad\quad\quad\quad\quad\quad\quad\quad\quad\quad\quad&{}&\\
|\langle (\CL_{X_\lambda}J)\del^\pi w, \nabla w^*\lambda\rangle||dw|
&\leq&\frac{1}{2c}|\nabla w^*\lambda|^2+\frac{c}{2}\|\CL_{X_\lambda}J\|^2_{C^0(M)}|dw|^4
\eeastar
similarly. Here $c$ is any positive constant.

Finally, we get the upper bound for
\bea\label{eq:laplacian-pi-upper}
&{}&|\langle \delta^{\nabla^\pi}[(w^*\lambda\circ j)\wedge (\CL_{X_\lambda}J)\del^\pi w], \del^\pi w\rangle|\nonumber\\
&\leq&
\frac{1}{2c}\left(|\nabla^\pi (\del^\pi w)|^2+ |\nabla w^*\lambda|^2\right)
+\left(c\|\CL_{X_\lambda}J\|^2_{C^0(M)}+\|\nabla^\pi(\CL_{X_\lambda}J)\|_{C^0(M)}\right)|dw|^4\nonumber\\
\eea
for any contact Cauchy--Riemann map $w$.

Now we consider contact instantons which are Cauchy--Riemann maps satisfying $\delta w^*\lambda=0$ in addition.
Using the Bochner--Weitzenb\"ock formula (applied to differential forms on a Riemann surface), we get the following identity
\be\label{eq:e-lambda-weitzenbock}
-\frac{1}{2}\Delta|w^*\lambda|^2=|\nabla w^*\lambda|^2+K|w^*\lambda|^2-\langle \Delta (w^*\lambda), w^*\lambda\rangle.
\ee
Write
$$
\Delta (w^*\lambda)=d\delta (w^*\lambda)+\delta d(w^*\lambda),
$$
in which the first term vanishes since $\delta w^*\lambda = - d(w^*\lambda \circ j) = 0$.
Then
\beastar
\langle \Delta (w^*\lambda), w^*\lambda\rangle&=&\langle \delta d(w^*\lambda), w^*\lambda\rangle\\
&=&-\frac{1}{2}\langle *d|\del^\pi w|^2, w^*\lambda\rangle\\
&=&-\langle *\langle \nabla^\pi \del^\pi w, \del^\pi w\rangle,  w^*\lambda\rangle.
\eeastar
Similarly as in the previous estimates for the Laplacian term of $\del^\pi w$, we can bound
\bea\label{eq:laplacian-lambda-upper}
|-\langle \Delta (w^*\lambda), w^*\lambda\rangle|&=&|\langle *\langle \nabla^\pi \del^\pi w, \del^\pi w\rangle,  w^*\lambda\rangle|\nonumber\\
&\leq& |\nabla^\pi \del^\pi w||dw|^2\nonumber\\
&\leq& \frac{1}{2c}|\nabla^\pi \del^\pi w|^2+\frac{c}{2}|dw|^4.
\eea

At last, we calculate the total energy density which is defined as
$$
e(w):=|dw|^2=e^\pi(w)+|w^*\lambda|^2.
$$
Summing up \eqref{eq:e-pi-weitzenbock} and \eqref{eq:e-lambda-weitzenbock}, and applying the estimates \eqref{eq:laplacian-pi-upper}
and \eqref{eq:laplacian-lambda-upper} respectively, we obtain the following inequality for any contact instanton $w$
\bea\label{eq:laplace-e-derivative}
&{}& -\frac{1}{2}\Delta e(w)\nonumber\\
&\geq& \left(1-\frac{1}{c}\right)|\nabla^\pi(\del^\pi w)|^2+\left(1-\frac{1}{2c}\right)|\nabla w^*\lambda|^2\nonumber\\
&{}&- \left(c\|\CL_{X_\lambda}J\|^2_{C^0(M)}+\|\nabla^\pi(\CL_{X_\lambda}J)\|_{C^0(M)}+\frac{c}{2}+\|\ric\|_{C^0(M)} \right)e(w)^2
+Ke(w)\nonumber\\
\\
&\geq& - \left(c\|\CL_{X_\lambda}J\|^2_{C^0(M)}+\|\nabla^\pi(\CL_{X_\lambda}J)\|_{C^0(M)}
+\frac{c}{2}+\|\ric\|_{C^0(M)} \right)e(w)^2+Ke(w),\nonumber
\eea
for any $c>1$. We fix $c=2$ and get the following

\begin{thm} For a contact instanton $w$, we have the following differential inequality
$$
\Delta e(w)\leq Ce(w)^2+\|K\|_{L^\infty(\dot\Sigma)}e(w),
$$
where
$$
C=2\|\CL_{X_\lambda}J\|^2_{C^0(M)}+\|\nabla^\pi(\CL_{X_\lambda}J)\|_{C^0(M)}+\|\ric\|_{C^0(M)}+1
$$
which is a positive constant independent of $w$.
\end{thm}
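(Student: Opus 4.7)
The plan is to decompose the total energy density as $e = e^\pi + |w^*\lambda|^2$ and to establish a Bochner-Weitzenb\"ock inequality for each summand separately, then add them. For $e^\pi$ I would start from the Weitzenb\"ock formula \eqref{eq:e-pi-weitzenbock} already derived via the fundamental equation (Theorem \ref{thm:Laplacian-w}); the only non-trivial contribution there is the $\delta^{\nabla^\pi}$ term, which when expanded via the product rule yields three pieces, each schematically of the form (first covariant derivative of $dw$) times (undifferentiated quadratic in $dw$). For $|w^*\lambda|^2$ I would use the classical Bochner identity for $1$-forms on the Riemann surface, observing that the contact instanton equation $d(w^*\lambda\circ j) = 0$ is exactly $\delta w^*\lambda = 0$; the Hodge Laplacian of $w^*\lambda$ therefore collapses to $\delta d(w^*\lambda)$, which by Proposition \ref{prop:energy-omegaarea}(2) equals $\delta\bigl(\tfrac12 |\del^\pi w|^2\, dA\bigr)$ and contributes another cross term coupling $\nabla^\pi \del^\pi w$ to $w^*\lambda$.

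The key analytical step is to absorb these cross terms. Each is bounded by Cauchy-Schwarz by $|\nabla(\cdot)|\,|dw|^2$, which is of critical order: Young's inequality with a free parameter $c > 0$ converts it into $\tfrac{1}{2c}|\nabla(\cdot)|^2 + \tfrac{c}{2}|dw|^4$. Summing the two Bochner identities, the $\tfrac{1}{2c}|\nabla(\cdot)|^2$ pieces can be absorbed into the genuinely positive $|\nabla^\pi(\del^\pi w)|^2$ and $|\nabla w^*\lambda|^2$ from the main Bochner terms provided $c \geq 1$; the remaining non-negative gradient squares can then be discarded. What remains is an inequality of the shape
\[
-\tfrac{1}{2}\Delta e \;\geq\; -\widetilde C(c)\, e^2 + K\, e,
\]
with $\widetilde C(c)$ an explicit polynomial in $c$ whose coefficients depend only on $\|\CL_{X_\lambda}J\|_{C^0(M)}$, $\|\nabla^\pi(\CL_{X_\lambda}J)\|_{C^0(M)}$, and $\|\ric^{\nabla^\pi}\|_{C^0(M)}$. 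Choosing $c$ to balance the two $c$-dependent contributions (the clean choice being $c = 2$) yields the explicit constant in the statement, and bounding $K$ pointwise by $\|K\|_{L^\infty(\dot\Sigma)}$ finishes the proof.

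The main obstacle I anticipate is the combinatorics of distributing the Young parameter: the expansion of $\delta^{\nabla^\pi}[(w^*\lambda\circ j)\wedge(\CL_{X_\lambda}J)\del^\pi w]$ produces three distinct cross terms, and together with the extra cross term coming from $\delta d(w^*\lambda)$ one must ensure that the total coefficient consumed from $|\nabla^\pi(\del^\pi w)|^2$ (and from $|\nabla w^*\lambda|^2$) stays strictly below the unit coefficient supplied by the Bochner identity. Once this bookkeeping is handled correctly the remaining manipulations are purely algebraic, and the fact that the output constant $C$ depends only on intrinsic contact-triad data and not on $w$ is automatic from the construction.
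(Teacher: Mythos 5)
Your proposal is correct and follows essentially the same route as the paper: decompose $e = e^\pi + |w^*\lambda|^2$, apply the Weitzenb\"ock formula \eqref{eq:e-pi-weitzenbock} (driven by the fundamental equation) to the first piece and the classical Bochner formula for $1$-forms (with $\delta w^*\lambda = 0$ and $dw^*\lambda = \tfrac12|d^\pi w|^2\,dA$) to the second, absorb the cross terms by Young's inequality with a free parameter $c>1$, and fix $c=2$. The only minor slip is the accounting of cross terms in the $\delta^{\nabla^\pi}$ expansion: of its three pieces only two actually contain a covariant derivative of $dw$ (the third carries $\nabla^\pi(\CL_{X_\lambda}J)$, a derivative of a fixed tensor, and is bounded directly without Young), so the total to be absorbed is three cross terms rather than four — but this does not affect the argument, as the coefficients consumed from $|\nabla^\pi(\del^\pi w)|^2$ and $|\nabla w^*\lambda|^2$ come out to $1-\tfrac1c$ and $1-\tfrac1{2c}$ respectively, both nonnegative for $c\geq 1$, exactly as you anticipate.
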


Once we have this differential inequality,  we obtain the following interior density estimates
by the standard  argument from \cite{schoen}. (Also see the proof of \cite[Theorem 8.1.3]{oh:book}
given in the context of pseudoholomorphic curves.)
\begin{cor}[$\epsilon$-regularity and interior density estimate]\label{density}
There exist constants $C, \, \e_0$ and $r_0 > 0$, depending only on $J$ and
the Hermitian metric $h$ on $\dot \Sigma$, such that for any
 $C^1$ contact instanton $w: \dot \Sigma \to M$ with
$$
E(r_0): = \frac{1}{2}\int_{D(r_0)} |dw|^2 \leq \e_0,
$$
and discs $D(2r) \subset \operatorname{Int}\Sigma$ with $0 < 2r \leq r_0$,
$w$ satisfies
\be\label{eq:schoen's}
\max_{\sigma \in (0,r]} \left(\sigma^2 \sup_{D(r-\sigma)}
e(w)\right) \leq CE(r)
\ee
for all $0< r \leq r_0$. In particular, letting $\sigma = r/2$, we obtain
\be\label{eq:supeu}
\sup_{D(r/2)} |dw|^2 \leq \frac{4C E(r)}{r^2}
\ee
for all $r \leq r_0$.
\end{cor}

Now we rewrite $\eqref{eq:laplace-e-derivative}$ into
\bea\label{eq:laplace-higherderivative}
&{}&\left(1-\frac{1}{c}\right)|\nabla^\pi(\del^\pi w)|^2+\left(1-\frac{1}{2c}\right)|\nabla w^*\lambda|^2\nonumber\\
&\leq&-\frac{1}{2}\Delta e(w) - Ke(w) \nonumber\\
& {}& \quad +\left(c\|\CL_{X_\lambda}J\|^2_{C^0(M)}+\|\nabla^\pi(\CL_{X_\lambda}J)\|_{C^0(M)}+\frac{c}{2}+\|\ric\|_{C^0(M)} \right)e^2
\eea

We want to get a coercive $L^2$ bound for $\nabla dw$, which consists of the two parts given below according to the decomposition
$dw=d^\pi w+w^*\lambda\otimes X_\lambda$.
\be\label{eq:|nabladw|2-1}
|\nabla dw|^2 = |\nabla(d^\pi w)+ \nabla(w^*\lambda\otimes X_\lambda)|^2
\leq 2 |\nabla(d^\pi w)|^2 + 2 |\nabla(w^*\lambda\otimes X_\lambda)|^2.
\ee
For the first term on the right hand side of \eqref{eq:|nabladw|2-1}, we write
\bea
|\nabla(d^\pi w)|^2&=&|\nabla^\pi (d^\pi w)|^2+|\langle \nabla(d^\pi w), X_\lambda\rangle|^2\nonumber\\
&=&|\nabla^\pi (d^\pi w)|^2+\frac{1}{4}|\langle d^\pi w, (\CL_{X_\lambda}J)J d^\pi w\rangle|^2\label{eq:nabla-dpi1}\\
&\leq&|\nabla^\pi (d^\pi w)|^2+\frac{1}{4}|\CL_{X_\lambda}J|^2|d^\pi w|^4\nonumber\\
&\leq&|\nabla^\pi (d^\pi w)|^2+\frac{1}{4}\|\CL_{X_\lambda}J\|^2_{C^0(M)}|d^\pi w|^4,\nonumber
\eea
where \eqref{eq:nabla-dpi1} comes from the metric property of the contact triad connection together with
\eqref{eq:nablaYX}.

For the second term on the right hand side of \eqref{eq:|nabladw|2-1}, we again apply \eqref{eq:nablaYX}
and derive
\beastar
|\nabla(w^*\lambda\otimes X_\lambda)|^2&=&|(\nabla w^*\lambda)\otimes X_\lambda+(w^*\lambda)\otimes\nabla X_\lambda|^2\\
&=&|\nabla w^*\lambda|^2+|w^*\lambda|^2|\frac{1}{2}(\CL_{X_\lambda}J)Jd^\pi w|^2\\
&\leq&|\nabla w^*\lambda|^2+\frac{1}{4}\|\CL_{X_\lambda}J\|^2_{C^0(M)}|w^*\lambda|^2|d^\pi w|^2.
\eeastar

Summing up the two terms and going back to \eqref{eq:|nabladw|2-1}, we get
\bea\label{eq:higherorder1}
|\nabla(dw)|^2&\leq& 2|\nabla^\pi (d^\pi w)|^2+2|\nabla w^*\lambda|^2\nonumber\\
&{}&+\frac{1}{2}\|\CL_{X_\lambda}J\|^2_{C^0(M)}|d^\pi w|^4+\frac{1}{2}\|\CL_{X_\lambda}J\|^2_{C^0(M)}|w^*\lambda|^2|d^\pi w|^2.
\eea

Hence from this, we have
\beastar
|\nabla(dw)|^2&\leq&
\frac{2}{1-\frac{1}{c}}\left[\left(1-\frac{1}{c}\right)|\nabla^\pi(\del^\pi w)|^2+\left(1-\frac{1}{2c}\right)|\nabla w^*\lambda|^2 \right]\\
&{}&+\|\CL_{X_\lambda}J\|^2_{C^0(M)}|d w|^4
\eeastar
and combine it with \eqref{eq:laplace-higherderivative}, we get
\beastar
& {}& |\nabla(dw)|^2\\
&\leq&
\left[(\frac{2c^2}{c-1}+1)\|\CL_{X_\lambda}J\|^2_{C^0(M)}+\frac{2c}{c-1}
\left(\|\nabla^\pi(\CL_{X_\lambda}J)\|_{C^0(M)}+\frac{c}{2}+\|\ric\|_{C^0(M)} \right)\right]|dw|^4\\
&{}&-\frac{2c\cdot K}{c-1}|dw|^2
+\frac{c}{1-c}\Delta e(w)
\eeastar
for any constant $c>1$. We still take $c=2$ and get the following coercive estimate for contact instantons
\be\label{eq:higher-derivative}
|\nabla(dw)|^2\leq C_1|dw|^4-4K|dw|^2-2\Delta e(w)
\ee
where
$$
C_1:=9\|\CL_{X_\lambda}J\|^2_{C^0(M)}+4\|\nabla^\pi(\CL_{X_\lambda}J)\|_{C^0(M)}+4\|\ric\|_{C^0(M)}+4
$$
denotes a constant.

The following local a priori estimate can be easily derived from \eqref{eq:higher-derivative}
by the standard usage of cut-off function. We give its proof in Appendix \ref{appen:local-coercive}.
\begin{prop}\label{prop:coercive-L2}
For any pair of domains $D_1$ and $D_2$ in $\dot\Sigma$ such that $\overline{D_1}\subset D_2$,
$$
\|\nabla(dw)\|^2_{L^2(D_1)}\leq C_1(D_1, D_2)\|dw\|^2_{L^2(D_2)}+C_2(D_1, D_2)\|dw\|^4_{L^4(D_2)}
$$
for any contact instanton $w$,
where $C_1(D_1, D_2)$, $C_2(D_1, D_2)$ are some constants which depend on $D_1$, $D_2$ and $(M, \lambda, J)$, but are independent of $w$.
\end{prop}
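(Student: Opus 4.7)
My plan is to convert the pointwise Bochner-type inequality
$$|\nabla(dw)|^2 \leq C_1 |dw|^4 - 4K|dw|^2 - 2\Delta e$$
already established in \eqref{eq:higher-derivative} into an integrated coercive estimate by testing against the square of a smooth cutoff function. The only serious analytic work has been done in deriving \eqref{eq:higher-derivative}, so what remains is a standard cutoff argument.

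First I would pick a smooth function $\chi: \dot\Sigma \to [0,1]$ with $\chi \equiv 1$ on $D_1$ and $\operatorname{supp}\chi \subset \Int(D_2)$, whose gradient and Hessian are bounded by constants depending only on the pair $(D_1, D_2)$. Multiplying \eqref{eq:higher-derivative} by $\chi^2$, integrating over $\dot\Sigma$, and using $\chi \leq 1$, the first two terms on the right are controlled directly:
$$\int_{\dot\Sigma}\chi^2 |\nabla(dw)|^2 \leq C_1 \int_{D_2} |dw|^4 + 4\|K\|_{C^0(D_2)} \int_{D_2} |dw|^2 - 2\int_{\dot\Sigma}\chi^2 \,\Delta e.$$
Since $\chi \equiv 1$ on $D_1$, the left-hand side bounds $\|\nabla(dw)\|_{L^2(D_1)}^2$ from above.

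The one term needing actual work is $-2\int \chi^2 \Delta e$. Because $\chi^2$ is compactly supported in $\Int(D_2)$ and the Hodge Laplacian on functions is formally self-adjoint on the Riemann surface, I would integrate by parts twice to transfer the Laplacian onto the cutoff:
$$-2\int_{\dot\Sigma}\chi^2 \Delta e = -2\int_{\dot\Sigma} e\cdot \Delta(\chi^2) = -2\int_{\dot\Sigma} e\cdot (2\chi\Delta\chi - 2|\nabla\chi|^2),$$
whose absolute value is bounded by some $C(\chi) = C(D_1,D_2)$ times $\int_{D_2}|dw|^2$. Collecting terms yields exactly the claimed bound with $C_1(D_1,D_2) = 4\|K\|_{C^0(D_2)} + 2C(\chi)$ and $C_2(D_1,D_2) = C_1$.

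I do not anticipate any essential obstacle. The only points requiring a bit of care are keeping the sign conventions for the Hodge Laplacian consistent with those used in the Weitzenb\"ock identity \eqref{eq:e-pi-weitzenbock}, and verifying that the constants produced indeed have the stated dependence on $(D_1,D_2)$, on the ambient tensors $\|\CL_{X_\lambda}J\|_{C^0}$, $\|\nabla^\pi(\CL_{X_\lambda}J)\|_{C^0}$, $\|\ric^{\nabla^\pi}\|_{C^0}$, and on $\|K\|_{C^0(D_2)}$, but not on $w$ itself.
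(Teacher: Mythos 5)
Your proof is correct and follows the same overall strategy as the paper (multiply the pointwise Bochner inequality \eqref{eq:higher-derivative} by a compactly supported cutoff and integrate), but you handle the Laplacian term differently. The paper multiplies by $\chi$ itself, integrates by parts once to obtain $\int d\chi \wedge *de$, then bounds $|de| \le 2|\nabla(dw)||dw|$ and uses Young's inequality; this reintroduces a $\tfrac{1}{\epsilon}\int|\nabla(dw)|^2$ term on the right-hand side, which is then \emph{absorbed} into the left side by taking $\epsilon$ large. You instead multiply by $\chi^2$ and integrate by parts \emph{twice}, transferring the full Laplacian onto the cutoff so that $-2\int \chi^2\Delta e = -2\int e\,\Delta(\chi^2)$, which is bounded directly by $C(D_1,D_2)\,\|dw\|^2_{L^2(D_2)}$ with no absorption step. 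Both are valid since $w$ is smooth (so $e=|dw|^2$ is smooth) and $\chi^2$ has compact support in $\Int(D_2)\subset\dot\Sigma$, so no boundary terms survive. Your route is slightly cleaner in that it avoids the choice of $\epsilon$; the paper's route avoids needing the second derivatives of the cutoff, which is immaterial here since one can always build $\chi$ with $C^2$ bounds depending only on $(D_1,D_2)$. One small bookkeeping note: your formula $\Delta(\chi^2)=2\chi\Delta\chi-2|\nabla\chi|^2$ is the correct one for the Hodge (positive) Laplacian used in \eqref{eq:e-pi-weitzenbock}, so the sign convention is consistent.
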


We remark that this proposition is nothing but a re-statement of Theorem \ref{thm:local-W12} in the introduction.

\subsection{Local $W^{2+k,2}$ estimates for $k \geq 1$}

Starting from the above $W^{2,2}$-estimate, we proceed to higher $W^{2+k,2}$-estimates
inductively. For this purpose, we consider the decomposition
$$
dw = d^\pi w + w^*\lambda\otimes X_\lambda
$$
and estimate $|\nabla^{k+1} d w|$ inductively staring from $k=0$ which is done in the previous subsection.

The rest of this subsection will be occupied by the proof of the following theorem.

\begin{thm}\label{thm:local-higher-regularity}
Let $w$ be a contact instanton.
Then for any pair of domains $D_1 \subset D_2 \subset \dot \Sigma$ such that $\overline{D_1}\subset D_2$, we have
$$
\int_{D_1} |(\nabla)^{k+1}(dw)|^2 \leq \int_{D_2} \CJ_{k}(d^\pi w, w^*\lambda).
$$
Here $\CJ_k$ is a polynomial function of degree up to $2k+4$ with nonnegative coefficients  of the norms of the covariant derivatives
of $d^\pi w, \, w^*\lambda$ up to $0, \, \ldots, k$ with degree at most $2k + 4$
whose coefficients depending on $J$, $\lambda$ and $D_1, \, D_2$ but independent of $w$.
\end{thm}

We start with the following lemma
\begin{lem}\label{lem:nablad}For any $k\geq 0$,
$$
\nabla^{k+1}dw=(\nabla^\pi)^{k+1}d^\pi w+\nabla^{k+1} w^*\lambda\otimes X_\lambda
+O_{k}(d^\pi w, w^*\lambda),
$$
where $O_k(d^\pi w, w^*\lambda)$ denotes some tensor living in $\CT^{k+1}\otimes \Omega^1(w^*TM) \subset \CT^{k+1}_1(w^*TM)$.
More specifically $O_{k}(d^\pi w, w^*\lambda)$
can be written into the form of a polynomial which consists of monomials of one of the following forms
\beastar
&&a \cdot (\bigotimes_{i=1, \cdots, |m|}(\nabla^\pi)^{m_i} d^\pi w\otimes  \bigotimes_{j=1, \cdots, |n|}\nabla^{n_j} w^*\lambda)\otimes d^\pi w, \\
&& b\cdot (\bigotimes_{i'=1, \cdots, |m|'}(\nabla^\pi)^{m'_{i'}} d^\pi w\otimes  \bigotimes_{j'=1 ,\cdots, |n|'}\nabla^{n'_{j'}} w^*\lambda)\otimes X_\lambda(w)
\eeastar
with $i, j, i', j'$, $m_i, n_j, m'_{i'}, n'_{j'}\leq k$ and
$$
1 \leq \Sigma_{i}m_i+\Sigma_{j}n_j\leq k+1,  \quad 2 \leq \Sigma_{i'}m'_{i'}+\Sigma_{j'}n'_{j'}\leq k+1
$$
and $a, b$ are some $C^\infty$ bounded functions on $\dot\Sigma$.
\end{lem}
\begin{proof}For the case $k=0$, we compute
\beastar
\nabla dw&=&\nabla d^\pi w+\nabla (w^*\lambda\otimes X_\lambda)\\
& = &  \nabla^\pi d^\pi w + \langle \nabla (d^\pi w), X_\lambda \rangle X_\lambda
 + \nabla w^*\lambda \otimes X_\lambda + w^*\lambda \otimes \nabla X_\lambda \\
&=&\nabla^\pi d^\pi w - \langle d^\pi w, \nabla X_\lambda \rangle X_\lambda
 + (\nabla w^*\lambda) \otimes X_\lambda + w^*\lambda \otimes \nabla X_\lambda \\
&=&\nabla^\pi d^\pi w+(\nabla w^*\lambda)\otimes X_\lambda(w)\\
&+&w^*\lambda\otimes \frac{1}{2}(\CL_{X_\lambda}J)J d^\pi w
- \left\langle d^\pi w, \frac{1}{2}(\CL_{X_\lambda}J)J d^\pi w \right \rangle\otimes X_\lambda.
\eeastar
It is obviously of the form in our induction assumption with the help of the metric tensor over $M$.
(Here $|m|=0$, $|n|=1$, $n_1=1$, and $|m|'=2$, $m'_1=m'_2=1$, $|n|'=0$.)

Now assuming the expression for any $0\leq i\leq k$ with $k\geq 0$ holds, we show that it holds for $k+1$ too.
First by the induction hypothesis, $\nabla^{k+1}dw$ can be decomposed into
\beastar
\nabla^{k+1}dw&=&\nabla(\nabla^k dw)\\
&=&\nabla((\nabla^\pi)^{k}d^\pi w)+\nabla(\nabla^{k} w^*\lambda\otimes X_\lambda)
+\nabla O_{k-1}(d^\pi w, w^*\lambda).
\eeastar
We examine them one by one.
For the term, we compute
\beastar
\nabla((\nabla^\pi)^{k}d^\pi w)&=&(\nabla^\pi)^{k+1}d^\pi w+\langle \nabla((\nabla^\pi)^{k}d^\pi w), X_\lambda \rangle\otimes X_\lambda\\
&=&(\nabla^\pi)^{k+1}d^\pi w-\langle (\nabla^\pi)^{k}d^\pi w,\nabla X_\lambda \rangle\otimes X_\lambda\\
&=&(\nabla^\pi)^{k+1}d^\pi w-\left\langle (\nabla^\pi)^{k}d^\pi w,\frac{1}{2}(\CL_{X_\lambda}J)J d^\pi w \right\rangle\otimes X_\lambda,
\eeastar
where the second term is absorbed into $O_{k}(d^\pi w, w^*\lambda)$.

For the second term, we obtain
\beastar
\nabla(\nabla^{k} w^*\lambda\otimes X_\lambda)&=&\nabla^{k+1} w^*\lambda\otimes X_\lambda
+\nabla^{k} w^*\lambda\otimes \nabla X_\lambda\\
&=&\nabla^{k+1} w^*\lambda\otimes X_\lambda
+\nabla^{k} w^*\lambda\otimes \frac{1}{2}(\CL_{X_\lambda}J)J d^\pi w,
\eeastar
where the second term again goes into $O_{k}(d^\pi w, w^*\lambda)$.

For the third one, we observe that when we take one more derivative of each term $O_{k-1}(d^\pi w, w^*\lambda)$,
the result becomes one of the following six types,
\bea
&&(\nabla a)\otimes (\bigotimes_{i=1, \cdots, |m|}(\nabla^\pi)^{m_i} d^\pi w\otimes  \bigotimes_{j=1, \cdots, |n|}\nabla^{n_j} w^*\lambda)\otimes d^\pi w\label{eq:aterm}\\
&&a\cdot \nabla(\bigotimes_{i=1, \cdots, |m|}(\nabla^\pi)^{m_i} d^\pi w\otimes  \bigotimes_{j=1, \cdots, |n|}\nabla^{n_j} w^*\lambda)\otimes d^\pi w\\
&&a\cdot(\bigotimes_{i=1, \cdots, |m|}(\nabla^\pi)^{m_i} d^\pi w\otimes  \bigotimes_{j=1, \cdots, |n|}\nabla^{n_j} w^*\lambda)\otimes \nabla d^\pi w\\
&&(\nabla b)\otimes(\bigotimes_{i'=1, \cdots, |m|'}(\nabla^\pi)^{m'_{i'}} d^\pi w\otimes  \bigotimes_{j'=1 ,\cdots, |n|'}\nabla^{n'_{j'}} w^*\lambda)\otimes X_\lambda\label{eq:bterm}\\
&&b\cdot \nabla(\bigotimes_{i'=1, \cdots, |m|'}(\nabla^\pi)^{m'_{i'}} d^\pi w\otimes  \bigotimes_{j'=1 ,\cdots, |n|'}\nabla^{n'_{j'}} w^*\lambda)\otimes X_\lambda\\
&&b\cdot (\bigotimes_{i'=1, \cdots, |m|'}(\nabla^\pi)^{m'_{i'}} d^\pi w\otimes  \bigotimes_{j'=1 ,\cdots, |n|'}\nabla^{n'_{j'}} w^*\lambda)\otimes \nabla X_\lambda.
\eea
The \eqref{eq:aterm} and \eqref{eq:bterm} live in $O_k$ because we assume $\nabla a$ (so it $\nabla  b$) can be written as a bounded function tensor along $dw= d^\pi w+w^*\lambda\otimes X_\lambda$.
Other four terms live in $O_k$ because they all raise the order by $1$ either via a direct differentiation or via
a usage of the metric property to rewrite
$$
\nabla(\nabla^\pi)^md^\pi w=\nabla^{m+1}d^\pi w-\langle (\nabla^\pi)^md^\pi w, \nabla X_\lambda\rangle X_\lambda
$$
followed by the insertion $\nabla_{dw}X_\lambda=\frac{1}{2}(\CL_{X_\lambda}J)Jd^\pi w$.

This completes the induction step and hence the proof of the lemma.
\end{proof}
Then applying Proposition \ref{prop:coercive-L2} and using the Cauchy--Schwarz inequality inductively, we immediately get

\begin{cor}\label{cor:kinduction}
For any pair of domains $D_1$ and $D_2$ in $\dot\Sigma$ such that $\overline{D_1}\subset D_2$,
\beastar
\|\nabla^{k+1}dw\|^2_{L^2(D_1)}&\leq& \|(\nabla^\pi)^{k+1}d^\pi w\|^2_{L^2(D_1)}+\|\nabla^{k+1} (w^*\lambda)\|^2_{L^2(D_1)}\\
&+& \int_{D_2}G_k(d^\pi w, w^*\lambda)
\eeastar
for any contact instanton $w$, for another polynomial function of $G_k$ of the type described in Theorem \ref{thm:local-higher-regularity}
\end{cor}

\begin{rem}\label{rem:higher-regularity} Starting from Proposition 5.3, and applying Cauchy--Schwarz inequaltiy and
the induction, we can further obtain the inequality of the form
$$
\int_{D_2}G_k(d^\pi w, w^*\lambda) \leq C_{k;D_1,D_2}(\|dw\|^2_{L^2(D_2)}, \|dw\|^4_{L^2(D_2)})
$$
where $C_{k; D_1,D_2}(r,s)$ is a polynomial function of $r, \,s$ satisfying $C_{k; D_1,D_2}(0,0)=0$.
\end{rem}

Now we estimate $|(\nabla^\pi)^{k+1} d^\pi w|^2 + |\nabla^{k+1}( w^*\lambda)|^2$ inductively.
We first denote
$$
S_k = (\nabla^\pi)^k d^\pi w, \quad T_k=\nabla^k(w^*\lambda).
$$
The general Weitzenb\"ock formula (see (C.7) Appendix \cite{freed-uhlen}
e.g.) applied to $S_k$ and $T_k$ respectively, we obtain
\bea\label{eq:|nablakdpiw|2}
|\nabla^\pi S_k|^2 & = &- \frac{1}{2}\Delta |S_k|^2 + \langle \Delta^\pi S_k, S_k\rangle  - \langle \widetilde R S_k, S_k \rangle\\
|\nabla T_k|^2 & = &- \frac{1}{2}\Delta |T_k|^2 + \langle \Delta T_k, T_k\rangle  - K|T_k|^2.
\eea
where $\widetilde R$ is a zeroth order operator acting on the sections of $w^*\xi \otimes T^*\dot \Sigma$
which depends only on the curvature of the pull-back connection
$w^*\nabla^\pi$ and the Levi-Civita connection of $(\dot \Sigma,h)$. In particular,
$\widetilde R$ is a bounded bilinear form.

Now it remains to prove
\begin{prop}\label{prop:dwpiw*lambda}
For any pair of domains $D_1$ and $D_2$ in $\dot\Sigma$ such that $\overline{D_1}\subset D_2$,
$$
\|\nabla^{k+1}d^\pi w\|^2_{L^2(D_1)}+\|\nabla^{k+1}w^*\lambda\|^2_{L^2(D_1)}
\leq \int_{D_2} M_k(d^\pi w, w^*\lambda)
$$
for any contact instanton $w$, where $M_k$ is another polynomial function of the type described as in Theorem \ref{thm:local-higher-regularity}.
\end{prop}
\begin{proof}
The $k=0$ case is proved by Proposition \ref{prop:coercive-L2}.

For $k \geq 1$, we first quote the following general lemma whose proof is a direct calculation which we leave to the readers.

\begin{lem}\label{lem:commuting} For any $\xi$-valued $1$-form $\alpha$ over the map $w$,
\be\label{eq:dnablapinablazeta}
d^{\nabla^\pi}(\nabla^\pi_{(\cdot)} \alpha) = \nabla^\pi_{(\cdot)}(d^{\nabla^\pi} \alpha) + (R^\pi(dw,dw(\cdot)) \, \alpha)^{\text{\rm skew}}
\ee
where $(R^\pi(dw,dw(\cdot)) \, \alpha)^{\text{skew}}$ is the skew-symmetrization of the bilinear map
$(\xi_1,\xi_2) \mapsto R^\pi(dw(\xi_1),dw(\cdot)) \, \alpha(\xi_2)$, with $R^\pi$ the $\xi$-projection of the curvature of the triad connection $\nabla$.
\end{lem}

Now we choose and fix a domain $D$ and a smooth non-negative cut-off function $\chi:D_2\to \R$,  such that
$\overline D_1 \subset D \subset \overline D \subset D_2$, and $\chi\equiv 1$ on $\overline{D_1}$, $\chi\equiv 0$ on $D_2-D$.
Obviously we have
$$
\int_{D_1} |(\nabla^\pi)^{k+1} d^\pi w)|^2 =
\int_{D_1} |\nabla^\pi S_k|^2\leq  \int_{D} \chi^2 |\nabla^\pi S_k|^2.
$$
On the other hand, applying the Weitzenb\"ock formula similarly as $k=0$,  we write
\bea\label{eq:int}
\int_{D} \chi^2 |\nabla^\pi S_k|^2  =  - \int_{D} \frac{\chi^2}{2} \Delta |S_k|^2 + \int_{D} \chi^2 \langle \Delta^\pi S_k,S_k \rangle -
\int_{D} \chi^2 \langle \widetilde R S_k,S_k  \rangle,\nonumber\\
\eea
for $k\geq 1$,
where $D$ and $\chi$ are chosen the same as in the proof of Proposition \ref{prop:coercive-L2app}.
Obviously the last term  can be bounded by the norm $\|dw\|_{k,2;D_2}^2$, and so
we will focus on the first two terms henceforth.

Similarly as before we get
\bea\label{eq:nablapiSSk}
\int_{D} |\langle \Delta^\pi S_k,S_k \rangle|
 &\leq&  \left(1 + \|d\chi\|_{C^0(D)}\right) \int_{D_2} (|d^{\nabla^\pi}S_k|^2 + |\delta^{\nabla^\pi}S_k|^2)
+ 2  \int_{D_2} |S_k|^2\nonumber\\
&=&2\left(1 + \|d\chi\|_{C^0(D)}\right) \int_{D_2} |d^{\nabla^\pi}S_k|^2
+ 2 \int_{D_2} |S_k|^2, \nonumber\\
&{}&
\eea
where the last equality follows from the $J$-linearity of $\nabla^\pi$ similarly as for Lemma \ref{lem:2delta}.
Again the last term $\int_{D_2} |S_k|^2$ can be bounded by the norm $\|dw\|_{k,2;D_2}^2$, and so it remains to focus on $\int_{D_2} |d^{\nabla^\pi}S_k|^2$.

We first observe the following
\begin{lem} For any $k\geq 0 $, $d^{\nabla^\pi} S_k$ can be written as a sum of tensors of forms $a_{ij}\otimes S_i \otimes T_j$ with $0\leq i, j\leq k$, where $a_{ij}$'s  are some $C^\infty$-bounded sections in $\Omega^1(\dot\Sigma)\otimes w^*TM$.
\end{lem}
\begin{proof}
The proof of this lemma is again by an induction argument.
For $k=0$, we have $S_0 = d^\pi w$ and the fundamental equation \eqref{eq:Laplacian-w}
$$
d^{\nabla^\pi} S_0 = d^{\nabla^\pi}d^\pi w = -\frac{1}{2} w^*\lambda\circ j \wedge (\CL_{X_\lambda}J) d^\pi w.
$$
It can be easily checked
$$
-\frac{1}{2} w^*\lambda\circ j \wedge (\CL_{X_\lambda}J) d^\pi w = \frac{1}{2} w^*\lambda \wedge (\CL_{X_\lambda}J) J d^\pi w.
$$
Combining the two, the initial case $k=0$ holds.

Now suppose the lemma holds for $k-1$ with $k \geq 1$.
Applying Lemma \ref{lem:commuting}, we derive
\bea\label{eq:k-commuting}
d^{\nabla^\pi} S_k & = &  d^{\nabla^\pi} \nabla^\pi S_{k-1} \nonumber\\
& = & \nabla^\pi( d^{\nabla^\pi} S_{k-1}) +(R^\pi(dw,dw(\cdot)) \, S_{k-1})^{\text{\rm skew}}.
\eea
The curvature term is certainly of form required in the lemma (even for $k-1$ instead of $k$) by the
induction hypothesis.

On the other hand, for the first term $\nabla^\pi(d^{\nabla^\pi} S_{k-1})$ in \eqref{eq:k-commuting},
the induction hypothesis implies $d^{\nabla^\pi} S_{k-1}$ is a summand of the terms each of which of the form
$a_{ij}\otimes S_i \otimes T_j$ with $0\leq i, j\leq k-1$. By differentiating this and applying
Lemma \ref{lem:nablad}, we have proved the lemma for $k$. This finishes the proof.
\end{proof}

Using this lemma, we have obtained
$$
\int_{D_2}| d^{\nabla^\pi} S_k|^2 \leq \int_{D_2} H_k(d^\pi w, w^*\lambda),
$$
where $H_k$ is some polynomial function of the type as in Theorem \ref{thm:local-higher-regularity}.
Combining the above two terms in \eqref{eq:nablapiSSk}, we have
obtained the desired polynomial integral bound
$$
\int_{D} |\langle \Delta^\pi S_k,S_k \rangle| \leq I_k(d^\pi w,w^*\lambda)
$$
again with the same kind of polynomial $I_k$,
which in particular implies
\be\label{eq:intchi}
\int_{D} \chi^2 |\langle \Delta^\pi S_k,S_k \rangle| \leq I_k(d^\pi w,w^*\lambda).
\ee

Next we go back to the first term in \eqref{eq:int}, which is
$- \int_{D}\frac{\chi^2}{2}\Delta |(\nabla^\pi)^k d^\pi w|^2$.
For this one, using similar computation as in Appendix \ref{appen:local-coercive},
one can obtain
\bea\label{eq:intchiDelta}
\left| \int_{D} \chi^2 \Delta |S_k|^2 \right|&\leq&
\frac{1}{\epsilon} \int_{D} \chi^2|\nabla^\pi S_k|^2 + \epsilon \int_{D_2} |d\chi|^2|S_k|^2\nonumber\\
&\leq&\frac{1}{\epsilon} \int_{D} \chi^2|\nabla^\pi S_k|^2 + \epsilon \|d\chi\|_{C^0(D)}^2\int_{D_2} |S_k|^2.
\eea
The second term is bounded by a similar polynomial integral bound, which we denote by $I'_k$.
Then by substituting this inequality into \eqref{eq:int}, setting $\epsilon=1$,
using the two polynomial integral bounds from $I_k$ and $I_k'$, and applying a back-substitution,
we obtain
\beastar
\int_{D} \chi^2 |\nabla^\pi S_k|^2
\leq \frac{1}{2} \int_{D}\chi^2 |\nabla^\pi S_k|^2+\int_{D_2}(I_k+I_k')
\eeastar
which is equivalent to
$$
\int_{D}\chi^2 |\nabla^\pi S_k|^2 \leq  2\int_{D_2}(I_k+I_k').
$$
Therefore
we obtain
$$
\int_{D_1}|\nabla^\pi S_k|^2\leq
\int_{D}\chi^2 |\nabla^\pi S_k|^2 \leq  2\int_{D_2}(I_k+I_k').$$
The treatment for $\int_{D_1}|\nabla T_k|^2$ is similar but much simpler, so we omit details.

These together finish the proof of Proposition \ref{prop:dwpiw*lambda}.

\end{proof}

Combining Proposition \ref{prop:dwpiw*lambda} and Corollary \ref{cor:kinduction}, we have proved
Theorem \ref{thm:local-higher-regularity}, where the polynomial $\CJ_k$ can be taken as  the sum of all the polynomials arising from the  proofs of Proposition \ref{prop:dwpiw*lambda} and Corollary \ref{cor:kinduction}. The order of $\CJ_k$ can be limited to $2k+4$ with a careful look at the induction steps.

\medskip

The following is an immediate consequence of Theorem \ref{thm:local-W12}, Theorem \ref{thm:local-higher-regularity}
and Remark \ref{rem:higher-regularity}.

\begin{cor}
Any weak solution of equation \eqref{eq:instanton} in
$W^{1,4}_{\loc}$ automatically lies in $W^{3,2}_{\loc}$ and becomes a classical solution,
hence smooth.
\end{cor}

\section{Asymptotic behavior of contact instantons}
\label{sec:reeborbits}

In this section, we study the asymptotic behavior of contact instantons
on the Riemann surface $(\dot\Sigma, j)$ associated with a metric $h$ with \emph{cylindrical ends}.
To be precise, we assume there exists a compact set $K_\Sigma\subset \dot\Sigma$,
such that $\dot\Sigma-\Int(K_\Sigma)$ is a disjoint union of punctured disks
 each of which is isometric to the half cylinder $[0, \infty)\times S^1$ or $(-\infty, 0]\times S^1$, where
the choice of positive or negative cylinders depends on the choice of analytic coordinates
at the punctures.
We denote by $\{p^+_i\}_{i=1, \cdots, l^+}$ the positive punctures, and by $\{p^-_j\}_{j=1, \cdots, l^-}$ the negative punctures.
Here $l=l^++l^-$. Denote by $\phi^{\pm}_i$ such isometries from cylinders to disks.
We first state our assumptions for the study of the behavior of punctures.

\begin{defn}Let $\dot\Sigma$ be a punctured Riemann surface with punctures
$\{p^+_i\}_{i=1, \cdots, l^+}\cup \{p^-_j\}_{j=1, \cdots, l^-}$ equipped
with a metric $h$ with \emph{cylindrical ends} outside a compact subset $K_\Sigma$.
Let
$w: \dot \Sigma \to M$ be any smooth map. We define the total $\pi$-harmonic energy $E^\pi(w)$
by
\be\label{eq:endenergy}
E^\pi(w) = E^\pi_{(\lambda,J;\dot\Sigma,h)}(w) = \frac{1}{2} \int_{\dot \Sigma} |d^\pi w|^2
\ee
where the norm is taken in terms of the given metric $h$ on $\dot \Sigma$ and the triad metric on $M$.
\end{defn}

We put the following hypotheses in our asymptotic study of the finite
energy contact instanton maps $w$:

\begin{hypo}\label{hypo:basic}
Let $h$ be the metric on $\dot \Sigma$ given above.
Assume $w:\dot\Sigma\to M$ satisfies the contact instanton equations \eqref{eq:instanton},
and
\begin{enumerate}
\item $E^\pi_{(\lambda,J;\dot\Sigma,h)}(w)<\infty$ (finite $\pi$-energy);
\item $\|d w\|_{C^0(\dot\Sigma)} <\infty$.
\end{enumerate}
\end{hypo}

Throughout this section, we work locally near one puncture, i.e., on
$D^\delta(p) \setminus \{p\}$. By taking the associated conformal coordinates $\phi^+ = (\tau,t)
:D^\delta(p) \setminus \{p\} \to [0, \infty)\times S^1 \to $ such that $h = d\tau^2 + dt^2$,
we need only look at a map $w$ defined on the half cylinder $[0, \infty)\times S^1\to M$
without loss of generality.

The above finite $\pi$-energy hypothesis implies
\be\label{eq:hypo-basic-pt}
\int_{[0, \infty)\times S^1}|d^\pi w|^2 \, d\tau \, dt <\infty, \quad \|d w\|_{C^0([0, \infty)\times S^1)}<\infty
\ee
in these coordinates.

Let $w$ satisfy Hypothesis \ref{hypo:basic}. We can associate two
natural asymptotic invariants at each puncture defined as
\bea
T & := & \frac{1}{2}\int_{[0,\infty) \times S^1} |d^\pi w|^2 + \int_{\{0\}\times S^1}(w|_{\{0\}\times S^1})^*\lambda\label{eq:TQ-T}\\
Q & : = & \int_{\{0\}\times S^1}((w|_{\{0\}\times S^1})^*\lambda\circ j).\label{eq:TQ-Q}
\eea
(Here we only look at positive punctures. The case of negative punctures is similar.)

\begin{rem}\label{rem:TQ}
For any contact instanton $w$, since $\frac{1}{2}|d^\pi w|^2\, dA=d(w^*\lambda)$, by Stokes' formula,
$$
T = \frac{1}{2}\int_{[s,\infty) \times S^1} |d^\pi w|^2 + \int_{\{s\}\times S^1}(w|_{\{s\}\times S^1})^*\lambda, \quad
\text{for any } s\geq 0.
$$

Moreover, since $d(w^*\lambda\circ j)=0$, the integral
$$
\int_{\{s \}\times S^1}(w|_{\{s \}\times S^1})^*\lambda\circ j, \quad
\text{for any } s \geq 0
$$
does not depend on $s$ whose common value is nothing but $Q$.
\end{rem}
We call $T$ the \emph{asymptotic contact action}
and $Q$ the \emph{asymptotic contact charge} of the contact instanton $w$ at the given puncture.

For a given contact instanton $w: [0, \infty)\times S^1\to M$, we consider the family of maps
$w_s: [-s, \infty) \times S^1 \to M$ defined by
$
w_s(\tau, t) = w(\tau + s, t)$.
For any compact set $K\subset \R$, there exists some $s_0$ large enough such that
$K\subset [-s, \infty)$ for every $s\geq s_0$. For such $s\geq s_0$, we can also get an $[s_0, \infty)$-family of maps by defining $w^K_s:=w_s|_{K\times S^1}:K\times S^1\to M$.

The asymptotic behavior of $w$ at infinity can be understood by studying the limiting behavior of the sequence of maps
$\{w^K_s:K\times S^1\to M\}_{s\in [s_0, \infty)}$, for each given compact set $K\subset \R$.

First of all,
it is easy to check that under Hypothesis \ref{hypo:basic}, the family
$\{w^K_s:K\times S^1\to M\}_{s\in [s_0, \infty)}$ satisfies the following
\begin{enumerate}
\item $\delbar^\pi w^K_s=0$, $d((w^K_s)^*\lambda\circ j)=0$, for every $s\in [s_0, \infty)$
\item $\lim_{s\to \infty}\|d^\pi w^K_s\|_{L^2(K\times S^1)}=0$
\item $\|d w^K_s\|_{C^0(K\times S^1)}\leq \|d w\|_{C^0([0, \infty)\times S^1)}<\infty$.
\end{enumerate}

From (1) and (3) together with the compactness of the target manifold $M$ (which provides a uniform $L^2(K\times S^1)$ bound)
and Theorem \ref{thm:local-regularity}, we obtain
$$
\|w^K_s\|_{W^{3,2}(K\times S^1)}\leq C_{K;(3,2)}<\infty,
$$
for some constant $C_{K;(3,2)}$ independent of $s$.
Then by compactness of the embedding of $W^{3,2}(K\times S^1)$ into $C^{1, \alpha}(K\times S^1)$ for some $0< \alpha < 1$,
 $\{w^K_s:K\times S^1\to M\}_{s\in [s_0, \infty)}$ is sequentially pre-compact.
Therefore, for any sequence $s_k \to \infty$, there exists a subsequence, still denoted by $s_k$,
and some limit $w^K_\infty\in C^{1}(K\times S^1, M)$ (which may depend on the subsequence $\{s_k\}$), such that
$$
w^K_{s_k}\to w^K_{\infty}, \quad \text {as } k\to \infty,
$$
in the $C^{1}(K\times S^1, M)$-norm sense.
Further, combining this with (2), we get
$$
dw^K_{s_k}\to dw^K_{\infty} \quad \text{and} \quad dw^K_\infty=(w^K_\infty)^*\lambda\otimes X_\lambda,
$$
and both $(w^K_\infty)^*\lambda$ are $(w^K_\infty)^*\lambda\circ j$ are harmonic $1$-forms by (1).

Notice that these limiting maps $w^K_\infty$ have a common extension $w_\infty: \R\times S^1\to M$
by a  diagonal sequence argument where, one takes a sequence of compact sets $K$ one including another and exhausting $\R$.
Then $w_\infty$ is $C^1$,  satisfies
$$
\|d w_\infty\|_{C^0(\R\times S^1)}\leq \|d w\|_{C^0([0, \infty)\times S^1)}<\infty
$$
and $d^\pi w_\infty = 0$ and hence
$$
dw_\infty=(w_\infty)^*\lambda \otimes X_\lambda.
$$
Then we derive from Theorem \ref{thm:local-regularity} that $w_\infty$ is actually in $C^\infty$.
Also notice that both $(w_\infty)^*\lambda$ and $(w_\infty)^*\lambda\circ j$ are bounded harmonic $1$-forms on $\R\times S^1$,
and hence they must be written in  the form
$$
(w_\infty)^*\lambda=a\,d\tau+b\,dt, \quad (w_\infty)^*\lambda\circ j=b\,d\tau-a\,dt,
$$
where $a$, $b$ are some constants.
We will show that such $a$ and $b$ are actually related to $T$ and $Q$ as
$$
a=-Q, \quad b=T.
$$

By taking an arbitrary point $r\in K$, since $w_\infty|_{\{r\}\times S^1}$ is the limit of some sequence $w_{s_k}|_{\{r\}\times S^1}$
in the $C^1$ sense, we derive
\beastar
b& = & \int_{\{r\}\times S^1}(w_\infty|_{\{r\}\times S^1})^*\lambda
= \int_{\{r\}\times S^1}\lim_{k\to \infty}(w_{s_k}|_{\{r\}\times S^1})^*\lambda\\
&=&\lim_{k\to \infty}\int_{\{r\}\times S^1}(w_{s_k}|_{\{r\}\times S^1})^*\lambda
= \lim_{k\to \infty}\int_{\{r+s_k\}\times S^1}(w|_{\{r+s_k\}\times S^1})^*\lambda\\
&=&\lim_{k\to \infty}(T-\frac{1}{2}\int_{[r+s_k, \infty)\times S^1}|d^\pi w|^2)\\
&=&T-\lim_{k\to \infty}\frac{1}{2}\int_{[r+s_k, \infty)\times S^1}|d^\pi w|^2
= T;\\
\\
-a &= & \int_{\{r\}\times S^1}(w_\infty|_{\{r\}\times S^1})^*\lambda\circ j
= \int_{\{r\}\times S^1}\lim_{k\to \infty}(w_{s_k}|_{\{r\}\times S^1})^*\lambda\circ j\\
&=&\lim_{k\to \infty}\int_{\{r\}\times S^1}(w_{s_k}|_{\{r\}\times S^1})^*\lambda\circ j\\
&=&\lim_{k\to \infty}\int_{\{r+s_k\}\times S^1}(w|_{\{r+s_k\}\times S^1})^*\lambda\circ j
= Q.
\eeastar
Here in the derivation, we use Remark \ref{rem:TQ}.

As we have already seen in the proof of Proposition \ref{prop:abbas}, the image of $w_\infty$ is contained in
a single leaf of the Reeb foliation by the connectedness of $[0,\infty) \times S^1$.
Let $\gamma: \R \to M$ be a parametrisation of
the leaf so that $\dot \gamma = X_\lambda(\gamma)$. Then we can write $w_\infty(\tau, t)=\gamma(s(\tau, t))$, where
$s:\R\times S^1\to \R$ and $s=-Q\,\tau+T\,t+c_0$ since $ds=-Q\,d\tau+T\,dt$, where $c_0$ is some constant.

From this we derive that, if $T\neq 0$,
$\gamma$ is a closed orbit of period $T$.
If $T=0$ but $Q\neq 0$, we can only conclude that $\gamma$ is a Reeb trajectory parameterized by $\tau\in \R$.
Of course, if both $T$ and $Q$ vanish, $w_\infty$ is a constant map.

In summary, we have given the proof of the following subsequential convergence theorem.
This includes the special case of \cite[Theorem 31]{hofer} given in the framework of symplectization
which corresponds to the case $Q=0$, $T\neq 0$ and $K=\{0\}$ here.
Besides looking at two constants $T$ and $Q$,  this also strengthens
the convergence statement of \cite[Theorem 31]{hofer} in that the $s$-coordinates
do not enter into the convergence statement or its proof.
Moreover, uniform convergence on any compact subset $K\times S^1\subset [0, \infty)\times S^1$
(which enhances the result for $K=\{0\}$ shown in \cite{hofer}) is an important ingredient
which enables us to follow the three-interval method in deriving the exponential decay
result for the case of Morse--Bott type contact forms in \cite{oh-wang3} (see also Part II of \cite{oh-wang2}).

\begin{thm}[Subsequence Convergence]\label{thm:subsequence}
Let $w:[0, \infty)\times S^1\to M$ satisfy the contact instanton equations \eqref{eq:instanton}
and Hypothesis \eqref{eq:hypo-basic-pt}.

Then for any sequence $s_k\to \infty$, there exists a subsequence, still denoted by $s_k$, and a
massless instanton $w_\infty(\tau,t)$ (i.e., $E^\pi(w_\infty) = 0$)
on the cylinder $\R \times S^1$  such that
$$
\lim_{k\to \infty}w(s_k + \tau, t) = w_\infty(\tau,t)
$$
in the $C^l(K \times S^1, M)$ sense for any $l$, where $K\subset [0,\infty)$ is an arbitrary compact set.

Furthermore, $w_\infty$ has the formula $w_\infty(\tau,t)= \gamma(-Q\, \tau + T\, t)$, where $\gamma$ is some Reeb trajectory,
and for the case of $Q=0$ or $T\neq 0$, the trajectory $\gamma$ is a \emph{closed} Reeb orbit of $X_\lambda$ with period $T$.

\end{thm}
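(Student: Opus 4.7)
The plan is to follow the translation-compactness strategy: consider the family $w_s(\tau,t) := w(\tau+s,t)$ for $s \geq 0$ and extract a limit on each compact cylinder $K \times S^1 \subset \mathbb{R} \times S^1$, then identify the limit using the asymptotic invariants $T, Q$.

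First I would note that the family $\{w_s\}$ satisfies the contact instanton equations on its domain, inherits the uniform bound $\|dw_s\|_{C^0} \leq \|dw\|_{C^0} < \infty$, and enjoys the decay
\[
\|d^\pi w_s\|_{L^2(K \times S^1)}^2 \leq \int_{[s+\min K, \infty) \times S^1} |d^\pi w|^2 \longrightarrow 0
\]
as $s \to \infty$, by finiteness of the $\pi$-harmonic energy. Combining the $C^0$ bound on $dw_s$ with compactness of $M$ gives a uniform $L^2$ (and $L^4$) bound on $w_s$ over $K \times S^1$. Applying Theorem \ref{thm:local-higher-regularity} with nested domains $\overline{D}_1 \subset D_2 \subset \Int(D_3)$ exhausting $\mathbb{R} \times S^1$ then yields uniform $W^{k,2}_{\text{loc}}$ bounds for every $k$, independent of $s$.

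Given these bounds, Rellich compactness gives $C^l_{\text{loc}}$ subsequential convergence for every $l$, and a standard diagonal argument over an exhaustion of $\mathbb{R} \times S^1$ by compact sets yields a limit $w_\infty \in C^\infty(\mathbb{R} \times S^1, M)$ along a single diagonal subsequence (still denoted $s_k$). The limit $w_\infty$ automatically satisfies the contact instanton equations, and the $L^2$-decay of $d^\pi w_{s_k}$ forces $d^\pi w_\infty \equiv 0$, so $dw_\infty = w_\infty^*\lambda \otimes X_\lambda$. In particular the image of $w_\infty$ is contained in a single leaf of the Reeb foliation, and $E^\pi(w_\infty)=0$, i.e.\ $w_\infty$ is massless.

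The final identification rests on the observation that the pullback $w_\infty^*\lambda$ is a bounded $1$-form on $\mathbb{R} \times S^1$ which is both closed (from $d(w^*\lambda) = \tfrac{1}{2}|d^\pi w|^2 dA = 0$ in the limit) and co-closed (since $\delta w^*\lambda = 0$ from the instanton equation), hence harmonic; likewise for $w_\infty^*\lambda \circ j$. Bounded harmonic $1$-forms on $\mathbb{R} \times S^1$ are constant-coefficient, so
\[
w_\infty^*\lambda = a\, d\tau + b\, dt, \qquad w_\infty^*\lambda \circ j = b\, d\tau - a\, dt
\]
for constants $a,b$. To evaluate these constants, I would apply Stokes/energy conservation as in Remark \ref{rem:TQ}: integrating $w_\infty^*\lambda$ over $\{r\}\times S^1$ and passing to the limit through the approximants $w_{s_k}$ identifies $b = T$ and $-a = Q$. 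Writing $w_\infty(\tau,t) = \gamma(s(\tau,t))$ with $\dot\gamma = X_\lambda(\gamma)$, the relation $ds = -Q\, d\tau + T\, dt$ yields $w_\infty(\tau,t) = \gamma(-Q\tau + T t + c_0)$; absorbing $c_0$ into $\gamma$ completes the proof. The only subtle point is the uniform $W^{k,2}$ bound on the translates, which is precisely what the local higher regularity theorem was designed to deliver; once it is in hand, the rest is clean bookkeeping with Stokes' theorem and the uniform $C^l$ convergence.
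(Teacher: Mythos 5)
Your proposal follows the paper's proof essentially step for step: translate, use the $C^0$ gradient bound together with the local higher regularity Theorem \ref{thm:local-higher-regularity} to get uniform $W^{k,2}$ bounds on compacta, extract a diagonal $C^l_{\loc}$ limit $w_\infty$, note it is massless and that $w_\infty^*\lambda$ is a bounded harmonic $1$-form on $\R\times S^1$ (hence constant-coefficient), and identify the coefficients as $-Q$ and $T$ via Stokes' theorem and Remark \ref{rem:TQ}. The only cosmetic difference is that you invoke $W^{k,2}\hookrightarrow C^{l}$ for all $k$ at once, while the paper states the $W^{3,2}\hookrightarrow C^2$ step explicitly as the base case; the substance is identical.
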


From the previous theorem, we  immediately get the following corollary.

\begin{cor}\label{cor:tangent-convergence}Let $w:[0, \infty)\times S^1\to M$ satisfy the contact instanton equations \eqref{eq:instanton}
and Hypothesis \eqref{eq:hypo-basic-pt}.
Then
\beastar
&&\lim_{s\to \infty}\left|\pi \frac{\del w}{\del\tau}(s+\tau, t)\right|=0, \quad
\lim_{s\to \infty}\left|\pi \frac{\del w}{\del t}(s+\tau, t)\right|=0\\
&&\lim_{s\to \infty}\lambda(\frac{\del w}{\del\tau})(s+\tau, t)=-Q, \quad
\lim_{s\to \infty}\lambda(\frac{\del w}{\del t})(s+\tau, t)=T
\eeastar
and
$$
\lim_{s\to \infty}|\nabla^l dw(s+\tau, t)|=0 \quad \text{for any}\quad l\geq 1.
$$
All the limits are uniform for $(\tau, t)$ in $K\times S^1$ with compact $K\subset \R$.

\end{cor}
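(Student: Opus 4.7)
The plan is to reduce the corollary to Theorem \ref{thm:subsequence} by computing explicitly the first and higher covariant derivatives of any subsequential limit $w_\infty$ and then invoking the standard subsequence principle. The crucial observation is that, although Theorem \ref{thm:subsequence} only provides subsequential $C^l$-convergence, the values of the pointwise limits we are asked to establish are determined by the asymptotic invariants $T, Q$ of $w$ itself, not by the choice of subsequence or of the underlying Reeb trajectory $\gamma$.

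First, I would read off the first derivatives of the limit. Any subsequential limit takes the form $w_\infty(\tau,t)=\gamma(-Q\tau+Tt)$ with $\dot\gamma=X_\lambda(\gamma)$, so the chain rule gives
$$
\frac{\del w_\infty}{\del\tau}=-Q\,X_\lambda(w_\infty), \qquad \frac{\del w_\infty}{\del t}=T\,X_\lambda(w_\infty),
$$
which immediately yields $\pi\frac{\del w_\infty}{\del\tau}=\pi\frac{\del w_\infty}{\del t}=0$, $\lambda(\frac{\del w_\infty}{\del\tau})=-Q$, and $\lambda(\frac{\del w_\infty}{\del t})=T$. For the higher-order vanishing, I would write $dw_\infty=w_\infty^*\lambda\otimes X_\lambda$ with $w_\infty^*\lambda=-Q\,d\tau+T\,dt$ of constant coefficients on the flat cylinder, and compute
$$
(\nabla_\eta dw_\infty)(\eta')=w_\infty^*\lambda(\eta')\,\nabla_{dw_\infty(\eta)}X_\lambda=w_\infty^*\lambda(\eta)\,w_\infty^*\lambda(\eta')\,\nabla_{X_\lambda}X_\lambda=0
$$
by Axiom (3) of the contact triad connection. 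Iterating the same calculation, all higher covariant derivatives $\nabla^l dw_\infty$ vanish identically.

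It then remains to upgrade subsequential to full convergence. If one of the stated limits were to fail, say $\pi\frac{\del w}{\del\tau}$ does not tend to $0$ uniformly on $K\times S^1$, there would exist $\varepsilon>0$ and a sequence $s_k\to\infty$ with $\sup_{K\times S^1}|\pi\frac{\del w_{s_k}}{\del\tau}|\geq\varepsilon$; applying Theorem \ref{thm:subsequence} to $\{s_k\}$ and passing to a sub-subsequence gives $C^1$-convergence to a model $w_\infty$ with $\pi\frac{\del w_\infty}{\del\tau}\equiv 0$, a contradiction. The same argument, upgraded to $C^{l+1}$-convergence and fed the identities computed above, handles each of the other stated limits. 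I expect this final step to be routine once Theorem \ref{thm:subsequence} is in hand; the only mild subtlety worth flagging is that the constants $T, Q$, and the vanishing of $\pi\,dw_\infty$ and of all $\nabla^l dw_\infty$, are common to \emph{every} subsequential limit, which is precisely what makes the subsequence principle applicable.
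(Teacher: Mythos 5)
Your proof is correct and follows essentially the same route as the paper: identify the first derivatives of any subsequential limit $w_\infty(\tau,t)=\gamma(-Q\tau+Tt)$, then run a contradiction argument off the $C^l$-subsequence convergence of Theorem \ref{thm:subsequence}, using that the limiting values $(0,0,-Q,T)$ are determined by $(T,Q)$ alone and are therefore common to every subsequential limit. The one place where you do a bit more than the paper is the higher-order case: you compute explicitly that $\nabla^l dw_\infty\equiv 0$ for $l\geq 1$ from $w_\infty^*\lambda=-Q\,d\tau+T\,dt$ having constant coefficients together with Axiom (3) of the contact triad connection ($\nabla_{X_\lambda}X_\lambda=0$), and then feed this into the same subsequence principle; the paper instead dispatches the higher-order decay with a one-line appeal to the local a priori estimates (Theorem \ref{thm:local-higher-regularity}) without identifying the limit. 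Your version is the more explicit and self-contained of the two, and it is a legitimate reading of what the paper intends.
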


\begin{proof} We first consider the first derivative estimate, i.e., the $C^1$-decay estimate.
If any of the above limits doesn't hold uniformly (take $|\pi \frac{\del w}{\del\tau}(s+\tau, t)|$ for example),
then there exists some $\epsilon_0>0$ and a sequence
$k\to \infty$, $(\tau_j, t_j)\in K\times S^1$ such that $|\pi \frac{\del w}{\del\tau}(s_k+\tau_j, t_j)|\geq \epsilon_0$.
Then we can take a subsequence limit $(\tau_j, t_j)\to (\tau_0, t_0)$ such that
$|\pi \frac{\del w}{\del\tau}(s_k+\tau_0, t_0)|\geq \frac{1}{2}\epsilon_0$ for $k$ large enough.

However, by Theorem \ref{thm:subsequence}, we can take a subsequence of $s_k$ such that
$w(s_k+\tau, t)$ converges to $\gamma(-Q\, \tau + T\, t)$ in a neighborhood of $(\tau_0, t_0)\in K\times S^1$,
in the $C^\infty$ sense. Here $\gamma$ is some Reeb trajectory.
Then we get $\lim_{s\to \infty}|\pi \frac{\del w}{\del\tau}(s_k+\tau_0, t_0)|=0$ and get a contradiction.

Once we establish this uniform $C^1$-decay result, the higher order decay result is
an immediate consequence of the uniform local pointwise higher order a priori estimates on the cylinder from
Theorem \ref{thm:local-higher-regularity}.
\end{proof}

\appendix

\section{The Weitzenb\"ock formula for vector valued forms}\label{appen:weitzenbock}

In this appendix, we recall the standard Weitzenb\"ock formulas applied to our
current circumstance. A good exposition on the general Weitzenb\"ock formula is
provided in the appendix of \cite{freed-uhlen}.

Assume $(P, h)$ is a Riemannian manifold of dimension $n$ with metric $h$, and $D$ is the Levi-Civita connection.
Let $E\to P$ be any vector bundle with inner product $\langle\cdot, \cdot\rangle$,
and assume $\nabla$ is a connection on $E$ which is compatible with $\langle\cdot, \cdot\rangle$.

For any $E$-valued form $s$, calculating the (Hodge) Laplacian of the energy density
of $s$,  we get
\beastar
-\frac{1}{2}\Delta|s|^2=|\nabla s|^2+\langle Tr\nabla^2 s, s\rangle,
\eeastar
where for $|\nabla s|$ we mean the induced norm in the vector bundle $T^*P\otimes E$, i.e.,
$|\nabla s|^2=\sum_i|\nabla_{E_i}s|^2$ with $\{E_i\}$ an orthonormal frame of $TP$.
$Tr\nabla^2$ denotes the connection Laplacian, which is defined as
$Tr\nabla^2=\sum_i\nabla^2_{E_i, E_i}s$,
where $\nabla^2_{X, Y}:=\nabla_X\nabla_Y-\nabla_{\nabla_XY}$.

Denote by $\Omega^k(E)$ the space of $E$-valued $k$-forms on $P$. The connection $\nabla$
induces an exterior derivative by
\beastar
d^\nabla&:& \Omega^k(E)\to \Omega^{k+1}(E)\\
d^\nabla(\alpha\otimes \zeta)&=&d\alpha\otimes \zeta+(-1)^k\alpha\wedge \nabla\zeta.
\eeastar

It is not hard to check that for any $1$-forms, equivalently one can write
$$
d^\nabla\beta (v_1, v_2)=(\nabla_{v_1}\beta)(v_2)-(\nabla_{v_2}\beta)(v_1),
$$
where $v_1, v_2\in TP$.

We extend the Hodge star operator to $E$-valued forms by
\beastar
*&:&\Omega^k(E)\to \Omega^{n-k}(E)\\
*\beta&=&*(\alpha\otimes\zeta)=(*\alpha)\otimes\zeta
\eeastar
for $\beta=\alpha\otimes\zeta\in \Omega^k(E)$.

Define the Hodge Laplacian of the connection $\nabla$ by
$$
\Delta^{\nabla}:=d^{\nabla}\delta^{\nabla}+\delta^{\nabla}d^{\nabla},
$$
where $\delta^{\nabla}$ is defined by
$$
\delta^{\nabla}:=(-1)^{nk+n+1}*d^{\nabla}*.
$$
The following lemma is important for the derivation of the Weitzenb\"ock formula.
\begin{lem}\label{lem:d-delta}Assume $\{e_i\}$ is an orthonormal frame of $P$, and $\{\alpha^i\}$ is the dual frame.
Then we have
\beastar
d^{\nabla}&=&\sum_i\alpha^i\wedge \nabla_{e_i}\\
\delta^{\nabla}&=&-\sum_ie_i\rfloor \nabla_{e_i}.
\eeastar
\end{lem}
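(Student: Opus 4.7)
The plan is to verify both identities by a direct computation using $\R$-linearity, reducing to simple tensors of the form $\beta = \alpha \otimes \zeta$ with $\alpha \in \Omega^k(P)$ and $\zeta \in \Gamma(E)$.

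For the first identity, I would apply the Leibniz rule $\nabla_{e_i}(\alpha \otimes \zeta) = (D_{e_i}\alpha)\otimes \zeta + \alpha \otimes \nabla_{e_i}\zeta$ and split the sum $\sum_i \alpha^i \wedge \nabla_{e_i}(\alpha \otimes \zeta)$ into two pieces. The first piece $\sum_i \alpha^i \wedge (D_{e_i}\alpha) \otimes \zeta$ collapses to $d\alpha \otimes \zeta$ by the standard identity $d\alpha = \sum_i \alpha^i \wedge D_{e_i}\alpha$ for the Levi-Civita (torsion-free) connection on scalar-valued forms. The second piece, after commuting the $1$-form $\alpha^i$ past the $k$-form $\alpha$ at the cost of a sign $(-1)^k$, becomes $(-1)^k \alpha \wedge \sum_i \alpha^i \otimes \nabla_{e_i}\zeta = (-1)^k \alpha \wedge \nabla\zeta$. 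Summing gives exactly the defining formula of $d^\nabla$.

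For the second identity, I would substitute the first formula into $\delta^\nabla = (-1)^{nk+n+1} * d^\nabla *$. The key inputs are: (i) $\nabla_{e_i}$ commutes with the Hodge star $*$ because $D$ preserves the Riemannian volume form and $*$ only acts on the form factor, so $\nabla_{e_i}(*\beta) = *(\nabla_{e_i}\beta)$; (ii) the pointwise Hodge identity $*(\alpha^i \wedge \gamma) = (-1)^{|\gamma|}\iota_{e_i}(*\gamma)$ for any form $\gamma$ (verified by checking on monomials in an orthonormal coframe); and (iii) $** = (-1)^{k(n-k)}$ on $k$-forms. Chaining these to $d^\nabla *\beta = \sum_i \alpha^i \wedge *(\nabla_{e_i}\beta)$ and applying $*$ yields
\[
*d^\nabla *\beta = (-1)^{(n-k)(k+1)} \sum_i \iota_{e_i}\nabla_{e_i}\beta,
\]
and the total sign $nk + n + 1 + (n-k)(k+1) \equiv 1 + k(k+1) \equiv 1 \pmod{2}$ produces the desired minus sign.

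The only real obstacle is bookkeeping the signs in the second identity, particularly the interplay between the sign in $\delta^\nabla = (-1)^{nk+n+1} * d^\nabla *$, the sign from moving the $1$-form $\alpha^i$ through the Hodge star, and the double-star factor $(-1)^{k(n-k)}$; these must conspire to give a net $-1$ independent of $n$ and $k$, which indeed they do since $k(k+1)$ is always even. Everything else is a routine application of linearity and the Leibniz rule.
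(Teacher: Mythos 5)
Your proof is correct. For the first identity your argument is essentially the same as the paper's: reduce to simple tensors $\alpha\otimes\zeta$, use $d\alpha=\sum_i\alpha^i\wedge D_{e_i}\alpha$ for the Levi-Civita connection, commute $\alpha^i$ past $\alpha$ to collect the bundle-connection term into $(-1)^k\alpha\wedge\nabla\zeta$.

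For the second identity you take a genuinely different route. The paper repeats the Leibniz decomposition on $\alpha\otimes\zeta$ inside $(-1)^{nk+n+1}*d^\nabla*$, invoking the \emph{scalar} codifferential identity $\delta\alpha=-\sum_i e_i\rfloor D_{e_i}\alpha$ as known for the form factor, and then handles the bundle-connection part by a separate Hodge-star sign computation on $*((*\alpha)\wedge\alpha^i)$. You instead stay at the operator level: substitute the already-proved frame expression for $d^\nabla$ into $\delta^\nabla=(-1)^{nk+n+1}*d^\nabla*$, push $\nabla_{e_i}$ through $*$ (valid because $D$ is metric and thus $*$-parallel, and the $E$-connection acts only on the bundle factor), and apply $*(\alpha^i\wedge\gamma)=(-1)^{|\gamma|}\iota_{e_i}(*\gamma)$ together with $**=(-1)^{k(n-k)}$. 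Your sign bookkeeping is correct: $nk+n+1+(n-k)(k+1)\equiv 1+k(k+1)\equiv 1\pmod 2$. The paper's route is slightly shorter because it reuses the scalar $d$/$\delta$ formulas as black boxes; your route is more self-contained (it derives the second identity cleanly from the first with no appeal to the scalar codifferential formula) at the cost of a single extra Hodge identity. Both are valid and standard.
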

\begin{proof}Assume $\beta=\alpha\otimes \zeta\in \Omega^k(E)$. Then
\beastar
d^{\nabla}(\alpha\otimes \zeta)&=&(d\alpha)\otimes \zeta+(-1)^k\alpha\wedge\nabla\zeta\\
&=&\sum_i\alpha^i\wedge \nabla_{e_i}\alpha\otimes\zeta+(-1)^k\alpha\wedge\nabla\zeta.
\eeastar
On the other hand,
\beastar
\sum_i\alpha^i\wedge \nabla_{e_i}(\alpha\otimes\zeta)&=&
\sum_i\alpha^i\wedge\nabla_{e_i}\alpha\otimes\zeta+\alpha^i\wedge\alpha\otimes\nabla_{e_i}\zeta\\
&=&\sum_i\alpha^i\wedge \nabla_{e_i}\alpha\otimes\zeta+(-1)^k\alpha\wedge\nabla\zeta,
\eeastar
so we have proved the first statement.

For the second equality, we compute
\beastar
\delta^{\nabla}(\alpha\otimes\zeta)&=&(-1)^{nk+n+1}*d^{\nabla}*(\alpha\otimes\zeta)\\
&=&(\delta\alpha)\otimes\zeta+(-1)^{nk+n+1}*(-1)^{n-k}(*\alpha)\wedge\nabla\zeta\\
&=&-\sum_ie_i\rfloor \nabla_{e_i}\alpha\otimes\zeta+\sum_i(-1)^{nk-k+1}*((*\alpha)\wedge\alpha^i)\otimes\nabla_{e_i}\zeta\\
&=&-\sum_ie_i\rfloor \nabla_{e_i}\alpha\otimes\zeta-\sum_ie_i\rfloor \alpha\otimes\nabla_{e_i}\zeta\\
&=&-\sum_ie_i\rfloor \nabla_{e_i}(\alpha\otimes\zeta).
\eeastar
\end{proof}

\begin{thm}[Weitzenb\"ock Formula]\label{thm:weitzenbock}
Assume $\{e_i\}$ is an orthonormal frame of $P$, and $\{\alpha^i\}$ is the dual frame. Then
when applied to $E$-valued forms
\beastar
\Delta^{\nabla}=-Tr\nabla^2+\sum_{i,j}\alpha^j\wedge (e_i\rfloor R(e_i,e_j)\cdot)
\eeastar
where $R$ is the curvature tensor of the bundle $E$ with respect to the connection $\nabla$.
\end{thm}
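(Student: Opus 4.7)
My plan is to verify the identity pointwise, using the expressions from Lemma \ref{lem:d-delta} and a \emph{synchronous} orthonormal frame at the evaluation point.

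First, I would fix a point $p \in P$ and choose the orthonormal frame $\{e_i\}$ so that $\nabla_{e_j} e_i(p) = 0$ for all $i,j$ (such a frame always exists: start from any orthonormal basis at $p$ and parallel-transport it radially, or orthonormalize a normal-coordinate frame). Under this choice, the dual frame $\{\alpha^i\}$ satisfies $\nabla_{e_j}\alpha^i(p) = 0$ as well, so that at $p$ the operators $\nabla_{e_j}$ commute through both $\alpha^j\wedge(\,\cdot\,)$ and $e_i\rfloor(\,\cdot\,)$. Also at $p$ one has $\mathrm{Tr}\,\nabla^2 s = \sum_i \nabla_{e_i}\nabla_{e_i} s$, since the correction term $\nabla_{\nabla_{e_i}e_i}s$ vanishes.

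Second, I would substitute the formulas of Lemma \ref{lem:d-delta} into $\Delta^{\nabla} = d^{\nabla}\delta^{\nabla} + \delta^{\nabla}d^{\nabla}$ and use the synchronous property to pull the frame tensors past the covariant derivatives. This yields, at $p$,
\begin{equation*}
\Delta^{\nabla} s \;=\; -\sum_{i,j}\Bigl(\alpha^j \wedge \bigl(e_i \rfloor \nabla_{e_j}\nabla_{e_i} s\bigr) \;+\; e_i \rfloor \bigl(\alpha^j \wedge \nabla_{e_i}\nabla_{e_j} s\bigr)\Bigr).
\end{equation*}

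Third, the key algebraic step is the anti-commutation identity
\begin{equation*}
e_i \rfloor (\alpha^j \wedge T) \;+\; \alpha^j \wedge (e_i \rfloor T) \;=\; \delta^{j}_{i}\, T
\end{equation*}
valid for any $E$-valued form $T$. Applying this to $T = \nabla_{e_i}\nabla_{e_j} s$ inside the second summand and rearranging, the terms regroup as one piece containing $\nabla_{e_j}\nabla_{e_i} - \nabla_{e_i}\nabla_{e_j}$ (under a wedge-interior product sandwich) plus a diagonal piece giving $-\sum_i \nabla_{e_i}\nabla_{e_i} s$. The diagonal piece is exactly $-\mathrm{Tr}\,\nabla^2 s$ at $p$ by the observation above.

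Finally, I would identify the commutator. Because the frame is synchronous at $p$ and the Levi-Civita connection on $P$ is torsion-free, $[e_j,e_i](p)=0$, so at $p$
\begin{equation*}
\nabla_{e_j}\nabla_{e_i} s - \nabla_{e_i}\nabla_{e_j} s \;=\; R(e_j,e_i)\, s \;=\; -R(e_i,e_j)\, s.
\end{equation*}
Substituting this back produces the curvature term $\sum_{i,j} \alpha^j\wedge\bigl(e_i\rfloor R(e_i,e_j)s\bigr)$ with the correct sign, matching the claimed formula at $p$. Since both sides of the asserted identity are tensorial in $s$ and independent of the choice of frame, and $p$ was arbitrary, the formula holds globally.

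The only delicate point I anticipate is the bookkeeping of signs when moving $e_i\rfloor$ past $\alpha^j\wedge$ on forms of mixed degree and when identifying which ordering of indices in $R$ appears; this is why the synchronous-frame reduction and the $\delta^j_i$ contraction identity are done before invoking the curvature, so that the only remaining manipulation is a clean commutator.
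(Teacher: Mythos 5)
Your proof is correct and follows essentially the same route as the paper: reduce to a pointwise identity, pick a frame whose connection coefficients vanish at the point (your synchronous condition $\nabla_{e_j}e_i(p)=0$ is the same as the paper's normal-coordinate condition $\Gamma^k_{ij}(p)=0$), substitute the expressions from Lemma \ref{lem:d-delta} into $\Delta^{\nabla}=d^{\nabla}\delta^{\nabla}+\delta^{\nabla}d^{\nabla}$, apply the anti-derivation identity $e_i\rfloor(\alpha^j\wedge T)+\alpha^j\wedge(e_i\rfloor T)=\delta^j_i T$, and identify the resulting antisymmetrized second covariant derivative with curvature using $[e_i,e_j](p)=0$. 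The only stylistic difference is that you assemble both halves of the Laplacian before invoking the anti-commutation identity, whereas the paper applies it already inside $\delta^{\nabla}d^{\nabla}$, but the algebra is the same.
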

\begin{proof}Since the right hand side of the equality is independent of the choice of orthonormal basis,
and it is a pointwise formula,
we can take the normal coordinates $\{e_i\}$ at a point $p\in P$ (and $\{\alpha^i\}$ the dual basis), i.e., $h_{ij}:=h(e_i, e_j)(p)=\delta_{ij}$ and $dh_{i,j}(p)=0$,
 and prove that the given formula holds at $p$ for such coordinates. For the Levi-Civita connection, the condition $dh_{i,j}(p)=0$
 of the normal coordinate is equivalent to letting
$\Gamma^k_{i,j}(p):=\alpha^k(D_{e_i}e_j)(p)$ be $0$.

For $\beta\in \Omega^k(E)$, using Lemma \ref{lem:d-delta} we calculate
\beastar
\delta^{\nabla}d^{\nabla}\beta&=&-\sum_{i,j}e_i\rfloor \nabla_{e_i}(\alpha^j\wedge\nabla_{e_j}\beta)\\
&=&-\sum_{i,j}e_i\rfloor (D_{e_i}\alpha^j \wedge\nabla_{e_j}\beta+\alpha^j\wedge \nabla_{e_i}\nabla_{e_j}\beta).
\eeastar
At the point $p$, the first term vanishes, and we get
\beastar
\delta^{\nabla}d^{\nabla}\beta(p)&=&-\sum_{i,j}e_i\rfloor (\alpha^j\wedge \nabla_{e_i}\nabla_{e_j}\beta)(p)\\
&=&-\sum_i\nabla_{e_i}\nabla_{e_i}\beta(p)+\sum_{i,j}\alpha^j\wedge (e_i\rfloor\nabla_{e_i}\nabla_{e_j}\beta)(p)\\
&=&-\sum_i\nabla^2_{e_i, e_i}\beta(p)+\sum_{i,j}\alpha^j\wedge (e_i\rfloor\nabla_{e_i}\nabla_{e_j}\beta)(p).
\eeastar
Also,
\beastar
d^{\nabla}\delta^{\nabla}\beta&=&-\sum_{i,j}\alpha^i\wedge \nabla_{e_i}(e_j\rfloor \nabla_{e_j}\beta)\\
&=&-\sum_{i,j}\alpha^i\wedge (e_j\rfloor \nabla_{e_i}\nabla_{e_j}\beta)
-\sum_{i,j}\alpha^i\wedge ((D_{e_i}e_j) \rfloor \nabla_{e_j}\beta).
\eeastar
As before, at the point $p$, the second term vanishes.

Now we sum the two parts $d^\nabla\delta^\nabla$ and $\delta^\nabla d^\nabla$ and get
$$
\Delta^{\nabla}\beta(p)=-\sum_i\nabla^2_{e_i, e_i}\beta(p)
+\sum_{i,j}\alpha^j\wedge (e_i\rfloor R(e_i,e_j)\beta)(p).
$$
\end{proof}

In particular, when acting on zero forms, i.e., sections of $E$, the second term on the right hand side vanishes, and there is
$$
\Delta^{\nabla}=-Tr\nabla^2.
$$
When acting on full rank forms, the above also holds by easy checking.

When $\beta\in \Omega^1(E)$, which is the case we use in this article, there is the following
\begin{cor}
For $\beta=\alpha\otimes \zeta\in \Omega^1(E)$, the Weizenb\"ock formula can be written as
$$
\Delta^{\nabla}\beta=-\sum_i\nabla^2_{e_i, e_i}\beta
+\ric^{D*}(\alpha)\otimes\zeta+\ric^{\nabla}\beta,
$$
where $\ric^{D*}$ denotes the adjoint of $\ric^D$, which acts on $1$-forms.

In particular, when $P$ is a surface, we have
\bea\label{eq:bochner-weitzenbock}
\Delta^{\nabla}\beta&=&-\sum_i\nabla^2_{e_i, e_i}\beta
+K\cdot\beta+\ric^{\nabla}(\beta)\nonumber\\
-\frac{1}{2}\Delta|\beta|^2&=&|\nabla \beta|^2-\langle \Delta^\nabla \beta, \beta\rangle +K\cdot|\beta|^2+\langle\ric^{\nabla}(\beta), \beta\rangle,
\eea
where $K$ is the Gaussian curvature of the surface $P$, and $\ric^{\nabla}(\beta):=\alpha\otimes \Sigma_{i, j}R(e_i, e_j)\zeta$.
\end{cor}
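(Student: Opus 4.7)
\medskip
\noindent\textbf{Proof proposal.} The plan is to specialize the general Weitzenb\"ock formula of Theorem \ref{thm:weitzenbock} to $\beta = \alpha \otimes \zeta \in \Omega^1(E)$ and then identify the curvature contraction as a Ricci-type term, after which the surface case and the Bochner identity are immediate.

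First, I would start from
$$
\Delta^{\nabla}\beta = -\sum_i \nabla^2_{e_i,e_i}\beta + \sum_{i,j}\alpha^j \wedge \bigl(e_i \rfloor R(e_i,e_j)\beta\bigr),
$$
where $R$ is the curvature of the induced connection on $T^*P \otimes E$. Since this connection is the tensor product of the Levi-Civita connection on $T^*P$ and the connection $\nabla$ on $E$, the curvature splits as
$$
R(e_i,e_j)(\alpha \otimes \zeta) = \bigl(R^{T^*P}(e_i,e_j)\alpha\bigr)\otimes\zeta + \alpha \otimes \bigl(R^{\nabla}(e_i,e_j)\zeta\bigr).
$$
Plugging this decomposition into the curvature term gives two contributions, which I would evaluate separately.

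For the $E$-piece, the contraction $e_i \rfloor (\alpha \otimes R^\nabla(e_i,e_j)\zeta) = \alpha(e_i)\, \alpha \otimes R^\nabla(e_i,e_j)\zeta$ wedged with $\alpha^j$ assembles, after using $\sum_i \alpha(e_i) e_i$ = (the vector dual to $\alpha$), into the term $\alpha \otimes \sum_{i,j} R^\nabla(e_i,e_j)\zeta$ that the paper calls $\operatorname{Ric}^{\nabla}(\beta)$. For the Riemann-curvature piece, the same contraction of $R^{T^*P}(e_i,e_j)\alpha$ against $e_i$ and then wedged against $\alpha^j$ is the standard algebraic manipulation that produces the Ricci endomorphism acting on $\alpha$; more precisely, using the first Bianchi identity and the symmetries of $R$ one identifies
$$
\sum_{i,j}\alpha^j \wedge \bigl(e_i \rfloor (R^{T^*P}(e_i,e_j)\alpha \otimes \zeta)\bigr) = \operatorname{Ric}^{D*}(\alpha)\otimes \zeta,
$$
where $\operatorname{Ric}^{D*}$ is the adjoint of the Ricci endomorphism (equivalently, Ricci acting on cotangent vectors by the metric identification). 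This proves the first displayed identity.

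When $\dim P = 2$, the Ricci tensor is $K h$, so $\operatorname{Ric}^{D*}\alpha = K\alpha$ and $\operatorname{Ric}^{D*}(\alpha)\otimes\zeta = K\beta$, giving the first formula on a surface. For the second, I would start from the pointwise identity
$$
-\tfrac{1}{2}\Delta|\beta|^2 = |\nabla\beta|^2 + \langle \operatorname{Tr}\nabla^2 \beta,\beta\rangle
$$
stated at the beginning of this appendix, and substitute $\operatorname{Tr}\nabla^2 \beta = -\Delta^\nabla \beta + K\beta + \operatorname{Ric}^\nabla(\beta)$ from the first part. Rearranging gives exactly the claimed Bochner--Weitzenb\"ock identity \eqref{eq:bochner-weitzenbock}.

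The main obstacle, as I see it, is the bookkeeping in the contraction $\sum_{i,j}\alpha^j \wedge (e_i \rfloor R^{T^*P}(e_i,e_j)\alpha)\otimes \zeta$: the signs and the use of Bianchi to recognize this as Ricci acting on a $1$-form are where mistakes are easy. Everything else is either the splitting of tensor-product curvature or the surface identity $\operatorname{Ric} = K h$, both of which are standard.
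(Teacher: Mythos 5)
The paper itself gives no proof of this corollary, so your proposal is the standard specialization of the general Weitzenb\"ock theorem and is essentially sound in structure: split the curvature of $T^*P\otimes E$ into its $T^*P$- and $E$-pieces, identify the $T^*P$-piece with $\ric^{D*}(\alpha)\otimes\zeta$, then specialize $\ric^D = K\cdot\id$ in dimension two and substitute into $-\tfrac12\Delta|\beta|^2=|\nabla\beta|^2+\langle\operatorname{Tr}\nabla^2\beta,\beta\rangle$. Two small corrections are worth making. First, you do not need the first Bianchi identity anywhere: the contraction
$$
\textstyle\sum_{i,j}\alpha^j\,(R^{T^*P}(e_i,e_j)\alpha)(e_i)=\sum_j\alpha\bigl(\sum_i R^{TP}(e_j,e_i)e_i\bigr)\alpha^j
$$
is already the Ricci endomorphism acting through $\alpha$; no curvature symmetry beyond the definition of Ricci is invoked. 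Second, the intermediate line $e_i\rfloor(\alpha\otimes R^\nabla(e_i,e_j)\zeta)=\alpha(e_i)\,\alpha\otimes R^\nabla(e_i,e_j)\zeta$ is miswritten: interior product with $e_i$ lands in $\Omega^0(E)$, so the correct identity is $e_i\rfloor(\alpha\otimes R^\nabla(e_i,e_j)\zeta)=\alpha(e_i)R^\nabla(e_i,e_j)\zeta$, and wedging with $\alpha^j$ yields $\sum_{i,j}\alpha(e_i)\,\alpha^j\otimes R^\nabla(e_i,e_j)\zeta$. This last expression is what the computation actually produces, and it is what $\ric^\nabla(\beta)$ should mean, namely $\ric^\nabla(\beta)(X)=\sum_i R^\nabla(e_i,X)\beta(e_i)$. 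Note that the paper's literal notation $\alpha\otimes\sum_{i,j}R(e_i,e_j)\zeta$ is identically zero by the antisymmetry of $R$ in $(i,j)$, so it must be read as shorthand for the weighted contraction you derived; it is worth flagging this rather than reproducing the shorthand as though it were the output of the calculation.
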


\section{Wedge products of vector-valued forms}
\label{appen:forms}

In this section, we continue with the setting from Appendix \ref{appen:weitzenbock}.
To be specific, we assume $(P, h)$ is a Riemannian manifold of dimension $n$ with metric $h$, and denote by $D$
the Levi-Civita connection. $E\to P$ is a vector bundle with inner product $\langle \cdot, \cdot\rangle$
and $\nabla$ is a connection of $E$ which is compatible with $\langle \cdot, \cdot\rangle$.

We remark that we include this section for the sake of completeness of our treatment of vector valued forms, and
the content of this appendix is not used in any section of this article.
Actually one can derive exponential decay using the differential inequality method from the formulas we provide here.
We leave the proof to interested reader.

The wedge product of forms can be extended to $E$-valued forms by defining
\beastar
\wedge&:&\Omega^{k_1}(E)\times \Omega^{k_2}(E)\to \Omega^{k_1+k_2}(P)\\
\beta_1\wedge\beta_2&=&\langle \zeta_1, \zeta_2\rangle\,\alpha_1\wedge\alpha_2,
\eeastar
where $\beta_1=\alpha_1\otimes\zeta_1\in \Omega^{k_1}(E)$ and $\beta_2=\alpha_2\otimes\zeta_2\in \Omega^{k_2}(E)$
are $E$-valued forms.

\begin{lem}\label{lem:inner-star}
For $\beta_1, \beta_2\in \Omega^k(E)$,
$$
\langle \beta_1, \beta_2\rangle=*(\beta_1\wedge *\beta_2).
$$
\end{lem}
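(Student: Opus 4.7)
The plan is to reduce the identity to the already–known scalar Hodge identity $\langle \alpha_1, \alpha_2\rangle = *(\alpha_1 \wedge *\alpha_2)$ for ordinary $k$-forms on $(P,h)$, using bilinearity of both sides in $\beta_1, \beta_2$.

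First, since both sides of the claimed identity are bilinear in $(\beta_1,\beta_2)$ over $C^\infty(P)$, it suffices to verify the equality on decomposable $E$-valued forms $\beta_i = \alpha_i \otimes \zeta_i$ with $\alpha_i \in \Omega^k(P)$ and $\zeta_i \in \Gamma(E)$. Locally every $E$-valued $k$-form is a finite sum of such decomposables (e.g.\ by picking a local frame of $E$), so proving the identity on decomposables proves it in general.

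Next, I would simply unwind the definitions. By the definition of the extended Hodge star from Appendix \ref{appen:weitzenbock},
$$
*\beta_2 = (*\alpha_2) \otimes \zeta_2,
$$
and by the definition of the wedge product for $E$-valued forms given at the start of Appendix \ref{appen:forms},
$$
\beta_1 \wedge *\beta_2 = \langle \zeta_1, \zeta_2\rangle \, \alpha_1 \wedge *\alpha_2.
$$
Applying the scalar Hodge star (which passes through the $C^\infty(P)$-factor $\langle \zeta_1, \zeta_2\rangle$) and invoking the standard scalar identity $\alpha_1 \wedge *\alpha_2 = \langle \alpha_1, \alpha_2\rangle_h \, dV$, one obtains
$$
*(\beta_1 \wedge *\beta_2) = \langle \zeta_1, \zeta_2\rangle \, *(\alpha_1 \wedge *\alpha_2) = \langle \zeta_1, \zeta_2\rangle \, \langle \alpha_1, \alpha_2\rangle_h.
$$

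Finally I would match this to the left hand side. The inner product on $E$-valued forms used throughout the paper is the tensor product inner product
$$
\langle \alpha_1 \otimes \zeta_1, \alpha_2 \otimes \zeta_2\rangle = \langle \alpha_1, \alpha_2\rangle_h \, \langle \zeta_1, \zeta_2\rangle,
$$
so the right hand side of the displayed computation is exactly $\langle \beta_1,\beta_2\rangle$, completing the proof. There is no genuine obstacle here: the only point one needs to be careful about is confirming that the extended Hodge star does not introduce any sign when it acts on the $E$-valued form $\beta_2 = \alpha_2 \otimes \zeta_2$ (it does not, since the $E$-factor is a scalar with respect to $*$), after which the identity follows purely formally from the scalar case.
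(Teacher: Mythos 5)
Your proposal is correct and follows essentially the same route as the paper: reduce to decomposable $\beta_i = \alpha_i \otimes \zeta_i$, unwind the definitions of the extended wedge and Hodge star, apply the scalar identity $*(\alpha_1\wedge *\alpha_2) = h(\alpha_1,\alpha_2)$, and recognize the result as the tensor-product inner product. The paper omits the explicit appeal to bilinearity for the reduction to decomposables, but otherwise the two arguments coincide line for line.
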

\begin{proof}
Write $\beta_1=\alpha_1\otimes\zeta_1$ and $\beta_2=\alpha_2\otimes\zeta_2$. Then
\beastar
*(\beta_1\wedge *\beta_2)&=&*\big((\alpha_1\otimes\zeta_1)\wedge ((*\alpha_2)\otimes\zeta_2)\big)\\
&=&*(\langle\zeta_1, \zeta_2\rangle\,\alpha_1\wedge *\alpha_2)\\
&=&\langle\zeta_1, \zeta_2\rangle\,*(\alpha_1\wedge *\alpha_2)\\
&=&\langle\zeta_1, \zeta_2\rangle\, h(\alpha_1, \alpha_2)\\
&=&\langle \beta_1, \beta_2\rangle.
\eeastar
\end{proof}

The following lemmas exploit the compatibility of $\nabla$ with the inner product $\langle \cdot, \cdot\rangle$.

\begin{lem}
$$
d(\beta_1\wedge\beta_2)=d^\nabla\beta_1\wedge \beta_2+(-1)^{k_1}\beta_1\wedge d^\nabla\beta_2,
$$
where $\beta_1\in \Omega^{k_1}(E)$ and $\beta_2\in \Omega^{k_2}(E)$
are $E$-valued forms.
\end{lem}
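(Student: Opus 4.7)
The plan is to reduce the identity to the case of simple tensors $\beta_1=\alpha_1\otimes\zeta_1$ and $\beta_2=\alpha_2\otimes\zeta_2$ by bilinearity, then unfold both sides using the defining formula $d^\nabla(\alpha\otimes\zeta)=d\alpha\otimes\zeta+(-1)^k\alpha\wedge\nabla\zeta$ together with the fact that $\nabla$ is compatible with $\langle\cdot,\cdot\rangle$. The crucial input from compatibility is the identity
\[
d\langle\zeta_1,\zeta_2\rangle=\langle\nabla\zeta_1,\zeta_2\rangle+\langle\zeta_1,\nabla\zeta_2\rangle,
\]
where the two right-hand terms are ordinary scalar $1$-forms obtained by contracting the $E$-valued $1$-form $\nabla\zeta_i$ against $\zeta_{3-i}$ in the fiber.

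Next, I would start from the left-hand side. By the definition of the wedge on $E$-valued forms,
\[
\beta_1\wedge\beta_2=\langle\zeta_1,\zeta_2\rangle\,\alpha_1\wedge\alpha_2,
\]
so that the usual Leibniz rule for scalar forms, combined with the compatibility identity above, gives
\[
d(\beta_1\wedge\beta_2)=\bigl(\langle\nabla\zeta_1,\zeta_2\rangle+\langle\zeta_1,\nabla\zeta_2\rangle\bigr)\wedge\alpha_1\wedge\alpha_2+\langle\zeta_1,\zeta_2\rangle\bigl(d\alpha_1\wedge\alpha_2+(-1)^{k_1}\alpha_1\wedge d\alpha_2\bigr).
\]

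Then I would expand the right-hand side term by term. Writing $\nabla\zeta_1$ and $\nabla\zeta_2$ as local sums $\sum_i\beta^i\otimes\eta_i$ and $\sum_j\gamma^j\otimes\mu_j$, the definition of $d^\nabla$ and the wedge on $E$-valued forms yield
\[
d^\nabla\beta_1\wedge\beta_2=\langle\zeta_1,\zeta_2\rangle\,d\alpha_1\wedge\alpha_2+(-1)^{k_1}\alpha_1\wedge\langle\nabla\zeta_1,\zeta_2\rangle\wedge\alpha_2,
\]
\[
\beta_1\wedge d^\nabla\beta_2=\langle\zeta_1,\zeta_2\rangle\,\alpha_1\wedge d\alpha_2+(-1)^{k_2}\alpha_1\wedge\alpha_2\wedge\langle\zeta_1,\nabla\zeta_2\rangle.
\]
The main (but entirely routine) obstacle is bookkeeping of the Koszul signs that arise when shuffling the $1$-forms $\langle\nabla\zeta_i,\zeta_{3-i}\rangle$ past the $k_i$-forms $\alpha_i$. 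Specifically, one applies $\alpha_1\wedge\langle\nabla\zeta_1,\zeta_2\rangle=(-1)^{k_1}\langle\nabla\zeta_1,\zeta_2\rangle\wedge\alpha_1$ and the analogous identity of bidegree $k_1+k_2$ on the second piece; the prefactors $(-1)^{k_1}$ and $(-1)^{k_1+k_2}$ built in from the definition of $d^\nabla$ and the Leibniz rule conspire so that both of these shuffles produce sign $+1$ after combining.

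Finally, assembling $d^\nabla\beta_1\wedge\beta_2+(-1)^{k_1}\beta_1\wedge d^\nabla\beta_2$ one sees that the $\langle\zeta_1,\zeta_2\rangle$-terms reconstitute $\langle\zeta_1,\zeta_2\rangle\,d(\alpha_1\wedge\alpha_2)$, while the remaining two terms assemble into $d\langle\zeta_1,\zeta_2\rangle\wedge\alpha_1\wedge\alpha_2$ by the compatibility identity. This exactly matches the expansion of $d(\beta_1\wedge\beta_2)$ above, and bilinearity extends the identity from simple tensors to general $E$-valued forms, completing the proof.
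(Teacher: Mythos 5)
Your proposal is correct and follows essentially the same route as the paper's proof: reduce to simple tensors $\beta_i=\alpha_i\otimes\zeta_i$, use metric compatibility to split $d\langle\zeta_1,\zeta_2\rangle$, and match the expansions of both sides term by term. Your version is slightly more explicit about the Koszul sign bookkeeping when shuffling $\langle\nabla\zeta_i,\zeta_{3-i}\rangle$ past the $\alpha_i$, but the content is identical.
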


\begin{proof}
We write $\beta_1=\alpha_1\otimes\zeta_1$ and $\beta_2=\alpha_2\otimes\zeta_2$ and calculate
\beastar
d(\beta_1\wedge\beta_2)&=&d(\langle \zeta_1, \zeta_2\rangle\,\alpha_1\wedge\alpha_2)\\
&=&d\langle \zeta_1, \zeta_2\rangle\wedge\alpha_1\wedge\alpha_2+\langle \zeta_1, \zeta_2\rangle\,d(\alpha_1\wedge\alpha_2)\\
&=&\langle \nabla\zeta_1, \zeta_2\rangle\wedge\alpha_1\wedge\alpha_2+\langle \zeta_1, \nabla\zeta_2\rangle\wedge\alpha_1\wedge\alpha_2\\
&{}&+\langle \zeta_1, \zeta_2\rangle\,d\alpha_1\wedge\alpha_2+(-1)^{k_1}\langle \zeta_1, \zeta_2\rangle\,\alpha_1\wedge d\alpha_2,
\eeastar
while
\beastar
d^\nabla\beta_1\wedge \beta_2&=&d^\nabla(\alpha_1\otimes\zeta_1)\wedge(\alpha_2\otimes\zeta_2)\\
&=&(d\alpha_1\otimes\zeta_1+(-1)^{k_1}\alpha_1\wedge\nabla\zeta_1)\wedge(\alpha_2\otimes\zeta_2)\\
&=&\langle\zeta_1, \zeta_2\rangle\,d\alpha_1\wedge\alpha_2+\langle\nabla\zeta_1, \zeta_2\rangle\wedge\alpha_1\wedge\alpha_2.
\eeastar
A similar calculation shows that
$$
(-1)^{k_1}\beta_1\wedge d^\nabla\beta_2=
(-1)^{k_1}\langle \zeta_1, \zeta_2\rangle\,\alpha_1\wedge d\alpha_2+\langle \zeta_1, \nabla\zeta_2\rangle\wedge\alpha_1\wedge\alpha_2.
$$
Summing these up,  we get the equality we want.
\end{proof}

\begin{lem}\label{lem:metric-property}
Assume $\beta_0\in \Omega^k(E)$ and $\beta_1\in \Omega^{k+1}(E)$, then we have
$$
\langle d^\nabla \beta_0, \beta_1\rangle-(-1)^{n(k+1)}\langle \beta_0, \delta^\nabla\beta_1\rangle
=*d(\beta_0\wedge *\beta_1).
$$
\end{lem}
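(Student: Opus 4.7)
The plan is to combine the Leibniz rule established in the preceding lemma with Lemma \ref{lem:inner-star} (expressing the bundle inner product via a single $\ast$ and a wedge) and the very definition of $\delta^\nabla$ from the beginning of this appendix. First I would apply the product rule to $\beta_0 \wedge \ast \beta_1$, noting that $\ast\beta_1 \in \Omega^{n-k-1}(E)$ while $\beta_0\in\Omega^{k}(E)$, so
$$d(\beta_0 \wedge \ast\beta_1) = d^\nabla \beta_0 \wedge \ast\beta_1 + (-1)^{k}\, \beta_0 \wedge d^\nabla(\ast\beta_1).$$
Taking $\ast$ of both sides yields two terms: one that can be read directly as $\langle d^\nabla\beta_0, \beta_1\rangle$ by Lemma \ref{lem:inner-star}, since both $d^\nabla \beta_0$ and $\beta_1$ live in $\Omega^{k+1}(E)$; and a second term $(-1)^{k}\,\ast(\beta_0 \wedge d^\nabla\ast\beta_1)$ that needs to be converted into an inner product against $\delta^\nabla\beta_1$.

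The key trick for the second term is to turn $d^\nabla \ast\beta_1$ back into the Hodge-dual of a $k$-form so that Lemma \ref{lem:inner-star} applies once more. Using the Riemannian identity $\ast\ast\alpha = (-1)^{k'(n-k')}\alpha$ on $\Omega^{k'}(E)$ with $k' = n-k$, and the paper's definition $\delta^\nabla = (-1)^{nk+n+1}\ast d^\nabla \ast$ applied in degree $k+1$, I would derive
$$d^\nabla \ast \beta_1 \;=\; (-1)^{(n-k)k}\, \ast(\ast d^\nabla \ast\beta_1)\;=\; (-1)^{(n-k)k + n(k+1)+n+1}\, \ast \delta^\nabla \beta_1.$$
Substituting this into $(-1)^{k}\,\ast(\beta_0 \wedge d^\nabla\ast\beta_1)$ and invoking Lemma \ref{lem:inner-star} once more on $\ast(\beta_0 \wedge \ast \delta^\nabla \beta_1) = \langle \beta_0, \delta^\nabla\beta_1\rangle$ produces a numerical factor $(-1)^{k + (n-k)k + n(k+1)+n+1}$ multiplying $\langle \beta_0,\delta^\nabla\beta_1\rangle$, which one then simplifies mod $2$ to match the claimed coefficient $-(-1)^{n(k+1)}$.

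The main obstacle is simply the careful bookkeeping of signs: three separate sign factors — the $(-1)^{k}$ from the Leibniz rule, the $(-1)^{(n-k)k}$ from the double Hodge star applied to a form of the appropriate degree, and the $(-1)^{n(k+1)+n+1}$ built into the definition of $\delta^\nabla$ — must conspire to give the single sign $-(-1)^{n(k+1)}$ in the final identity. No geometry beyond the two preceding lemmas is needed, and no local coordinate computation is necessary; the whole argument is a two-line manipulation once the identification $d^\nabla \ast = \pm \ast \delta^\nabla$ is in hand. After this, the identity will follow in the stated coordinate-free form, which is what is needed for the integration-by-parts calculations used (e.g.) in the Weitzenb\"ock-type identity \eqref{eq:bochner-weitzenbock-e}.
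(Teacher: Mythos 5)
Your route exactly mirrors the paper's own proof: apply the Leibniz rule for $d^\nabla$ to $\beta_0\wedge *\beta_1$, take $*$, read off $\langle d^\nabla\beta_0,\beta_1\rangle$ from Lemma~\ref{lem:inner-star}, and convert the remaining term into an inner product against $\delta^\nabla\beta_1$ via a double Hodge star and the definition of $\delta^\nabla$. Everything up to the last step is correct, including the collected exponent $k+(n-k)k+n(k+1)+n+1$. What your sketch does not do is actually reduce that exponent modulo $2$: since $k-k^2$ is always even and $2nk+2n$ vanishes modulo $2$, the exponent is congruent to $1$, so the coefficient you produce is $-1$ for every $n$ and $k$, not $-(-1)^{n(k+1)}$. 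These two agree precisely when $n(k+1)$ is even --- which always holds on a surface, $n=2$, and so never causes trouble anywhere the paper uses the formula --- but they differ for, say, $n=3$, $k=0$. Thus the stated lemma carries an extraneous factor $(-1)^{n(k+1)}$, and the paper's own displayed calculation commits the matching error in its middle line, substituting $(-1)^n$ where $(-1)^{nk}$ belongs in passing from $(-1)^k*(\beta_0\wedge d^\nabla*\beta_1)$ to $(-1)^n*(\beta_0\wedge*(*d^\nabla*\beta_1))$. So the gap in your proposal is the same one already present in the target: the coordinate-free argument is sound, and the correct conclusion, valid for all $n$ and $k$, is simply $*d(\beta_0\wedge*\beta_1)=\langle d^\nabla\beta_0,\beta_1\rangle-\langle\beta_0,\delta^\nabla\beta_1\rangle$.
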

\begin{proof}
We calculate
\beastar
*d(\beta_0\wedge *\beta_1)&=&*\big(d^\nabla\beta_0\wedge*\beta_1+(-1)^k\beta_0\wedge (d^\nabla*\beta_1)\big)\\
&=&\langle d^\nabla\beta_0, \beta_1\rangle
+(-1)^n*\big(\beta_0\wedge *(*d^\nabla *\beta_1\big)\\
&=&\langle d^\nabla\beta_0, \beta_1\rangle
-(-1)^{n(k+1)}\langle\beta_0, \delta^\nabla\beta_1\rangle.
\eeastar

\end{proof}

\section{Local coercive estimates}
\label{appen:local-coercive}

In this appendix, we
give the proof of Proposition \ref{prop:coercive-L2} which we restate here.

\begin{prop}\label{prop:coercive-L2app}
For any open domains $D_1$ and $D_2$ in $\dot\Sigma$ satisfying $\overline{D_1}\subset D_2$,
$$
\|\nabla(dw)\|^2_{L^2(D_1)}\leq C_1(D_1, D_2)\|dw\|^2_{L^2(D_2)}+C_2(D_1, D_2)\|dw\|^4_{L^4(D_2)}
$$
for any contact instanton $w$,
where $C_1(D_1, D_2)$ and $C_2(D_1, D_2)$ are some constants, which are independent of $w$.
\end{prop}
\begin{proof}
For the pair of given domains $D_1$ and $D_2$, we choose another domain $D$ such that
$\overline D_1 \subset D \subset \overline D \subset D_2$ and a smooth cut-off function $\chi:D_2\to \R$ such that
$\chi\geq 0$ and
$\chi\equiv 1$ on $\overline{D_1}$, $\chi\equiv 0$ on $D_2-D$.
Multiplying  \eqref{eq:higher-derivative} by $\chi^2$ and integrating over $D$, we get
\beastar
\int_{D_1}|\nabla(dw)|^2&\leq&\int_{D}\chi^2|\nabla(dw)|^2\\
&\leq&C_1\int_{D}\chi^2|dw|^4-4\int_{D}K\chi^2|dw|^2-2\int_{D}\chi^2\Delta e\\
&\leq&C_1\int_{D_2}|dw|^4+4\|K\|_{L^\infty(\dot\Sigma)}\int_{D_2}|dw|^2-2\int_{D}\chi^2\Delta e
\eeastar
where $C_1$ is the same constant as the one appearing in \eqref{eq:higher-derivative}.

We now deal with the last term $\int_{D_2}\chi^2 \Delta e$.
Since
\beastar
\chi^2\Delta e\, dA&=&*(\chi^2 \Delta e)=\chi^2 *\Delta e
=-\chi^2 d*de\\
&=&-d(\chi^2 *de)+2\chi d\chi\wedge (*de),
\eeastar
we get
$$
\int_{D}\chi^2\Delta e\, dA=\int_{D}2\chi d\chi\wedge (*de)
$$
by integrating the identity over $D$ and applying Stokes' formula. Here we use the fact that $\chi$ vanishes on $D_2-D$, in particular
on $\del D$.

To deal with the right hand side, we have
\beastar
&{}&|\int_{D}\chi d\chi\wedge(*de)|
=|\int_{D}\chi\langle d\chi, de\rangle \,dA|
\leq\int_{D}|\chi||\langle d\chi, de\rangle \,dA|\leq\int_{D}|\chi||d\chi||de|\,dA.
\eeastar
Notice also
\beastar
|de|&=&|d\langle dw, dw\rangle|=2|\langle \nabla(dw), dw\rangle|\leq 2|\nabla (dw)||dw|.
\eeastar
Hence
\beastar
|\int_{D}\chi d\chi\wedge(*de)|&\leq& \int_{D}2|\chi||d\chi||\nabla (dw)||dw|\,dA\\
&\leq&  \frac{1}{\epsilon}\int_{D}\chi^2|\nabla(dw)|^2\,dA+\epsilon\int_{D}|d\chi|^2|dw|^2\,dA\\
&\leq&  \frac{1}{\epsilon}\int_{D}\chi^2|\nabla(dw)|^2\,dA+\epsilon\|d\chi\|_{C^0(D)}^2\int_{D}|dw|^2\,dA
\eeastar
Then we can sum all the estimates above and get
\beastar
\int_{D}\chi^2|\nabla(dw)|^2
&\leq& \int_D\frac{2\chi^2}{\epsilon}|\nabla(dw)|^2\\
&{}&+\left(4\|K\|_{L^\infty(\dot\Sigma)}+2\|d\chi\|_{C^0(D)}\epsilon\right)\int_{D_2}|dw|^2\\
&{}&+C_1\int_{D_2}|dw|^4.
\eeastar
We take $\epsilon=4$. Then
\beastar
&{}&\int_{D_1}|\nabla(dw)|^2\leq\int_{D}\chi^2|\nabla(dw)|^2\nonumber\\
&\leq&\left(8\|K\|_{L^\infty(\dot\Sigma)}+16\|d\chi\|^2_{C^0(D)}\right)\int_{D_2}|dw|^2
+2C_1\int_{D_2}|dw|^4.\nonumber\\
\eeastar
By setting $C_1(D_1,D_2) = 8\|K\|_{L^\infty(\dot\Sigma)}+16\|d\chi\|^2_{C^0(D)}$ and $C_2(D_1,D_2) = 2C_1$
with $C_1$ the constant given in \eqref{eq:higher-derivative},
we have finished the proof.
\end{proof}

\bigskip

\textbf{Acknowledgements:} For their valuable feedback, we thank the audiences of our talks on
this topic in the seminars of various institutions.
Rui Wang sincerely thanks Bohui Chen for
numerous mathematical discussions as well as for his continuous encouragement.
Both authors greatly thank Gabriel C. Drummond-Cole for his help in improving the English expression of this article.
We also thank the anonymous referee for her/his careful reading of the paper and pointing out errors and many typos
which much improves the presentation of this paper.

\end{document}